\newcommand{\Diff}{\operatorname{Diff}}
\newcommand{\Aut}{\operatorname{Aut}}
\newcommand{\lie}[1]{\mathfrak{#1}}
\newcommand{\N}{\mathbb{N}}
\newcommand{\Z}{\mathbb{Z}}
\newcommand{\R}{\mathbb{R}}
\newcommand{\U}{\mathrm{U}}
\newcommand{\cG}{\mathcal{G}}
\newcommand{\cP}{\mathcal{P}}
\newcommand{\cM}{\mathcal{M}}
\newcommand{\X}{\mathfrak{X}}
\newcommand{\ev}{\mathrm{ev}}
\newcommand{\ex}{\mathrm{ex}}
\newcommand{\ol}[1]{\overline{#1}}
\newcommand{\be}{\begin{equation}}
\newcommand{\ee}{\end{equation}}
\newcommand{\curv}{\operatorname{curv}}
\newcommand{\Flux}{\operatorname{Flux}}
\newcommand{\flux}{\operatorname{flux}}
\newcommand{\ham}{\operatorname{ham}}
\newcommand{\Hom}{\operatorname{Hom}}
\newcommand{\one}{\mathbf{1}}
\newcommand{\grb}{\mathcal{G}rb}
\newcommand{\tgress}{\mathscr{T}}
\newcommand{\fusb}{\mathcal{F}us\mathcal{B}un}
\newcommand{\mc}[1]{\mathcal{#1}}
\newcommand{\ph}{\varphi}
\newcommand{\Mor}{\operatorname{Mor}}
\newcommand{\1}{\mathds{1}}
\newcommand{\pr}{\operatorname{pr}}
\newcommand{\fg}{\mathfrak{g}}
\newcommand{\fn}{\mathfrak{n}}
\newcommand{\fz}{\mathfrak{z}}
\newcommand{\dR}{\mathrm{dR}}
\newcommand{\Ham}{\mathrm{Ham}}
\newcommand{\action}{\rho} 
\newcommand{\comp}[1]{\langle #1 \rangle} 
\newcommand{\Ab}{Z}
\newcommand{\ab}{\mathfrak{z}}
\definecolor{darkolivegreen}{rgb}{0.33, 0.42, 0.18} 
\definecolor{cobalt}{rgb}{0.0, 0.24, 0.43}
\begin{document}

\theoremstyle{definition}
\newtheorem{definition}{Definition}[section]
\newtheorem{remark}[definition]{Remark}
\newtheorem{example}[definition]{Example}
\newtheorem{article}[definition]{}

\theoremstyle{plain}
\newtheorem{theorem}[definition]{Theorem}
\newtheorem*{theorem*}{Theorem}
\newtheorem{proposition}[definition]{Proposition}
\newtheorem{lemma}[definition]{Lemma}
\newtheorem{corollary}[definition]{Corollary}
\newtheorem{conjecture}[definition]{Conjecture}
\newtheorem{problem}{Problem}

\title{How an action that stabilizes a bundle gerbe gives rise to a Lie group extension}
\author{Bas Janssens\footnote{Institute of Applied Mathematics, Delft University of Technology, Delft, The Netherlands}, Peter Kristel\footnote{Hausdorff Center, University of Bonn, Bonn, Germany}}

\maketitle
\date

\begin{abstract}
Let $\mathcal{G}$ be a bundle gerbe with connection on a smooth manifold $M$, and let $\rho: G \rightarrow \operatorname{Diff}(M)$ be a smooth action of a Fr\'echet--Lie group $G$ on $M$ that preserves the isomorphism class of $\mathcal{G}$.
In this setting, we obtain an abelian extension of $G$ that consists of pairs $(g,A)$, where $g \in G$, and $A$ is an isomorphism from $\rho_{g}^{*}\mathcal{G}$ to $\mathcal{G}$.
We equip this group with a natural structure of abelian Fr\'{e}chet--Lie group extension of $G$, under the assumption that the first integral homology of $M$ is finitely generated.
As an application, we construct the universal central extension (in the category of Fr\'echet--Lie groups) of the group of Hamiltonian diffeomorphisms of a symplectic surface.
As an intermediate step, we obtain a central extension of the group of exact volume-preserving diffeomorphisms of a 3-manifold whose corresponding Lie algebra extension is conjectured to be universal.
\end{abstract}


\tableofcontents


\section{Introduction}\label{sec:introduction}

Let $M$ be a connected smooth manifold. A bundle gerbe with connection is a geometric object on $M$, introduced by Murray \cite{Mu96}, which should be seen as a categorified version of a line bundle with connection.
This analogy runs deep -- many constructions using line bundles have bundle gerbe equivalents.
The present paper is concerned with one such construction, namely the higher Kostant--Kirillov--Souriau (KKS) extension.
We first recall the line bundle version of this extension.

\paragraph{The KKS extension.}
Let $\Ab$ be an abelian Lie group, and let $P$ be a principal $\Ab$-bundle with connection over $M$. 
Let $\rho:G \rightarrow \Diff(M)$ be a smooth action of a Fr\'echet--Lie group $G$ that preserves the isomorphism class of $P$.
For every $g\in G$, there then exists an isomorphism $A \colon \rho_{g}^{*}P \xrightarrow{\sim} P$.
Since $A$ is determined only up to a choice of vertical automorphism, the group 
\[\widehat{G}_{P} := \{(g, A) \,|\, A \colon \rho_{g}^{*}P \xrightarrow{\sim} P\} \]
of isomorphisms that cover the $G$-action 
is a central extension 
of $G$ by the abelian Lie group $H^0(M,\Ab) \simeq \Ab$,
\be
H^0(M,\Ab) \rightarrow \widehat{G}_{P} \rightarrow G.
\ee
 This \emph{KKS-extension} is again a Fr\'echet--Lie group \cite{NV03, DJNV21},
and the corresponding Lie algebra extension 
\be
H_{\mathrm{dR}}^0(M,\R) \rightarrow \widehat{\fg}_{\omega} \rightarrow \fg
\ee
of $\fg$ by $H_{\mathrm{dR}}^0(M,\R) \simeq \R$
depends only on the curvature $\omega \in \Omega^2(M,\ab)$ of $P$.

%
%


\paragraph{KKS extensions for bundle gerbes.}
In the present paper, the main objects of study are \emph{higher KKS-extensions} arising from bundle gerbes with connection. 
 Let $\mc{G}$ be a $\Ab$-bundle gerbe with connection on $M$, and let
 $\rho:G \rightarrow \Diff(M)$ be a smooth action of a Fr\'echet--Lie group on $M$ that preserves the isomorphism class of the bundle gerbe with connection.
 Then, for every $g\in G$, there exists a $1$-isomorphism $A \colon \rho_{g}^{*}\cG \xrightarrow{\sim} \cG$ in the $2$-category of bundle gerbes with connection \cite{Wa07}. 
 Since the equivalence class $\overline{A}$ of $A$ modulo $2$-morphisms is only determined up to the Picard group $H^1(M,\Ab)$, the group 
\be
\widehat{G}_{\mc{G}} := \{(g, \overline{A})\,|\, A \colon \rho_{g}^{*}\mc{G} \xrightarrow{\sim} \mc{G}\}
\ee
of equivalence classes $\overline{A}$ of $1$-isomorphisms that cover the $G$-action is an abelian extension of $G$ by $H^1(M,\Ab)$,
 \be\label{eq:H1extIntro}
H^1(M,\Ab) \rightarrow \widehat{G}_{\mc{G}} \rightarrow G.
\ee
The action of $\widehat{G}_{\mc{G}}$ on $H^1(M,\Ab)$ by conjugation coincides with the pullback along the $G$-action, so, in particular, the extension~\eqref{eq:H1extIntro} is central if $G$ is connected.
We give a detailed construction of this extension in Section~\ref{sec:GroupOfIso}.
\hypertarget{target:A}{In Section~\ref{sec:smoothstructure} we prove (constructively) the following result:}
\begin{theorem*}[A]\label{thm:A}
	If $M$ is an orientable, finite-dimensional manifold for which $H_1(M,\Z)$ is finitely generated, 
	then \eqref{eq:H1extIntro} is an abelian extension of Fr\'{e}chet--Lie groups. 
\end{theorem*}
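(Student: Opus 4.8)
The plan is to deduce Theorem A from the standard reconstruction principle for abelian extensions of (possibly infinite-dimensional) Lie groups, which is the same mechanism used in the line-bundle case \cite{NV03,DJNV21}: if $A$ is an abelian Lie group equipped with a smooth $G$-module structure, and if the extension admits a section $\sigma\colon U\to\widehat G_{\mc G}$ over an open identity neighbourhood $U\subseteq G$ whose associated (twisted) group $2$-cocycle $\omega(g,h)=\sigma(g)\sigma(h)\sigma(gh)^{-1}\in A$ is smooth near $(e,e)$, then $\widehat G_{\mc G}$ carries a unique Lie group structure for which $\sigma$ is a smooth local section and $A\to\widehat G_{\mc G}\to G$ is an extension of Lie groups, modelled on $\fg\oplus\operatorname{Lie}(A)$. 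It therefore suffices to produce three ingredients: a Lie group structure on the fibre $A:=H^1(M,\Ab)$, smoothness of the $G$-action on $A$, and a smooth local section with smooth cocycle.

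The first two ingredients are where the topological hypotheses enter cheaply. Since $M$ is connected, the universal coefficient theorem gives $H^1(M,\Ab)\cong\Hom(H_1(M,\Z),\Ab)$, and finite generation of $H_1(M,\Z)\cong\Z^{b_1}\oplus T$ (with $T$ finite) yields $A\cong\Ab^{b_1}\times\Hom(T,\Ab)$, a finite-dimensional abelian Lie group with Lie algebra $H^1_{\dR}(M,\ab)\cong\ab^{b_1}$. This is the degree-shifted analogue of the centre $H^0(M,\Ab)\simeq\Ab$ with Lie algebra $H^0_{\dR}(M,\R)$ appearing for a line bundle. The $G$-action on $A$ is by pullback along $\rho_g$; as homotopic diffeomorphisms act identically on $H_1(M,\Z)$ and $g\mapsto\rho_g$ is continuous, this action is locally constant, hence a smooth $G$-module structure factoring through $\pi_0(G)$. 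In particular the extension is central when $G$ is connected, consistent with the remark following \eqref{eq:H1extIntro}.

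The heart of the proof, and the step I expect to be the genuine obstacle, is the construction of the smooth local section, where the connection on $\mc G$ and the orientability of $M$ are used. For $g$ in a small identity neighbourhood I would choose, smoothly in $g$, an isotopy $\Phi^g\colon[0,1]\times M\to M$ from $\operatorname{id}_M$ to $\rho_g$ (for instance $\Phi^g_t=\rho_{\exp(t\xi)}$ for $g=\exp\xi$), generated by a time-dependent vector field $X^g_t$ depending smoothly on $g$. Because $\rho_g$ preserves the isomorphism class of $\mc G$ with its connection, the curvature $3$-form $H$ is $G$-invariant, so $L_{X^g_t}H=0$ and $\iota_{X^g_t}H$ is closed; gerbe parallel transport along $\Phi^g$ integrates these infinitesimal data to a $1$-isomorphism $A_g\colon\rho_g^{*}\mc G\xrightarrow{\ \sim\ }\mc G$, and I set $\sigma(g):=(g,\ol{A_g})$. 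The two technical points are (i) that this transport can be performed in smooth families, so that $\ol{A_g}$ varies smoothly in the Lie group structure of $A$, and (ii) that it is, modulo a smoothly varying element of $A$, independent of the auxiliary isotopy. Point (i) is the real difficulty: it requires a smooth-family refinement of the integration of infinitesimal gerbe automorphisms, together with the reduction of the residual ambiguity in $\ol{A_g}$ to the finitely many periods dual to a fixed finite generating set of $H_1(M,\Z)$ — this is precisely where finite generation and orientability do the work, the latter entering when one integrates the transport $2$-form against a finite family of representing cycles.

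With such a section in hand, the cocycle $\omega(g,h)$ measures the failure of $A_g\circ\rho_g^{*}A_h$ to agree with $A_{gh}$; as both are $1$-isomorphisms $\rho_{gh}^{*}\mc G\to\mc G$ and composition of the transport isomorphisms is itself smooth in families, their difference is a smoothly varying flat $\Ab$-bundle, so $\omega\colon U\times U\to A$ is smooth near $(e,e)$. Independence of all choices follows by checking that a change of isotopy or of connection alters $\sigma$ by a smooth $A$-valued coboundary, so that the Lie group structure obtained is well defined. Feeding the data $(A,\ \text{module structure},\ \sigma,\ \omega)$ into the reconstruction principle above then upgrades \eqref{eq:H1extIntro} to an abelian extension of Fr\'echet--Lie groups; a final verification that the subspace topology induced on $A\subseteq\widehat G_{\mc G}$ coincides with its intrinsic Lie group topology completes the argument.
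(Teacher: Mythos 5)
Your overall skeleton---reduce Theorem~A to Neeb's reconstruction principle by exhibiting (i) a Lie group structure on the fibre $H^1(M,\Ab)\cong\Hom(H_1(M,\Z),\Ab)$, (ii) smoothness of the $G$-action on it, and (iii) a local section with locally smooth cocycle---is a legitimate framing, and your handling of (i) and (ii) is correct. But the proof has a genuine gap at exactly the point you yourself flag as ``the real difficulty'': ingredient (iii) is asserted, not constructed, and it is not a technical refinement of known results---it \emph{is} the theorem. There is no off-the-shelf ``smooth-family integration of infinitesimal gerbe automorphisms'' that produces, for $g$ in an identity neighbourhood of an arbitrary Fr\'echet--Lie group, a $1$-isomorphism $A_g\colon\rho_g^{*}\mc{G}\to\mc{G}$ whose class $\ol{A_g}$ varies smoothly and whose ambiguity can be normalized away; your candidate isotopy $\Phi^g_t=\rho_{\exp(t\xi)}$ already stumbles on the fact that for Fr\'echet--Lie groups such as subgroups of $\Diff(M)$ the exponential map is not locally surjective (fixable by using a chart path, but symptomatic of the missing care). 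More seriously, the ambiguity of any such transport is a torsor over $H^1(M,\Ab)$, and making a coherent smooth choice requires normalizing the resulting $\Ab$-valued discrepancies simultaneously over all conjugacy classes of $\pi_1(M)$: one must kill them on commutators, and on classes whose image in $H_1(M,\Z)$ has finite order $k$ one must extract smooth $k$-th roots near the identity to force the values into the torsion subgroup $\Ab_k$. Nothing in your sketch addresses this, and it is precisely the content of the paper's core technical result (Proposition~\ref{prop:CoreTechnicalResult}, steps 1--4); orientability and finite generation do not enter where you guess they do (``integrating the transport $2$-form against representing cycles'') but rather in making $H_1(M,\Z)$-indexed data finite and the evaluation map $\Hom(H_1(M,\Z),\Ab)\to\prod_i\Gamma_{\comp{\gamma_i}}$ an isomorphism.

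For comparison, the paper sidesteps family-wise gerbe transport entirely: it transgresses $\mc{G}$ to a principal $\Ab$-bundle with connection and fusion product over $LM$ (Proposition~\ref{prop:GroupTransgression} identifies $\widehat{G}_{\mc{G}}$ with the fusion- and connection-preserving automorphisms covering the $G$-action), applies the Neeb--Vizman/KKS construction componentwise to get Lie group extensions $N^{\sharp}_{\comp{\gamma}}$ of the subgroup $N$ acting trivially on $\pi_0(LM)$ (Proposition~\ref{Prop:vasteklasse}), and then uses the fusion structure to prove $\widehat{N}_{\comp{\gamma}}\subseteq N^{\sharp}_{\comp{\gamma}}$ is embedded; finite generation of $H_1(M,\Z)$ enters by taking a finite fibre product over a generating set, and the structure is finally transported from $\widehat{N}$ to $\widehat{G}_{\mc{G}}$ by smoothness of the conjugation action. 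If you want to salvage your more direct route, you would need to actually build the smooth-family parallel transport and reprove the normalization lemmas that the fusion product provides for free in the paper's argument; as written, your proposal reduces the theorem to an unproved statement of essentially equal strength.
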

Moreover, we prove that this structure is natural, in the sense that 1) it does not depend on any auxiliary choices; and 2) it is functorial (see Prop.~\ref{prop:smoothfunctor}).
The corresponding central extension of Fr\'echet--Lie algebras
\begin{equation}\label{eq:AlgebraExtension}
	H^1_{\dR}(M,\ab) \rightarrow \widehat{\fg}_{\omega} \rightarrow \fg 
\end{equation} 
depends only on the curvature $\omega \in \Omega^3(M)$ of $\mc{G}$, and is described explicitly in Section~\ref{sec:smoothstructure}, see also \cite{Ro95, Ne05, DJNV21}.

A central tool that we use to prove Theorem~\hyperref[thm:A]{A} is \emph{transgression to the loop space}, see Section~\ref{subsec:fusion} and~\cite{B93, BM94, Wa16}.
In brief, by transgression, the bundle gerbe $\mc{G}$ gives rise to a principal $\Ab$-bundle with connection over the loop space $LM$ of $M$.
This principal bundle moreover comes equipped with a fusion structure.
Using the functoriality of transgression, we can interpret $\widehat{G}_{\cG}$ as the group of automorphisms of the transgressed principal bundle that cover the natural action of $G$ on $LM$, and which preserve the connection as well as the fusion product, see Proposition \ref{prop:GroupTransgression}.
This allows us to equip $\widehat{G}_{\mc{G}}$ with the structure of Fr\'{e}chet-Lie group.

\paragraph{Universal central extension of the Hamiltonian diffeomorphism group of a surface.}

As an application of Theorem \hyperref[thm:A]{A}, we construct the universal central extension of the group $\Ham(\Sigma)$ of Hamiltonian diffeomorphisms of a closed, connected, symplectic 2-manifold $\Sigma$.

First, in Section~\ref{sec:volumePreserving}, we construct a central extension (in the category of Fr\'{e}chet-Lie groups) of the group $\Diff_{\ex}(M,\mu)$ of exact volume-preserving diffeomorphisms of a closed, orientable 3-manifold $M$.
This is done by applying Theorem \hyperref[thm:A]{A} to a bundle gerbe whose curvature is equal to $\mu$ (up to multiplication by a real constant).
This extension integrates a well-studied central extension of Lie algebras, which is conjectured to be universal \cite{Ro95}.
If this is indeed the case, then Neeb's Recognition Theorem \cite{Neeb2002UCE} provides a condition on the fundamental group of $\Diff_{\ex}(M,\mu)$ that guarantees that (a simply connected cover of) the Lie group extension we constructed is universal.
	
	In Sections~\ref{sec:quant} and~\ref{sec:UniversalHamilton}, we combine this with ideas from \cite{JV18} in order to construct the universal central extension of $\Ham(\Sigma)$.
	We rescale the symplectic form on $\Sigma$ to be prequantizable, and apply Theorem~\hyperref[thm:A]{A}
	to the group $\Aut_{0}(P,\theta)$ of quantomorphisms of the prequantum $\U(1)$-bundle $(P,\theta)$ over~$\Sigma$.
	Using the functoriality of our construction, we exhibit a 1-dimensional embedded Lie subgroup of the central extension 
	$\widehat{\Aut}_{0}(P,\theta)_{\mc{G}}$ that covers the group $\U(1) \subseteq \Aut_{0}(P,\theta)$ of vertical 
	automorphisms.
	The quotient of $\widehat{\Aut}_{0}(P,\theta)_{\mc{G}}$ by this $1$-dimensional embedded subgroup
	yields a central extension of $\Ham(\Sigma)$ whose Lie algebra is universal.
	Using classical results by Smale~\cite{Smale1959}, Earle--Eells~\cite{EarleEells1969} and Gramain~\cite{Gramain1973} on the homotopy type of $\Diff(\Sigma)$, we are able to verify that the condition of Neeb's Recognition Theorem is satisfied for the simply connected cover of the group we constructed.
	This proves that it is indeed the (unique) universal central extension of $\Ham(\Sigma)$ in the category of Fr\'echet--Lie groups.

\paragraph{Higher bundle gerbes.}
KKS extensions for higher bundle gerbes were studied 
at the infinitesimal level (as $L_{\infty}$-algebras) in \cite{FRS14}, and at the global level (as smooth $\infty$-groups) in \cite{BS23}, cf. also \cite{BMS21}
for the case of $1$-bundle gerbes without connection.
Our approach is different in the sense that we discard higher coherences in order to obtain a $1$-group, and we endow this smaller object with the (much finer) structure of a Fr\'{e}chet--Lie group.

This is similar in spirit to the approach in \cite{DJNV21}, where
transgression of differential characters (of arbitrary degree) is used to obtain central extensions of $G$ by $\mathrm{U}(1)$ (which plays the role of $\Ab$), cf.\ also \cite{Ismagilov1996, HV04, DJNV20}.
The main difference with this approach is that we take into account the \emph{fusion structure} on the transgressed bundle.
This extra feature allows us to drop the condition in \cite{DJNV21} that every map $\action_{g} \colon M \rightarrow M$ should be homotopic to the identity. 
Moreover, it enables us to find a single extension by $H^1(M,\Ab)$, as opposed to $\mathrm{Rk}(H_1(M,\Z))$ many extensions by~$\Ab$.
The prominence of the fusion structure in our construction also explains why it only works for $1$-gerbes.

\paragraph{Acknowledgements}  
B.J.~is supported by the NWO grant 639.032.734
`Cohomology and representation theory of infinite dimensional Lie groups'.
P.K.~is supported by a postdoctoral fellowship from the Hausdorff Center for Mathematics.

\section{Preliminaries: Bundle gerbes and transgression}

Bundle gerbes were introduced in \cite{Mu96} as geometric realizations of classes in $H^{3}(M,\Z)$.
In this sense, they are higher analogues of line bundles, which realize classes in $H^{2}(M,\Z)$.
Given an $n$-form $\omega$ on $M$, one obtains an $n-1$ form on $LM$ by integrating $\mathrm{ev}^{*}\omega \in \Omega^{n}(S^{1} \times LM)$ along $S^{1}$, where $\mathrm{ev}: S^{1} \times LM \rightarrow M$ is the evaluation map.
This induces a map $H^{n}(M,\Z) \rightarrow H^{n-1}(LM,\Z)$, which is called \emph{transgression}.
It turns out that in the case $n=3$, this operation lifts to the geometric realizations, i.e.~one can transgress a bundle gerbe on $M$ to obtain a line bundle on $LM$.
In this section, we briefly recall the definition of the strict monoidal 2-groupoid of bundle gerbes with connection from \cite{Wa07}, and the associated transgression-regression machinery as set up in \cite{Wa16}.

\subsection{The strict monoidal 2-groupoid of bundle gerbes with connection}\label{sec:2catBgb}


Let $M$ be a finite-dimensional manifold, and let $\Ab$ be a finite-dimensional abelian Lie group with Lie algebra $\ab$.
Even though the base space $M$ is a manifold, we must work with \emph{diffeological} bundle gerbes in order to properly work with transgression.
This does no harm, because the 2-category of diffeological bundle gerbes with connection over $M$ is equivalent to the 2-category of ordinary bundle gerbes with connection, \cite[Theorem 3.1.7]{Wa16}.

Given a subduction $\pi: Y \rightarrow M$ , we write $Y^{[k]}$ for the $k$-fold fibre product of $Y$ over $M$.
We write $\pr_{23...k}: Y^{[k]} \rightarrow Y^{[k-1]}$ for the projection map which forgets the first component, and similarly for $\pr_{13...k}$ and so on.

\begin{definition}
 A \emph{bundle gerbe} on $M$ consists of the following data:\vspace{-.4em}
\begin{itemize}\itemsep-.2em
	\item A subduction $\pi: Y \rightarrow M$,
	\item A principal $\Ab$-bundle $P \rightarrow Y^{[2]}$.
	\item An isomorphism $\lambda:\pr_{12}^{*}P \otimes \pr_{23}^{*}P \rightarrow \pr_{13}^{*}P$ of principal $\Ab$-bundles over $Y^{[3]}$.
\end{itemize}
The isomorphism $\lambda$ is required to be associative over $Y^{[4]}$ as explained in, e.g., \cite[Def.~2.1.1]{Wa16}.
\end{definition}

\begin{definition}\label{def:ConnectionAndCurvature}
A \emph{connection} on a bundle gerbe $\mc{G} = (Y, P, \lambda)$ consists of:
\begin{itemize}\itemsep-.2em
 \item A principal connection on $P$, compatible with $\lambda$.
 \item A 2-form $B \in \Omega^{2}(Y, \ab)$ such that $\mathrm{curv}(P) = \pr_{2}^{*}B - \pr_{1}^{*}B$.
\end{itemize}
	The 2-form $B$ is called the \emph{curving} of the connection.
	There exists a unique 3-form $\omega \in \Omega^{3}(M,\ab)$, which satisfies $dB = \pi^{*}\omega$.
	This 3-form is called the \emph{curvature} of the connection.
	If $\omega = 0$, then the bundle gerbe is called \emph{flat}.
\end{definition}
Every bundle gerbe with connection $\mc{G} = (Y,P,\lambda,B)$ admits a \emph{dual} $\mc{G}^{*} := (Y,P^{*},(\lambda^{*})^{-1},-B)$, see \cite{Wa07}.
\begin{definition}\label{def:MorphismsOfBundleGerbes}
Given two bundle gerbes with connection, $\mc{G}_{1} = (Y_{1},P_{1},\lambda_{1},B_{1})$ and $\mc{G}_{2}=(Y_{2},P_{2},\lambda_{2},B_{2})$, an 
\emph{invertible 1-morphism} $A$ from $\mc{G}_{1}$ to $\mc{G}_{2}$ consists of the following data:
\begin{itemize}\itemsep-.2em
	\item A subduction $\zeta: \mc{Y} \rightarrow Y_{1} \times_{M} Y_{2}$,
	\item A principal $\Ab$-bundle $A \rightarrow \mc{Y}$ with connection,
	\item An isomorphism $\alpha: P_{1} \otimes \zeta_{2}^{*}A \rightarrow \zeta_{1}^{*}A \otimes P_{2}$ of principal $\Ab$-bundles.
\end{itemize}
The connection $A$ must be compatible with the curvings $B_{1}$ and $B_{2}$ in the sense that
\begin{equation}\label{eq:CurvatureAndCurvings}
	\curv(A) = \pr_{2}^{*}B_{2} - \pr_{1}^{*}B_{1} \in \Omega^{2}(\mc{Y},\ab),
\end{equation}
where $\mathrm{pr}_{i}$ is the obvious projection from $\mc{Y}$ to $Y_{i}$.
The isomorphism $\alpha$ is required to be compatible with the isomorphisms $\lambda_{1}$ and $\lambda_{2}$ in the sense of \cite[Def.~2, 1M2]{Wa07}.
\end{definition}

There are further notions of tensor products and of 2-morphisms, this leads to a monoidal 2-groupoid called the \emph{2-groupoid of bundle gerbes with connection}, denoted $\grb(M)$, see \cite[Lemma 3.1.6]{Wa16} and \cite[Section 2]{Wa07}.
For brevity, we will take ``bundle gerbe'' to mean ``bundle gerbe with connection''.
In this article, our interest will be in the 1-truncation of this 2-category denoted by $h_{1}\grb(M)$.
The objects of $h_{1}\grb(M)$ are those of $\grb(M)$, and its morphisms are 2-isomorphism classes of 1-morphisms in $\grb(M)$.

To every smooth map $f: M \rightarrow N$ one can associate a ``pullback'' operation, $f^{*}: \grb(N) \rightarrow \grb(M)$.
The operation $f^{*}$ is a strict (monoidal) 2-functor, \cite[Section 1.4]{Wa07}.
It follows that $f^{*}$ induces a functor $f^{*}: h_{1}\grb(N) \rightarrow h_{1}\grb(M)$.
One particular consequence of functoriality is that, if $A: \mc{G} \rightarrow \mc{H}$ and $B: \mc{H} \rightarrow \mc{K}$ are morphisms between objects of $h_{1}\grb(N)$, then
\begin{equation}\label{eq:PullbackFunctor}
	f^{*}(B \circ A) = f^{*}(B) \circ f^{*}(A).
\end{equation}
Moreover, if $\1_{\mc{G}}$ is the identity morphism on $\mc{G}$, then
\begin{equation}\label{eq:PullbackIdentity}
	f^{*}\1_{\mc{G}} = \1_{f^{*}\mc{G}}.
\end{equation}

\subsection{Holonomy}
There is a notion of holonomy for bundle gerbes with connection (see, e.g.~\cite{Wa16}).
Let $\mc{G} = (Y,P, \lambda,B)$ be a bundle gerbe with connection.
\begin{definition}
	Let $\Sigma$ be a closed, oriented surface, and let $\sigma: \Sigma \rightarrow M$ be a smooth map.
	The bundle gerbe $\sigma^{*}\mc{G}$ defines a class $[\sigma^{*}\mc{G}] \in H^{2}(\Sigma,\Ab)$.
	The \emph{holonomy} of $\mc{G}$ around $\Sigma$ is the pairing
	\begin{equation*}
		\mathrm{hol}_{\mc{G}}(\Sigma,\sigma) := \langle [\sigma^{*}\mc{G}],[\Sigma] \rangle,
	\end{equation*}
	where $[\Sigma] \in H_{2}(\Sigma, \Z)$ is the fundamental class.
\end{definition}
For a smooth manifold $X$, we write $\mc{I}_{\rho}(X)$ for the trivial bundle gerbe with connection whose curving is $\rho \in \Omega^{2}(X,\ab)$.
Let $\Sigma$ be an oriented surface, and let $\sigma: \Sigma \rightarrow M$ be a smooth map.
Suppose that $\sigma^{*}\mc{G}$ is isomorphic to $\mc{I}_{\rho}(\Sigma)$.
We then have
\begin{equation*}
	\mathrm{hol}_{\mc{G}}(\Sigma,\sigma) = \exp \int_{\Sigma} \rho.
\end{equation*}
We should point out that if $\Ab$ is not connected, then $\sigma^{*}\mc{G}$ need not be trivializable.

\begin{lemma}\label{Lemma:HolonomyCurvature}
	Let $\phi: \R \rightarrow \Diff(M)$ be smooth curve.\footnote{This means that the map $\R \times M \rightarrow M, \;(t, x) \mapsto \phi_t(x)$ is smooth.}
	Let $\Sigma$ be a closed, oriented surface, and let ${\sigma: \Sigma \rightarrow M}$ be a smooth  map.
	Define $\sigma_t \colon \Sigma \rightarrow M$ by $\sigma_t := \phi_t \circ \sigma$, and define 
	$\Phi \colon \Sigma \times \R \rightarrow M$ by $\Phi(s,t) := \sigma_t(s)$.
	Let $\omega \in \Omega^{3}(M,\ab)$ be the curvature of $\mc{G}$.
	We then have
	\begin{equation}\label{eq:HolonomyVSCurvature}
		\mathrm{hol}_{\mc{G}}(\Sigma, \sigma_{T}) - \mathrm{hol}_{\mc{G}}(\Sigma, \sigma_{0}) = \exp \int_{\Sigma \times [0,T]} \Phi^{*}\omega,
	\end{equation}
	for all $T \in \R$.
\end{lemma}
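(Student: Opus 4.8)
The plan is to realize the two surfaces $(\Sigma,\sigma_0)$ and $(\Sigma,\sigma_T)$ as the two boundary components of the cylinder $W := \Sigma \times [0,T]$, and then to reduce the statement to Stokes' theorem after trivializing the gerbe on $W$. Since $\Sigma$ is closed, the compact oriented $3$-manifold $W$ has boundary $\partial W = (\Sigma \times \{T\}) - (\Sigma \times \{0\})$ (opposite induced orientations, and no contribution from the sides), and the map $\Phi \colon W \to M$ restricts along the inclusions $i_0, i_T \colon \Sigma \to W$ to $\sigma_0$ and $\sigma_T$ respectively. Pulling back, I obtain a bundle gerbe with connection $\Phi^{*}\mc{G}$ on $W$ whose curvature is $\Phi^{*}\omega$, by naturality of the curvature under pullback.

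The key step is to trivialize $\Phi^{*}\mc{G}$. Because $W$ deformation retracts onto the $2$-dimensional manifold $\Sigma \times \{0\}$, its degree-$3$ integral cohomology vanishes, so the Dixmier--Douady class of $\Phi^{*}\mc{G}$ is trivial and the gerbe is topologically trivializable. Transporting the connection through such a trivialization then exhibits an isomorphism $\Phi^{*}\mc{G} \cong \mc{I}_{\rho}(W)$ of bundle gerbes with connection for some $\rho \in \Omega^{2}(W,\ab)$, and comparing the curvatures of the two sides forces $d\rho = \Phi^{*}\omega$.

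Restricting this trivialization to the two boundary components, and using the functoriality of pullback, yields isomorphisms $\sigma_{T}^{*}\mc{G} \cong \mc{I}_{i_{T}^{*}\rho}(\Sigma)$ and $\sigma_{0}^{*}\mc{G} \cong \mc{I}_{i_{0}^{*}\rho}(\Sigma)$. The holonomy formula for trivial gerbes recorded above then gives $\mathrm{hol}_{\mc{G}}(\Sigma,\sigma_{T}) = \exp \int_{\Sigma} i_{T}^{*}\rho$ and $\mathrm{hol}_{\mc{G}}(\Sigma,\sigma_{0}) = \exp \int_{\Sigma} i_{0}^{*}\rho$. Stokes' theorem on $W$ finally gives
\begin{equation*}
\int_{\Sigma \times [0,T]} \Phi^{*}\omega = \int_{W} d\rho = \int_{\partial W} \rho = \int_{\Sigma} i_{T}^{*}\rho - \int_{\Sigma} i_{0}^{*}\rho ,
\end{equation*}
and exponentiating and combining with the previous two identities yields \eqref{eq:HolonomyVSCurvature}, where the difference of holonomies is read in the group $\Ab$ and the right-hand side lies in its identity component.

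I expect the trivialization of the second step to be the main obstacle, both because it requires the cohomological vanishing above and because of the subtlety, flagged in the remark preceding the lemma, that pullbacks of $\mc{G}$ to surfaces need not be trivializable when $\Ab$ is disconnected. The resolution is that the obstruction to trivializability is a homotopy invariant of the classifying map: since $i_{0}$ and $i_{T}$ are homotopic as maps into $W$, the discrete parts of $\mathrm{hol}_{\mc{G}}(\Sigma,\sigma_{0})$ and $\mathrm{hol}_{\mc{G}}(\Sigma,\sigma_{T})$ agree and cancel in the difference, so that only the continuous part---governed by the curvature via the computation above---survives. Restricting attention to the identity component $\Ab_{0}$, in which $\exp$ takes values, makes this precise and reduces the general case to the connected one treated above.
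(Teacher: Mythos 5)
Your main line of argument has a genuine gap at exactly the point you flag: the claim that $H^3(W,\Z)=0$ forces $\Phi^{*}\mc{G}$ to be trivializable is false for general $\Ab$. Bundle gerbes (without connection) with structure group $\Ab$ are classified by $\check{H}^2(W,\underline{\Ab})$, not by $H^3(W,\Z)$; when $\Ab$ is disconnected this group contains (for $W \simeq \Sigma$, is in fact isomorphic to) $H^2(W,\pi_0(\Ab)) \cong H^2(\Sigma,\pi_0(\Ab))$, which need not vanish --- for instance for $\Ab = \Z$ or $\Ab = \U(1)\times\Z/2\Z$ it is $\Z$, respectively $\Z/2\Z$. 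So $\Phi^{*}\mc{G}$ need not admit any isomorphism to $\mc{I}_{\rho}(W)$, and your Stokes computation cannot be run on $\Phi^{*}\mc{G}$ itself. Your argument is complete only when $\Ab$ is connected (where the classifying group reduces to $H^3(W,\pi_1(\Ab_0))=0$), and in that case it coincides with the first step of the paper's proof.

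Your proposed patch --- ``the discrete parts cancel, restrict to $\Ab_0$'' --- has the right intuition: homotopy invariance of the topological class does show that the two holonomies have the same image in $\pi_0(\Ab)$, so their difference lies in $\Ab_0$. But as stated it is not a proof, and the reduction ``to the connected case'' cannot be performed: if the class of $\Phi^{*}\mc{G}$ in $H^2(W,\pi_0(\Ab))$ is nonzero, then $\Phi^{*}\mc{G}$ does not arise from any $\Ab_0$-gerbe, so there is nothing to restrict to. The paper's device for making the cancellation precise is to introduce the flat gerbe $\mc{G}_0 := p_{\Sigma}^{*}i_{0}^{*}\Phi^{*}\mc{G}$, prove (via the sheaf-cohomology identifications $\check{H}^2(\Sigma,\underline{\Ab}) \cong H^2(\Sigma,\pi_0(\Ab))$ and $\check{H}^2(\Sigma\times\R,\underline{\Ab}) \cong H^2(\Sigma\times\R,\pi_0(\Ab))$, together with homotopy invariance) that $\mc{G}_0$ is isomorphic to $\Phi^{*}\mc{G}$ as a gerbe without connection, and then run precisely your Stokes argument on the tensor product $\mc{G}' := \Phi^{*}\mc{G}\otimes\mc{G}_0^{*}$, which \emph{is} trivializable and still has curvature $\Phi^{*}\omega$ because $\mc{G}_0$ is flat; the holonomy contributions of $\mc{G}_0$ then drop out of the difference because they are independent of $t$. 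Without this construction (or an equivalent one), the disconnected case remains unproved.
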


Let $p_{\Sigma} \colon \Sigma \times \R \rightarrow \Sigma$ be the projection onto the first factor, and let $i_{t}: \Sigma \rightarrow \Sigma \times \R, s \mapsto (s,t)$ be inclusion at time $t$.
\begin{lemma}
	The bundle gerbe $p^{*}_{\Sigma}i_{0}^{*}\Phi^{*}\mc{G}$ is isomorphic to $\Phi^{*}\mc{G}$, as a bundle gerbe without connection.
\end{lemma}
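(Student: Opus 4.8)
The plan is to reduce the statement to the homotopy invariance of the isomorphism class of a bundle gerbe without connection. By functoriality of the pullback operation (\cite[Section~1.4]{Wa07}) I would first rewrite
\[
	p_{\Sigma}^{*} i_{0}^{*} \Phi^{*}\mc{G} = (i_{0} \circ p_{\Sigma})^{*} \Phi^{*}\mc{G}.
\]
In this form both $\Phi^{*}\mc{G}$ and $p_{\Sigma}^{*} i_{0}^{*}\Phi^{*}\mc{G}$ appear as pullbacks of the single bundle gerbe $\mc{H} := \Phi^{*}\mc{G}$ on $\Sigma \times \R$, along the self-maps $\mathrm{id}_{\Sigma \times \R}$ and $i_{0} \circ p_{\Sigma}$ respectively. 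Hence it suffices to produce an isomorphism $\mc{H} \cong (i_{0}\circ p_{\Sigma})^{*}\mc{H}$ of bundle gerbes without connection.

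Next I would exhibit a smooth homotopy between these two self-maps. The smooth map
\[
	G \colon (\Sigma \times \R) \times [0,1] \rightarrow \Sigma \times \R, \qquad G(s,t,u) := (s,(1-u)t),
\]
satisfies $G(\,\cdot\,,0) = \mathrm{id}_{\Sigma \times \R}$ and $G(\,\cdot\,,1) = i_{0} \circ p_{\Sigma}$, so it realises the deformation retraction of $\Sigma \times \R$ onto $\Sigma \times \{0\}$. Thus the two self-maps are smoothly homotopic, and the claim reduces to the assertion that pulling a fixed bundle gerbe back along smoothly homotopic maps yields isomorphic bundle gerbes without connection.

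For this last input I would pull $\mc{H}$ back along $G$ to obtain a bundle gerbe $G^{*}\mc{H}$ on $(\Sigma \times \R) \times [0,1]$ whose restrictions to the two boundary slices at $u=0$ and $u=1$ are precisely $\mc{H}$ and $(i_{0}\circ p_{\Sigma})^{*}\mc{H}$; concordance invariance of bundle gerbes then identifies these two restrictions as bundle gerbes without connection. I expect this concordance step to be the main obstacle. It must be applied for a general, possibly disconnected, abelian Lie group $\Ab$ rather than only for $\U(1)$, and it is essential that the isomorphism is demanded only at the level of bundle gerbes \emph{without} connection: the connection on $G^{*}\mc{H}$ furnishes the concordance isomorphism (via transport in the $[0,1]$-direction), but this will not preserve the curving once $\omega \neq 0$. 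That the whole argument stays within the diffeological framework of Section~\ref{sec:2catBgb} is unproblematic, since $\Sigma \times \R$ and its slices are finite-dimensional manifolds and the equivalence of~\cite[Theorem 3.1.7]{Wa16} applies. Alternatively, one may bypass concordance and argue through characteristic classes: the isomorphism class of a bundle gerbe without connection is a homotopy invariant of the base, and since $p_{\Sigma}$ and $i_{0}$ are mutually inverse homotopy equivalences, $\mc{H}$ and $p_{\Sigma}^{*}i_{0}^{*}\mc{H}$ carry the same class and are therefore isomorphic.
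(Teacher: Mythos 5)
Your reduction of the lemma to homotopy invariance is sound and in fact mirrors the paper's strategy (the paper likewise uses only that $i_{0}\circ p_{\Sigma}$ is homotopic to the identity). The genuine gap is that the crucial input — that smoothly homotopic maps pull a bundle gerbe without connection back to isomorphic bundle gerbes, equivalently concordance invariance — is exactly what the lemma amounts to, and you assert it rather than prove it. Both of your routes beg the question. ``Transport in the $[0,1]$-direction'' is not an off-the-shelf operation for bundle gerbes the way it is for line bundles; constructing such a transport isomorphism is itself a statement of the same kind as the lemma. And the ``characteristic class'' alternative presupposes that the classifying invariant is a homotopy functor. For a general finite-dimensional abelian Lie group $\Ab$ — in particular a \emph{disconnected} one, which is precisely the case the paper is careful about (note its remark that $\sigma^{*}\mc{G}$ need not be trivializable when $\Ab$ is disconnected) — bundle gerbes without connection are classified by the sheaf cohomology $\check{H}^{2}(N,\underline{\Ab})$ of the non-constant sheaf of smooth $\Ab$-valued functions, and homotopy invariance of this group is not formal: in the long exact sequence relating it to $H^{\bullet}(N,\pi_{1}(\Ab_{0}))$ and $H^{\bullet}(N,\pi_{0}(\Ab))$, two homotopic maps induce equal pullbacks on the outer singular-cohomology terms, but this does not force equality on the middle term; the potential discrepancy lives in $H^{3}(N,\pi_{1}(\Ab_{0}))$.

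The paper's proof is precisely a proof of this invariance for the spaces at hand. It classifies gerbes without connection by $\check{H}^{2}(N,\underline{\Ab})$, runs the two short exact sequences $\pi_{1}(\Ab_{0}) \rightarrow \mathrm{Lie}(\Ab) \rightarrow \Ab_{0}$ and $\Ab_{0} \rightarrow \Ab \rightarrow \pi_{0}(\Ab)$ through sheaf cohomology, and then uses that both $\Sigma$ and $\Sigma \times \R$ satisfy $H_{k}(N,\Z)=0$ for $k>2$ (together with freeness of $\pi_{1}(\Ab_{0})$) to conclude $H^{k}(N,\pi_{1}(\Ab_{0}))=0$ for $k>2$. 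This collapses the exact sequence to a \emph{natural} isomorphism $\check{H}^{2}(N,\underline{\Ab}) \cong H^{2}(N,\pi_{0}(\Ab))$, and only then does honest homotopy invariance of singular cohomology, applied to $i_{0}\circ p_{\Sigma} \simeq \mathrm{id}$, finish the argument. To complete your proposal you would need to supply this step (or an equivalent one, e.g.\ a classifying-space argument valid for disconnected $\Ab$); your shortcut is fine for $\Ab = \U(1)$, where gerbes are classified by the homotopy invariant $H^{3}(N,\Z)$, but not at the level of generality the paper requires.
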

We moreover remark that $p^{*}_{\Sigma}i_{0}^{*}\Phi^{*}\mc{G}$ is flat, since $i_{0}^{*}\Phi^{*}\mc{G}$ is flat (for dimensional reasons).

\begin{proof}
	Recall that bundle gerbes without connection on a manifold $N$ are classified by the sheaf cohomology in degree 2 of the sheaf of smooth functions with values in $\Ab$, denoted $\check{H}^2(N,\underline{\Ab})$.
	The short exact sequence of Lie groups
	\begin{equation*}
		\pi_1(\Ab_0) \rightarrow \mathrm{Lie}(\Ab) \rightarrow \Ab_0
	\end{equation*}
	gives rise to a corresponding short exact sequence of sheaves.
	Since the sheaf $\underline{\mathrm{Lie}(\Ab)}$ of smooth functions with values in $\mathrm{Lie}(\Ab)$ is acyclic, 
	one infers from the corresponding long exact sequence in sheaf cohomology that 
	\be \label{eq:SheafCohomologyConnected}\check{H}^k(N, \underline{\Ab_0}) \simeq H^{k+1}(N, \pi_1(\Ab_0)).\ee
	(The sheaf cohomology reduces to singular cohomology because $\pi_1(\Ab_0)$ is discrete.)
	In the same vein, the short exact sequence of Lie groups 
	\[
	\Ab_0 \rightarrow \Ab \rightarrow \pi_0(\Ab)
	\]
	gives rise to a long exact sequence in sheaf cohomology, which reduces to
	\be\label{eq:LESsheafcohomology}
	\ldots \rightarrow H^{k+1}(N, \pi_1(\Ab_0)) \rightarrow \check{H}^k(N, \underline{\Ab}) \rightarrow H^k(N, \pi_0(\Ab)) \rightarrow H^{k+2}(N, \pi_1(\Ab_0)) \rightarrow \ldots .
	\ee
	if one substitutes \eqref{eq:SheafCohomologyConnected}.
	
	Note that both $N = \Sigma$ and $N = \Sigma \times \R$ satisfy $H_{k}(N,\Z) = 0$ for $k >2$, so $H^k(N,\pi_1(\Ab_0)) = 0$ for $k >2$
	by the universal coefficient theorem and the fact that $\pi_1(\Ab_0)$ is a free $\Z$-module.
	It then follows from \eqref{eq:LESsheafcohomology} that 
	\begin{equation}\label{eq:CohomologyIsos}
		\check{H}^2(\Sigma, \Ab) \simeq H^2(\Sigma, \pi_0(\Ab))\qquad \text{ and } \qquad\check{H}^2(\Sigma\times \R, \Ab) \simeq H^2(\Sigma\times \R, \pi_0(\Ab)).
	\end{equation}
	Since $i_{0} p_{\Sigma}$ is homotopic to the identity (and $p_{\Sigma} i_{0}$ is the identity) we have that $p_{\Sigma}^{*}: H^{2}(\Sigma, \pi_{0}(\Ab)) \rightarrow H^{2}(\Sigma \times \R, \pi_{0}(\Ab))$ is an isomorphism, with inverse $i_{0}^{*}$.
	Because the isomorphisms in Equation \eqref{eq:CohomologyIsos} are compatible with pullback, we have that $p^{*}_{\Sigma}: \check{H}^{2}(\Sigma,\Ab) \rightarrow \check{H}^{2}(\Sigma \times \R, \Ab)$ is also an isomorphism, with inverse $i_{0}^{*}$.
	Thus, $p^{*}_{\Sigma} i_{0}^{*}\Phi^{*}\mc{G}$ is isomorphic to $\Phi^{*}\mc{G}$, as a bundle gerbe without connection.
\end{proof}

\begin{proof}[Proof of Lemma~\ref{Lemma:HolonomyCurvature}]
Let $T \in \R$.
First, we prove that if $\mc{G}' \rightarrow \Sigma \times \R$ is a trivializable bundle gerbe with curvature $\omega'$, then
\begin{equation*}
	\mathrm{hol}_{\mc{G}'}(\Sigma, i_{T}) - \mathrm{hol}_{\mc{G}'}(\Sigma, i_{0}) = \exp \int_{\Sigma \times [0,T]} \omega'.
\end{equation*}
Let $\pi':Y'\rightarrow \Sigma \times \R$ and $B' \in \Omega^{2}(Y', \ab)$ be the subduction and the curving of $\mc{G}'$.
Trivializability of $\mc{G}'$ means that there exists a 2-form $\rho \in \Omega^{2}(\Sigma \times \R, \ab)$ such that $\mc{G}'$ is isomorphic to $\mc{I}_{\rho}(\Sigma \times \R)$.
Since $i_{t}^{*}\mc{G}'$ is isomorphic to $\mc{I}_{i_{t}^{*}\rho}(\Sigma)$, this implies that $\mathrm{hol}_{\mc{G}'}(\Sigma, i_{t}) = \exp \int_{\Sigma \times \{t\}} \rho$.
Let $A \rightarrow Y'$ be a principal $\Ab$-bundle that is part of an isomorphism between $\mc{G}'$ and $\mc{I}_{\rho}(\Sigma \times \R)$, and let $F_{A} \in \Omega^{2}(Y', \ab)$ be its curvature.
By Definition \ref{def:MorphismsOfBundleGerbes}, in particular Equation \eqref{eq:CurvatureAndCurvings}, we have $F_{A} = (\pi')^{*}\rho - B'$.
Differentiating this equation, we obtain $dB' = (\pi')^{*}d \rho$, whence $d\rho = \omega'$.
We now compute
	\begin{align*}
		\mathrm{hol}_{\mc{G}'}(\Sigma, i_{T}) - \mathrm{hol}_{\mc{G}'}(\Sigma, i_{0}) &= \exp\left( \int_{\Sigma \times \{ T \} } \rho - \int_{\Sigma \times \{ 0 \} }\rho \right) \\
		&= \exp \left( \int_{\Sigma \times [0,T]} d \rho \right) \\
		&= \exp \left( \int_{\Sigma \times [0,T]} \omega' \right).
	\end{align*}

	We now consider again the original bundle gerbe $\mc{G} \rightarrow M$.
	We set $\mc{G}_{0} = p_{\Sigma}^{*}i_{0}^{*}\Phi^{*}\mc{G}$, which is flat and, as bundle gerbe without connection, isomorphic to $\Phi^{*}\mc{G}$ (by Lemma \ref{Lemma:HolonomyCurvature}).
	We claim that $\mathrm{hol}_{\mc{G}_{0}}(\Sigma,i_{t})$ does not depend on $t$, so that in particular
	\begin{equation*}
		\mathrm{hol}_{\mc{G}_{0}} (\Sigma, i_{T}) - \mathrm{hol}_{\mc{G}_{0}}(\Sigma, i_{0}) = 0.
	\end{equation*}
	First, because the curvature of $\mc{G}_{0}$ vanishes, it defines a class $[\mc{G}_{0}] \in H^{2}(\Sigma \times \R,\Ab)$.
	It follows that $\mathrm{hol}_{\mc{G}_{0}}(\Sigma, i_{t}) = \langle [i_{t}^{*}\mc{G}_{0}],[\Sigma] \rangle = \langle i_{t}^{*}[\mc{G}_{0}],[\Sigma] \rangle$, which does not depend on $t$. 
	Indeed, since $i_{t}$ is homotopic to $i_{t'}$ for all $t,t' \in \R$, we have $i_{t}^{*} = i_{t'}^{*}$ as maps from $H^{2}(\Sigma \times \R,\Ab)$ to $H^{2}(\Sigma, \Ab)$.

	The bundle gerbe $\mc{G}' = \Phi^{*}\mc{G} \otimes \mc{G}_{0}^{*}$ is trivializable, and has curvature $\Phi^{*}\omega$.
	It thus follows that
	\begin{equation*}
		\mathrm{hol}_{\mc{G}'}(\Sigma, i_{T}) - \mathrm{hol}_{\mc{G}'}(\Sigma,i_{0}) = \exp \int_{\Sigma \times [0,T]} \Phi^{*}\omega.
	\end{equation*}
	On the other hand, we have
	\begin{align*}
		\mathrm{hol}_{\mc{G}'}(\Sigma, i_{T}) - \mathrm{hol}_{\mc{G}'}(\Sigma, i_{0}) &= \mathrm{hol}_{\Phi^{*}\mc{G} \otimes \mc{G}_{0}^{*}}(\Sigma, i_{T}) - \mathrm{hol}_{\Phi^{*}\mc{G} \otimes \mc{G}_{0}^{*}}(\Sigma, i_{0}) \\
		&= \mathrm{hol}_{\Phi^{*}\mc{G}}(\Sigma, i_{T}) - \mathrm{hol}_{\mc{G}_{0}} (\Sigma, i_{T}) + \mathrm{hol}_{\mc{G}_{0}}(\Sigma, i_{0})- \mathrm{hol}_{\Phi^{*}\mc{G}}(\Sigma, i_{0}) \\
		&= \mathrm{hol}_{\mc{G}}(\Sigma, \sigma_{T}) - \mathrm{hol}_{\mc{G}}(\Sigma, \sigma_{0}),
	\end{align*}
	Where we have used
		\begin{equation*}
			\mathrm{hol}_{\mc{G} \otimes \mc{H}}(\Sigma,\sigma) = \mathrm{hol}_{\mc{G}}(\Sigma,\sigma) + \mathrm{hol}_{\mc{H}}(\Sigma,\sigma)
		\end{equation*}
		and
		\begin{equation*}
			\mathrm{hol}_{\mc{G}^{*}}(\Sigma, \sigma) = -\mathrm{hol}_{\mc{G}}(\Sigma, \sigma).\qedhere
		\end{equation*}
\end{proof}

\subsection{Transgression of bundle gerbes to the loop space}\label{subsec:fusion}

Associated to any bundle gerbe $\mc{G}$ over $M$, there is a principal $\Ab$-bundle $\tgress(\mc{G}) \rightarrow LM$.
Its fibre over a loop $\gamma \in LM$ is
\begin{equation*}
	\tgress(\mc{G})_{\gamma} = h_{0}\Mor(\gamma^{*}\mc{G},\mc{I}),
\end{equation*}
the set of equivalence classes of $1$-morphisms from $\gamma^{*}\mc{G}$ to the trivial bundle gerbe $\mc{I}$ over $S^{1}$.
If $z \in \Ab$, then there is a $\Ab$-bundle with holonomy $z$ on $S^{1}$, which we denote by $P_{z}$.
The fibre $\tgress(\mc{G})_{\gamma}$ is equipped with the $\Ab$-action $(p,z) \mapsto p \otimes P_{z}$.
Moreover, a connection on $\mc{G}$ gives rise to a (superficial and symmetrizing) connection on $\tgress(\mc{G})$ as explained in \cite[Sec.~4.3]{Wa16}.
If $A$ is a 1-morphism from $\mc{G}_{1}$ to $\mc{G}_{2}$, then we obtain a morphism $\tgress(A):\tgress(\mc{G}_{1}) \rightarrow \tgress(\mc{G}_{2})$, which is fibrewise given by the map
\begin{equation}\label{eq:morphismsTransgression}
	\tgress(A): h_{0} \Mor(\gamma^{*}\mc{G}_{1},\mc{I}) \rightarrow h_{0}\Mor(\gamma^{*}\mc{G}_{2},\mc{I}), \quad
	p \mapsto p \circ \gamma^{*}A^{-1}.
\end{equation}

In addition, the product $\lambda$, which is part of the bundle gerbe $\mc{G} = (Y, P, \lambda)$ gives rise to a \emph{fusion structure} on $\tgress(\mc{G}) \rightarrow LM$ (\cite[Theorem 6.2.3, 6.2.4]{B93}, \cite[Theorem 4.4]{BM94}, \cite[\S4]{Wa16}).

To define fusion structures on a principal $\Ab$-bundle $\cP \rightarrow LM$, let $PM$ be the space of paths in $M$ with sitting instants.
We consider it as a diffeological fibre bundle
$\pi \colon PM\rightarrow M \times M$ over $M\times M$, where the bundle map $\pi$ assigns to a path $\gamma \colon [0,1] \rightarrow M$ its start- and end-point: $\pi(\gamma) = (\gamma(0),\gamma(1))$.
We denote its $n$-fold fibre product by $PM^{[n]}$; it is the space of $n$-tuples $(\gamma_1, \ldots, \gamma_{n})$ of paths which start and end at the same two points
$x = \gamma_i(0)$ and $y = \gamma_{i}(1)$.
We denote by $\pr_{i_1,\ldots, i_n} \colon  PM^{[n+k]} \rightarrow PM^{[n]}$ the map which remembers the strands $\gamma_{i_1}$, \ldots, $\gamma_{i_n}$.
The space $PM^{[2]}$ maps to $LM$ by $(\gamma_1,\gamma_2) \mapsto \ol{\gamma}_2*\gamma_1$, 
denote this map by $l \colon PM \rightarrow LM$.
We will often need the pullback  $l^*\cP \rightarrow PM^{[2]}$ of $\cP \rightarrow LM$ 
along this map.
If $M$ is a manifold, then $\cP \rightarrow LM$ is a bundle of Fr\'echet manifods, whereas
$l^*\cP \rightarrow PM^{[2]}$ is merely a diffeological bundle.
We will often write $\cP \rightarrow PM^{[2]}$ instead of $l^*\cP \rightarrow PM^{[2]}$.

\begin{definition}[Fusion structures]
	A \emph{product} on $(\cP,\Theta)$ is a connection-preserving isomorphism
	\[
		\mu \colon \pr_{12}^*\cP \otimes \pr_{23}^*\cP \rightarrow \pr_{13}^*\cP
	\]
	over $P^{[3]}M$.
	A \emph{fusion structure} is a product which is associative, in the sense that
	\be\label{FusionAssociator}
	\mu_{124}(p_{12}\otimes \mu_{234}(p_{23}\otimes p_{34})) = \mu_{134}(\mu_{123}(p_{12}\otimes p_{23})\otimes p_{34})
	\ee
	for all $p_{ij} \in \pr_{ij}^*\cP$ over $P^{[4]}M$, where $\mu_{ijk}$ denotes the isomorphism
	$\pr_{ijk}^*\mu \colon \pr_{ij}^*\cP\otimes \pr_{jk}^*\cP \rightarrow \pr_{ik}^*\cP$ over $P^{[4]}M$.
\end{definition}

Let $\fusb^{\nabla_{\text{sf}}}(LM)$ be the category of fusive principal $\Ab$-bundles with flat, superficial connection on $LM$ as defined in \cite{Wa16}.
The assignment $\tgress:h_{1}\grb(M) \rightarrow \fusb^{\nabla_{\text{sf}}}(LM)$ is then a functor, called \emph{transgression}, (see \cite[Section 4.1]{Wa16}).
In fact, $\tgress$ is an equivalence of categories; this is proved in \cite{Wa16}, where Waldorf constructs a ``regression functor'', which is a weak inverse of transgression.
To be precise, we have the following result.
\begin{theorem}[{\cite[Theorem A]{Wa16}}]
	Let $M$ be a connected smooth manifold with base point $x \in M$.
	Then transgression and regression form an equivalence of monoidal categories,
	\begin{equation*}
		\begin{tikzcd}
			h_{1} \grb(M) \ar[r, shift left, "\tgress"] & \fusb^{\nabla_{\text{sf}}}(LM) \ar[l, shift left, "\mathscr{R}_{x}"].
		\end{tikzcd}
	\end{equation*}
	Moreover, this equivalence is natural with respect to base-point preserving maps.
\end{theorem}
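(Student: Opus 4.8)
The plan is to exhibit the weak inverse $\mathscr{R}_{x}$ explicitly and to produce the unit and counit of an adjoint equivalence, after which monoidality and base-point naturality are checked directly. Transgression $\tgress$ has already been described fibrewise, together with its action \eqref{eq:morphismsTransgression} on morphisms, so the first task is to verify that it genuinely lands in $\fusb^{\nabla_{\text{sf}}}(LM)$: the product $\lambda$ of $\mc{G} = (Y,P,\lambda,B)$ induces the fusion product $\mu$ on $\tgress(\mc{G})$, the associativity of $\lambda$ over $Y^{[4]}$ becomes the fusion associativity \eqref{FusionAssociator}, and the superficial, symmetrizing connection of \cite[Sec.~4.3]{Wa16} meets the flatness and superficiality requirements of the target category. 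Functoriality in the sense of \eqref{eq:PullbackFunctor}--\eqref{eq:PullbackIdentity} is inherited from that of pullback, and compatibility with tensor products makes $\tgress$ monoidal.

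Next I would construct $\mathscr{R}_{x}$. Using the base point, I take the subduction to be endpoint evaluation $P_{x}M \rightarrow M$, $\gamma \mapsto \gamma(1)$, on the space of paths with sitting instants starting at $x$; this is a subduction precisely because $M$ is connected. A pair $(\gamma_{1},\gamma_{2}) \in (P_{x}M)^{[2]}$ has a common endpoint, so $l(\gamma_{1},\gamma_{2}) = \ol{\gamma}_{2}*\gamma_{1}$ is a loop, and I set the gerbe bundle to be $P := l^{*}\cP$ over $(P_{x}M)^{[2]}$. Over triples in $(P_{x}M)^{[3]}$ the fusion product $\mu$ supplies the gerbe multiplication $\lambda$, and the fusion associativity \eqref{FusionAssociator} is exactly the associativity of $\lambda$ over $(P_{x}M)^{[4]}$. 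The connection on $\cP$ pulls back to a connection on $P$ compatible with $\lambda$, while the superficial connection furnishes a curving $B \in \Omega^{2}(P_{x}M,\ab)$; here superficiality guarantees that the construction is insensitive to thin homotopies and reparametrizations of paths. This produces a diffeological bundle gerbe $\mathscr{R}_{x}(\cP)$ with connection, and the action of $\mathscr{R}_{x}$ on morphisms is again defined by pullback along $l$.

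It then remains to build natural isomorphisms $\tgress \circ \mathscr{R}_{x} \Rightarrow \mathrm{id}$ in $\fusb^{\nabla_{\text{sf}}}(LM)$ and $\mathrm{id} \Rightarrow \mathscr{R}_{x} \circ \tgress$ in $h_{1}\grb(M)$, and to verify the triangle identities. For the counit, I identify $\tgress(\mathscr{R}_{x}(\cP))_{\gamma}$ with $\cP_{\gamma}$ by transporting along a chosen path from $x$ to a point on $\gamma$ and using the fusion product to cancel the dependence on that choice, superficiality ensuring the required coherence. For the unit, I compare $\mc{G}$ with $\mathscr{R}_{x}(\tgress(\mc{G}))$, whose fibre over $(\gamma_{1},\gamma_{2})$ is $h_{0}\Mor((\ol{\gamma}_{2}*\gamma_{1})^{*}\mc{G},\mc{I})$; the comparison $1$-isomorphism is assembled from parallel transport in $\mc{G}$ along the tautological paths $\gamma_{1},\gamma_{2}$. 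Monoidality of both isomorphisms is immediate from the fact that all of the above constructions are compatible with tensor products. Finally, base-point naturality holds because a base-point preserving map $f\colon (M,x)\rightarrow(N,y)$ intertwines $f^{*}$ on gerbes with $(Lf)^{*}$ on loop bundles and restricts to $P_{x}M \rightarrow P_{y}N$ compatibly with $l$, $\mu$, and the connection data.

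The hard part is the pair of reconstruction statements in the previous paragraph, i.e.\ essential surjectivity and full faithfulness. The crux is that the \emph{fusion product} is precisely the datum needed to reassemble the multiplicative (gerbe) structure $\lambda$ over the fibre products $(P_{x}M)^{[k]}$ out of a bundle that lives only over $LM$, so that $\mathscr{R}_{x}$ recovers $\mc{G}$ up to a canonical $1$-isomorphism rather than merely up to isomorphism class; simultaneously the \emph{superficial connection} is exactly the amount of connection data on $LM$ that integrates back to a connection-with-curving on $M$. Showing that these two structures determine each other invertibly, and that the resulting unit and counit satisfy the triangle identities and are monoidal, is the substance of the theorem and is carried out in detail in \cite{Wa16}.
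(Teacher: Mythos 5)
This statement is not proved in the paper at all: it is quoted directly from Waldorf (\cite[Theorem A]{Wa16}), and the paper treats it as an external black box. Your sketch accurately outlines Waldorf's actual construction (regression via the based path space $P_{x}M \rightarrow M$, the loop map $l$ on $(P_{x}M)^{[2]}$, fusion product as gerbe product), but since you defer the substantive parts --- reconstruction of the connection data, full faithfulness, essential surjectivity, and the triangle identities --- to \cite{Wa16} itself, your proposal is in effect the same citation-based treatment as the paper's, which is entirely appropriate here.
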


We note that (unlike regression) transgression is natural with respect to all smooth maps $\ph \colon M \rightarrow M$, not just base-point preserving ones, see \cite[Equation (3.4)]{Wa10}.
In the sequel, we shall make use of the following observation.

\begin{proposition}\label{prop:transgressionAndPullback}
We can identify $(L\ph)^{*}\tgress(\mc{G}) = \tgress(\ph^{*}\mc{G})$, and 
$(L\ph)_*\tgress(\ph^{*}A) = \tgress(A) (L\ph)_{*}$.
\end{proposition}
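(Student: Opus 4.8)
The plan is to reduce both claims to the strict functoriality of the pullback of bundle gerbes, recorded in \eqref{eq:PullbackFunctor} and \eqref{eq:PullbackIdentity}, applied fibrewise over $LM$, and then to invoke the naturality of transgression with respect to \emph{all} smooth maps (cf.\ \cite[Equation (3.4)]{Wa10}) to upgrade the resulting fibrewise bijections to honest isomorphisms of fusive principal $\Ab$-bundles with connection.

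First I would establish the identification $(L\ph)^{*}\tgress(\mc{G}) = \tgress(\ph^{*}\mc{G})$ by comparing fibres. Fix a loop $\gamma \in LM$ and write $\delta := L\ph(\gamma) = \ph \circ \gamma$. By the definition of the pullback bundle, the fibre of $(L\ph)^{*}\tgress(\mc{G})$ over $\gamma$ is $\tgress(\mc{G})_{\delta} = h_{0}\Mor(\delta^{*}\mc{G}, \mc{I})$, whereas the fibre of $\tgress(\ph^{*}\mc{G})$ over $\gamma$ is $h_{0}\Mor(\gamma^{*}(\ph^{*}\mc{G}), \mc{I})$. Strictness of the pullback $2$-functor gives the on-the-nose equality $\delta^{*}\mc{G} = (\ph \circ \gamma)^{*}\mc{G} = \gamma^{*}(\ph^{*}\mc{G})$ of bundle gerbes over $S^{1}$, so the two fibres are literally the same set, and the resulting bijection is manifestly $\Ab$-equivariant. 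That this fibrewise equality assembles into an isomorphism of diffeological (resp.\ Fr\'echet) principal $\Ab$-bundles which moreover respects the superficial, symmetrizing connection and the fusion product is precisely the naturality of transgression as a monoidal functor with respect to arbitrary smooth maps, which I would cite from \cite{Wa10, Wa16}.

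With this identification in hand, the map $(L\ph)_{*}$ is the canonical bundle map from the pullback to $\tgress(\mc{G})$ covering $L\ph$; fibrewise it sends an element $q \in \tgress(\ph^{*}\mc{G})_{\gamma} = h_{0}\Mor(\gamma^{*}(\ph^{*}\mc{G}),\mc{I}) = h_{0}\Mor(\delta^{*}\mc{G},\mc{I})$ to the same $q$, now regarded as lying in $\tgress(\mc{G})_{\delta}$. Applying the first part to each of $\mc{G}_{1}, \mc{G}_{2}$ for a $1$-morphism $A \colon \mc{G}_{1} \to \mc{G}_{2}$, the second claim becomes the commutativity of the associated naturality square, which I would check fibrewise using the explicit formula \eqref{eq:morphismsTransgression}. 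Going around one way, $\tgress(\ph^{*}A)_{\gamma}$ sends $p \mapsto p \circ \gamma^{*}(\ph^{*}A)^{-1}$ and then $(L\ph)_{*}$ re-labels the target fibre; going the other way, $(L\ph)_{*}$ re-labels first and then $\tgress(A)_{\delta}$ sends $q \mapsto q \circ \delta^{*}A^{-1}$. The two composites agree because strict functoriality \eqref{eq:PullbackFunctor} yields $\gamma^{*}(\ph^{*}A) = (\ph \circ \gamma)^{*}A = \delta^{*}A$, hence $\gamma^{*}(\ph^{*}A)^{-1} = \delta^{*}A^{-1}$.

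I expect the main obstacle to be not the fibrewise bijection, which is immediate, but the verification underlying the first part: namely that the identification is an isomorphism in $\fusb^{\nabla_{\text{sf}}}(LM)$, that is, that it is smooth and compatible with both the connection and the fusion structure. This is exactly where I would lean on Waldorf's naturality results rather than re-derive them; once the identification is known to be a morphism of fusive bundles with connection, the commutativity of the naturality square reduces to the purely formal bookkeeping above, with no further analysis required.
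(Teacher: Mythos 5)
Your proposal is correct and follows essentially the same route as the paper: both arguments compare fibres using the strict equality $(\ph\circ\gamma)^{*}\mc{G} = \gamma^{*}(\ph^{*}\mc{G})$ for the first identity, verify the second identity fibrewise via the formula \eqref{eq:morphismsTransgression} together with $(\ph\circ\gamma)^{*}A = \gamma^{*}(\ph^{*}A)$, and defer smoothness and compatibility with the connection and fusion structure to Waldorf's naturality of transgression. Your explicit flagging of that last verification as the point where one must lean on \cite{Wa10, Wa16} is exactly what the paper's opening sentence accomplishes implicitly.
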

\begin{proof}
This is an immediate consequence of the naturality of $\tgress$ with respect to $\ph\colon M \rightarrow M$.
For the first equality, note that 
the fibre $(L\ph)^{*}\tgress(\mc{G})_{\gamma}$ at the loop $\gamma \in LM$ is equal to 
\[\tgress(\cG)_{\ph \circ \gamma} = h_{0}\Mor((\ph \circ \gamma)^{*}\mc{G},\mc{I}) = 
h_{0}\Mor(\gamma^* (\ph^{*}\mc{G}),\mc{I}) = \tgress(\ph^*\cG)_{\gamma},
\]
which is the fibre of $\tgress(\ph^*\cG)$ over $\gamma$. 
For the second equality, let $p_{\gamma}$ be an element of $(L\ph)^*\tgress(\mc{G})_{\gamma}$. 
Then $(L\ph)_*p_{\gamma}$ is the same morphism $p_{\gamma} \colon (\ph \circ \gamma)^*\cG \rightarrow \mc{I}$, but now considered as an element of 
$\tgress(\cG)_{\ph\circ\gamma}$.
For an isomorphism $A \colon \mc{G} \rightarrow \mc{H}$, $\tgress(A) (L\ph)_{*}p_{\gamma}$ is therefore $p_{\gamma} \circ (\ph \circ \gamma)^*A^{-1}$, considered as an element of $\tgress(\mc{H})_{\ph\circ\gamma}$.
On the other hand, if we consider $p_{\gamma} \colon \gamma^*(\ph^*\cG) \rightarrow \mc{I}$ as an element of $\tgress(\ph^*\cG)_{\gamma}$, 
then $\tgress(\ph^*A)p_{\gamma}$ is equal to $p_{\gamma} \circ \gamma^*(\ph^*A)^{-1}$ as an element of $\tgress(\ph^*\mc{H})_{\gamma}$, and $L\ph^*\tgress(\ph^*A)p_{\gamma}$ is the same thing considered as an element of $\tgress(\mc{H})_{\ph \circ \gamma}$.
\end{proof}

\section{The group of isomorphisms of a gerbe}\label{sec:GroupOfIso}


Let 
$\cG$ be a bundle gerbe with connection on $M$, and let 
$G$ be a group with an action $\action \colon G \times M \rightarrow M$ that is smooth in the second argument.
Then, each $g\in G$ induces a diffeomorphism $\action_{g}: M \rightarrow M$ by $ \action_{g}(x) := \action(g,x)$.
We say that the action preserves the isomorphism class $[\cG]$ of $\cG$ if $\action_{g}^*\cG$ is isomorphic to $\cG$
for all $g\in G$. 
In this setting, we construct an abelian extension $\widehat{G}_{\cG}$ of the group $G$.

Under the additional 
conditions that $G$ is a Fr\'echet--Lie group 
and that the action is smooth in both variables, we will show in \S\ref{sec:smoothstructure} that $\widehat{G}_{\cG}$ is a Fr\'{e}chet-Lie group in a natural way.

\subsection{The abelian extension as a group.}
As a set, the abelian extension $\widehat{G}_{\cG}$ is given by
\be
	\widehat{G}_{\cG} := \Big\{(g, A)\, ; \, g \in G \text{ and } A \in h_1 \Mor (\action_{g}^*\cG, \cG)\Big\}.
\ee
(For brevity, we write $A$, instead of $\ol{A}$, for classes of $1$-morphisms modulo $2$-morphisms.)
The product of two elements $(g, A)$ and $(h, B)$ of $\widehat{G}_{\mc{G}}$ is defined by
\begin{equation}\label{eq:Product}
	(g, A) (h, B) = (gh, B \circ \action_{h}^{*}A).
\end{equation}
The following diagram shows that $B \circ \rho_{h}^{*}A$ is indeed a morphism from $\rho_{gh}^{*}\mc{G}$ to $\mc{G}$:
	\begin{equation*}
		\begin{tikzcd}
			\mc{G} & \ar[l,"B"'] \action_{h}^{*}\mc{G} & \ar[l,"\action_{h}^{*}A"'] \ar[ll,bend left,"B \circ \action_{h}^{*}A"] \action_{gh}^{*}\mc{G}.
		\end{tikzcd}
	\end{equation*}
\begin{proposition}\label{prop:GhatIsGroup}
	When equipped with the binary operation from Equation \eqref{eq:Product}, $\widehat{G}_{\mc{G}}$ is a group.
\end{proposition}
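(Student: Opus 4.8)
The plan is to verify the three group axioms directly, the only real subtlety being careful bookkeeping of the sources and targets of the $1$-morphisms involved. The key structural input is that $\action$ is a left action, so that $\action_{gh} = \action_g \circ \action_h$, combined with the contravariant functoriality of pullback in the map, which yields $\action_{gh}^* = (\action_g \circ \action_h)^* = \action_h^* \circ \action_g^*$ on $h_1\grb(M)$. This is exactly what makes the composite in \eqref{eq:Product} well typed: $\action_h^* A \colon \action_{gh}^*\cG = \action_h^*\action_g^*\cG \to \action_h^*\cG$ while $B \colon \action_h^*\cG \to \cG$, so $B \circ \action_h^* A \colon \action_{gh}^*\cG \to \cG$ lies in $h_1\Mor(\action_{gh}^*\cG, \cG)$, establishing closure (this is the content of the displayed diagram).

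For associativity I would expand both $((g,A)(h,B))(k,C)$ and $(g,A)((h,B)(k,C))$. Both have first component $ghk$, and their second components are $C \circ \action_k^*(B \circ \action_h^* A)$ and $(C \circ \action_k^* B) \circ \action_{hk}^* A$, respectively. Applying \eqref{eq:PullbackFunctor} to distribute $\action_k^*$ over the composite, together with $\action_k^* \action_h^* = (\action_h \circ \action_k)^* = \action_{hk}^*$, rewrites the first as $C \circ (\action_k^* B \circ \action_{hk}^* A)$, which equals the second by associativity of composition in the category $h_1\grb(M)$.

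For the unit I would take $(e, \1_\cG)$, where $e \in G$ is the neutral element and $\action_e = \mathrm{id}_M$. Using \eqref{eq:PullbackIdentity} we have $\action_h^* \1_\cG = \1_{\action_h^*\cG}$, so $(e,\1_\cG)(h,B) = (h, B \circ \1_{\action_h^*\cG}) = (h,B)$, and since strict functoriality at $\mathrm{id}_M$ gives $\action_e^* A = A$, also $(g,A)(e,\1_\cG) = (g, \1_\cG \circ A) = (g,A)$. For inverses I would use that $h_1\grb(M)$ is a groupoid: given $(g,A)$ with $A \colon \action_g^*\cG \to \cG$, the pullback $\action_{g^{-1}}^* A \colon \cG = \action_{g^{-1}}^*\action_g^*\cG \to \action_{g^{-1}}^*\cG$ is invertible, and I claim $(g,A)^{-1} = (g^{-1}, (\action_{g^{-1}}^* A)^{-1})$. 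Indeed $(g,A)(g^{-1},(\action_{g^{-1}}^* A)^{-1}) = (e,\, (\action_{g^{-1}}^* A)^{-1} \circ \action_{g^{-1}}^* A) = (e, \1_\cG)$, and the reverse product is handled symmetrically using that pullback preserves inverses, $\action_g^* (\action_{g^{-1}}^* A)^{-1} = (\action_g^*\action_{g^{-1}}^* A)^{-1} = A^{-1}$, where $\action_g^* \action_{g^{-1}}^* = (\action_{g^{-1}} \circ \action_g)^* = \mathrm{id}^*$.

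I expect no genuine obstacle: the whole argument is formal, resting on (i) the strictness and functoriality of pullback encoded in \eqref{eq:PullbackFunctor} and \eqref{eq:PullbackIdentity}, and (ii) the fact that $h_1\grb(M)$ is a groupoid with associative composition. The single point that demands attention is the order reversal $\action_{gh}^* = \action_h^* \action_g^*$, which arises from the contravariance of pullback combined with the left-action convention $\action_{gh} = \action_g \circ \action_h$; keeping this consistent throughout is precisely what makes the product associative and the proposed inverses correctly typed.
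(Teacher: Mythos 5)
Your proof is correct and follows essentially the same route as the paper: direct verification of the group axioms using the strict functoriality of pullback, Equations \eqref{eq:PullbackFunctor} and \eqref{eq:PullbackIdentity}, together with the contravariant identity $\action_{gh}^{*} = \action_{h}^{*}\circ\action_{g}^{*}$. Your inverse $(g^{-1},(\action_{g^{-1}}^{*}A)^{-1})$ coincides with the paper's $(g^{-1},\action_{g^{-1}}^{*}A^{-1})$ since the pullback functor commutes with inversion, so there is no substantive difference.
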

\begin{proof}
	Associativity of the product is a straightforward consequence of Equation~\eqref{eq:PullbackFunctor}: 
		\begin{align*}
			\big( (g,A) (h, B) \big) (f, C) &= (gh, B \circ \action_{h}^{*}A)(f, C) \\
								&= (ghf, C \circ \action_{f}^{*} (B \circ \action_{h}^{*} A)) \\
								&= (ghf, C \circ \action_{f}^{*}B \circ \action_{hf}^{*}A) \\
								&= (g, A)\big( (h, B)(f, C) \big).
		\end{align*}
	The unit of $\widehat{G}_{\mc{G}}$ is $(\1,\1_{\mc{G}})$, where $\1$ is the unit of $G$, and $\1_{\mc{G}}$ is the identity morphism from $\mc{G}$ to $\mc{G}$.
	The inverse of $(g, A)$ is given by the formula
	\begin{equation}
		(g, A)^{-1} = (g^{-1},\action_{g^{-1}}^{*}A^{-1}),
	\end{equation}
	as is easily verified using equations \eqref{eq:PullbackFunctor} and \eqref{eq:PullbackIdentity}:
		\begin{align*}
			(g, A)(g^{-1},\action_{g^{-1}}^{*}A^{-1}) &= (\1, \action_{g^{-1}}^{*}A^{-1} \circ \action_{g^{-1}}^{*}A) \\
							   &= (\1, \action_{g^{-1}}^{*} \1_{\action_{g}^{*}\mc{G}})) = (\1, \1_{\mc{G}}). \qedhere
		\end{align*}
\end{proof}
Now, it is clear that the map $\widehat{G}_{\mc{G}} \rightarrow G, (g, A) \mapsto g$ is a surjective group homomorphism, whose kernel, denoted $K$, consists of the (isomorphism classes of) $1$-isomorphisms from $\mc{G}$ to $\mc{G}$.
We thus obtain an abelian extension
\begin{equation}\label{eq:abextension}
	K \rightarrow \widehat{G}_{\mc{G}} \rightarrow G.
\end{equation}

\begin{remark}
If we take for $G$ the group
\[
	\Diff(M,[\mc{G}]) := \{ \ph \in \Diff(M) \,;\, [\mc{G}] = [\ph^{*}\mc{G}] \} \\
\]
of diffeomorphisms of $M$ that preserve the isomorphism class $[\cG]$ of $\cG$, then $\widehat{G}_{\cG}$ is the group
of symmetries of the bundle gerbe $\cG$,
\[
	\widehat{\Diff}(M,\cG) := \{(\ph, A)\,;\, \ph \in \Diff(M, [\cG]), A \in h_1\Mor(\ph^*\cG, \cG)\}.
\]
For any other group $G$, the abelian extension \eqref{eq:abextension} arises 
by pullback along the group homomorphism $G \rightarrow \Diff(M,[\mc{G}])$;
\begin{equation}\label{diag:pullbackabext}
	\begin{tikzcd}
		\widehat{G}_{\cG} \ar[r,dashed] \ar[d,dashed] & \widehat{\Diff}(M,\cG) \ar[d] \\
		G \ar[r] & \Diff(M, [\cG]).
	\end{tikzcd}
\end{equation}
\end{remark}

\begin{proposition}\label{prop:TheKernel}
	There is a natural isomorphism of groups $K \cong H^{1}(M,\Ab)$.
\end{proposition}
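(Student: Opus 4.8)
The plan is to identify $K = h_1\Mor(\mc{G},\mc{G})$, the group of $2$-isomorphism classes of $1$-automorphisms of $\mc{G}$, with the group $\mathrm{Pic}^{\nabla}(M)$ of isomorphism classes of flat principal $\Ab$-bundles with connection on $M$, and then to identify the latter with $H^1(M,\Ab)$ via holonomy. The conceptual point is that $K$ is a torsor over $\mathrm{Pic}^{\nabla}(M)$ which is trivialized by the distinguished base point $\1_{\mc{G}}$.

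First I would record the key special case: unwinding Definition~\ref{def:MorphismsOfBundleGerbes} for $\mc{G}_1 = \mc{G}_2 = \mc{I}_0$, the trivial bundle gerbe with vanishing curving, the subduction becomes $\mc{Y} \to M\times_M M \cong M$ and the curvature condition~\eqref{eq:CurvatureAndCurvings} reads $\curv(A)=0$; together with $\alpha$ (which provides descent data) this shows that morphisms $\mc{I}_0 \to \mc{I}_0$, modulo $2$-morphisms, are exactly isomorphism classes of flat $\Ab$-bundles with connection on $M$, with composition corresponding to tensor product. A flat bundle $L$ thus yields a morphism $\hat L \colon \mc{I}_0 \to \mc{I}_0$, and, using the canonical isomorphism $\mc{G}\otimes\mc{I}_0 \cong \mc{G}$, I would define
\[
\Psi \colon \mathrm{Pic}^{\nabla}(M) \to K, \qquad L \mapsto \1_{\mc{G}}\otimes \hat L .
\]
That $\Psi$ is a group homomorphism follows from the interchange law in the monoidal $2$-category $\grb(M)$ together with the identification of composition in $h_1\Mor(\mc{I}_0,\mc{I}_0)$ with tensor product: $(\1_{\mc{G}}\otimes \hat L)\circ(\1_{\mc{G}}\otimes \hat{L}') = \1_{\mc{G}}\otimes(\hat L\circ \hat{L}') = \1_{\mc{G}}\otimes \widehat{L\otimes L'}$.

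The crux is bijectivity. For surjectivity, given $A\colon \mc{G}\to\mc{G}$ I would tensor with the identity of the dual to form $A\otimes \1_{\mc{G}^*}\colon \mc{G}\otimes\mc{G}^* \to \mc{G}\otimes\mc{G}^*$, and then use the canonical trivialization $\mc{G}\otimes\mc{G}^*\cong\mc{I}_0$ (whose curving is $B+(-B)=0$) to regard it as a morphism $\mc{I}_0\to\mc{I}_0$, i.e.\ a flat bundle $L_A$; one checks $\Psi(L_A)\cong A$. For injectivity, if $\1_{\mc{G}}\otimes\hat L\cong \1_{\mc{G}} = \1_{\mc{G}}\otimes\1_{\mc{I}_0}$, then tensoring with $\1_{\mc{G}^*}$ and cancelling via $\mc{G}^*\otimes\mc{G}\cong\mc{I}_0$ yields $\hat L\cong\1_{\mc{I}_0}$, so $L$ is trivial. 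I expect the main technical obstacle to be precisely the descent step hidden in the first paragraph, namely extracting a well-defined (modulo $2$-morphisms) flat $\Ab$-bundle on $M$ from the data $(\mc{Y},A,\alpha)$ over $Y^{[2]}$: this is where the explicit subduction/diffeological bookkeeping of Definition~\ref{def:MorphismsOfBundleGerbes} genuinely enters, whereas the surjectivity and injectivity arguments above are then formal manipulations with the duality $\mc{G}\otimes\mc{G}^*\cong\mc{I}_0$ and the functoriality relations~\eqref{eq:PullbackFunctor}–\eqref{eq:PullbackIdentity}.

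Finally I would identify $\mathrm{Pic}^{\nabla}(M)\cong H^1(M,\Ab)$. A flat $\Ab$-bundle with connection is determined up to connection-preserving isomorphism by its holonomy homomorphism, giving $\mathrm{Pic}^{\nabla}(M)\cong\Hom(\pi_1(M),\Ab)=\Hom(H_1(M,\Z),\Ab)$; since $H_0(M,\Z)$ is free, the universal coefficient theorem yields $\Hom(H_1(M,\Z),\Ab)\cong H^1(M,\Ab)$. Here it is essential to argue through holonomy rather than a Čech computation, since $\Ab$ need not be connected. Composing these isomorphisms gives $K\cong H^1(M,\Ab)$. Naturality is then inherited from the functoriality of pullback in $\grb(M)$ and of holonomy: for a smooth map the induced maps on $K$ and on $H^1(M,\Ab)$ commute with the isomorphism, which in particular produces the compatibility of the conjugation action of $\widehat{G}_{\mc{G}}$ on $K$ with the pullback action on $H^1(M,\Ab)$ noted in the introduction.
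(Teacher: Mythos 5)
Your proposal is correct and follows essentially the same route as the paper: trivialize the torsor of automorphism classes at the base point $\1_{\mc{G}}$ by tensoring with flat-bundle automorphisms of $\mc{I}_0$, prove the homomorphism property via the interchange law (Eckmann--Hilton), and identify flat bundles with $H^{1}(M,\Ab)$ via holonomy and the universal coefficient theorem. The only deviation is that where the paper cites Waldorf's thesis (Corollary 2.5.5) for the torsor property, you establish bijectivity directly from the invertibility of $\mc{G}$ under $\otimes$ (the duality $\mc{G}\otimes\mc{G}^{*}\cong\mc{I}_0$), which is a self-contained substitute for the same fact.
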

\begin{proof}
	The set of equivalence classes of 1-automorphisms of $\mc{G}$ is a torsor over the group $\mathrm{Pic}_0(M)$
	of isomorphism classes of flat principal $\Ab$-bundles over $M$ by \cite[Corollary 2.5.5]{WaldorfPHD}.
	This is isomorphic to $\mathrm{Hom}(\pi_1(M), \Ab)$ (a flat bundle is completely determined by its holonomy), and
	since $\Ab$ is abelian we have $\Hom(\pi_1(M), \Ab) = \Hom(H_1(M,\Z), \Ab)$. This in turn is canonically isomorphic to 
	$H^1(M,\Ab)$ by the universal coefficient theorem (the $\mathrm{Ext}^{1}_{\Z}(H_0(M,\Z), \Ab)$ term vanishes because $H_0(M,\Z)$ is free).
	
	The torsor map
	\begin{equation*}
		H^{1}(M,\Ab) \times \Aut(\mc{G}) \rightarrow \Aut(\mc{G})
	\end{equation*}
	is given by
	\begin{equation*}
		(L, \mc{A}) \mapsto f(L) \otimes \mc{A},
	\end{equation*}
	where $f$ is the inverse of the map $\Aut(\mc{I}_{0}) \rightarrow H^{1}(M, \Ab)$, defined in \cite{WaldorfPHD}. (Here $\mc{I}_{0}$ is the trival bundle gerbe on $M$.)
	Now, we claim that the map
	\begin{equation*}
		H^{1}(M,\Ab) \rightarrow \Aut(\mc{G}), L \mapsto f(L) \otimes \1_{\mc{G}}
	\end{equation*}
	is a group homomorphism.
	The tensor product of 1-morphisms respects composition, i.e.
	\begin{equation*}
		(\mc{B}_{1} \otimes \mc{B}_{2}) \circ (\mc{A}_{1} \otimes \mc{A}_{2}) = (\mc{B}_{1} \circ \mc{A}_{1}) \otimes (\mc{B}_{2} \circ \mc{A}_{2}).
	\end{equation*}
	(See \cite[p.~59]{WaldorfPHD}.)
	By the Eckmann-Hilton argument, the operations $\otimes$ and $\circ$ agree on $\Aut(\mc{I}_{0})$, we thus have
	\begin{equation*}
		f(L) \circ f(L') = f(L) \otimes f(L').
	\end{equation*}
	Using \cite[Proposition 2.5.1]{WaldorfPHD}, we then compute
	\begin{align*}
		(f(L) \otimes \1_{\mc{G}}) \circ (f(L') \otimes \1_{\mc{G}}) &= (f(L) \circ f(L')) \otimes (\1_{\mc{G}} \circ \1_{\mc{G}}) \\
									     &= (f(L) \otimes f(L')) \otimes \1_{\mc{G}} \\
									     &= f(L \otimes L') \otimes \1_{\mc{G}}. \qedhere
	\end{align*}
\end{proof}
As a result of Proposition \ref{prop:TheKernel} we obtain an action of $G$ on $H^{1}(M,\Ab)$.
We shall see momentarily (Corollary \ref{cor:ConjugationOnH1}) that this action is, unsurprisingly, given by $([\beta], g) \mapsto [\action_{g}^{*} \beta]$.

\subsection{Functoriality of the group extension}\label{sec:functoriality}
The above construction is functorial in the following sense. 
Consider the category where the objects are group actions $G \curvearrowright (M, \cG_{M})$ 
that preserve the isomorphism class of the gerbe $\cG_{M}$ on $M$.
A \emph{morphism} from $G_1 \curvearrowright (M_1, \cG_{1})$ 
to ${G_2 \curvearrowright (M_2, \cG_{2})}$ is a triple $\Pi_{21} = (\pi_{21}, f_{12}, \Lambda_{21})$ consisting of a Lie group homomorphism $\pi_{21} \colon G_1 \rightarrow G_2$, a $G_1$-equivariant smooth map $f_{12} \colon M_2 \rightarrow M_1$, and an isomorphism 
$\Lambda_{12} \colon \cG_{2} \rightarrow f_{12}^*\cG_{1}$ of bundle gerbes.
Since $f_{12}$ is $G_1$-equivariant if $f_{12} \circ \action_{\pi(g)} = \action_{g} \circ f_{12}$ for all $g\in G_1$, the compatibility conditions 
on the triple $(\pi_{21}, f_{12}, \Lambda_{12})$ are summarized in Figure~\ref{fig:morphism}.
\begin{figure}[h!]
\centering
		\begin{tikzcd}[row sep=tiny]
			&G_1 \times M_1  \ar[r, "\action_1"] & M_1 & f_{12}^*\cG_{1} \ar[ddl, dashed]\\
			G_1 \times M_2 \ar[ur, "\mathrm{id}_{G_1}\times f_{12}"] \ar[dr, "\pi_{21} \times \mathrm{id}_{M_2}"']&& &\\
			&G_2 \times M_2 \ar[r, "\action_2"] & M_2 \ar[uu, "f_{12}"'] & \cG_{2} \ar[uu, "\rotatebox{90}{\(\sim\)}","\Lambda_{12}"'] \ar[l, dashed].
		\end{tikzcd}
\caption{A morphism $\Pi = (\pi_{21}, f_{12}, \Lambda_{12})$ from $G_1 \curvearrowright (M_1, \cG_{1})$ 
to ${G_2 \curvearrowright (M_2, \cG_{2})}$.}\label{fig:morphism}
\end{figure}

The composition of a morphism
$\Pi_{21} := (\pi_{21}, f_{12}, \Lambda_{12})$ from $G_1 \curvearrowright (M_1, \cG_{1})$ to ${G_2 \curvearrowright (M_2, \cG_{2})}$
with a morphism  
$\Pi_{32} := (\pi_{32}, f_{23}, \Lambda_{23})$
from $G_2 \curvearrowright (M_2, \cG_{2})$ to ${G_3 \curvearrowright (M_3, \cG_{3})}$
is defined by 
\[
\Pi_{32} \circ \Pi_{21} := (\pi_{32}\circ \pi_{21}, f_{12}\circ f_{23}, f_{23}^*\Lambda_{12} \circ \Lambda_{23}).
\]
That this is indeed a morphism from $(G_{1},M_{1},\mc{G}_{1})$ to $(G_{3},M_{3},\mc{G}_{3})$ can be verified by pasting together the appropriate commutative diagrams, as in Figure \ref{Fig:CompositionOfMorphisms}.
\begin{figure}[h!]
\centering
		\begin{tikzcd}[row sep=tiny]
			&&G_1 \times M_1  \ar[r, "\action_{1}"] & M_1 & f_{12}^*\cG_{1} \ar[ddl, dashed]& &f_{32}^*f_{21}^*\cG_{1}\ar[ddddlll, dashed]\\
			&G_1 \times M_2 \ar[ur, "\mathrm{id} \times f_{12}"] \ar[dr, "\pi_{21} \times \mathrm{id}"]&& &&&\\
		G_1 \times M_3\ar[ur, "\mathrm{id} \times f_{23}"] \ar[dr, "\pi_{21} \times \mathrm{id}"']& &G_2 \times M_2 \ar[r, "\action_{2}"] & 
		M_2 \ar[uu, "f_{12}"] & \cG_{2} \ar[uu, "\rotatebox{90}{\(\sim\)}","\Lambda_{12}"'] \ar[l, dashed]& 
		&f_{23}^*\cG_{2}\ar[uu,  "\rotatebox{90}{\(\sim\)}","f_{32}^*\Lambda_{12}"'] \ar[ddlll, dashed]\\
			&G_2 \times M_3 \ar[ur, "\mathrm{id} \times f_{23}"'] \ar[dr, "\pi_{32} \times \mathrm{id}"']&& &&&\\
			&&G_3 \times M_3  \ar[r, "\action_{3}"] & M_3 \ar[uu, "f_{32}"]&& & \cG_{3} \ar[lll, dashed] \ar[uu, "\rotatebox{90}{\(\sim\)}","\Lambda_{23}"'].
		\end{tikzcd}
\caption{The composition of $(\pi_{21}, f_{12}, \Lambda_{12})$ with  $(\pi_{32}, f_{23}, \Lambda_{23})$.}\label{Fig:CompositionOfMorphisms}
\end{figure}
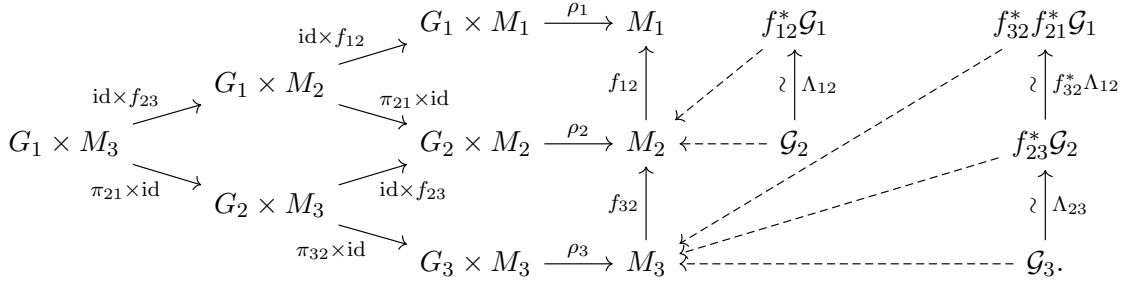

One easily checks that composition is associative;
\begin{eqnarray*}
\Pi_{43} \circ (\Pi_{32}\circ \Pi_{21}) & = & (\pi_{43}\circ\pi_{32}\circ \pi_{21}, f_{12}\circ f_{23} \circ f_{34}, (f_{23}\circ f_{34})^*\Lambda_{12} \circ f_{34}^*\Lambda_{23} \circ \Lambda_{34})\\
& = & (\Pi_{43}\circ \Pi_{32})\circ \Pi_{21}.
\end{eqnarray*}
Recall that to every object $G \curvearrowright (M, \cG_{M})$ we can assign the abelian extension $\widehat{G}_{\cG_{M}}$.
A morphism $\Pi = (\pi, f, \Lambda)$ from $G \curvearrowright (M, \cG_{M})$ 
to ${H \curvearrowright (N, \cG_{N})}$
induces a map $\widehat{\Pi} \colon \widehat{G}_{\cG_{M}} \rightarrow \widehat{H}_{\cG_{N}}$ by
\begin{equation}\label{eq:functorMorphism}
	\widehat{\Pi}(g, A) := (\pi(g), \Lambda^{-1} \circ f^* A \circ \action_{\pi(g)}^* \Lambda).
\end{equation}

\begin{figure}[h!]
\begin{equation*}
	\rho^{*}_{\pi(g)}\mc{G}_{N} \xrightarrow{\rho^{*}_{\pi(g)}\Lambda} \rho^{*}_{\pi(g)}f^{*}\mc{G}_{M} = f^{*}\rho^{*}_{g} \mc{G}_{M} \xrightarrow{f^{*}A} f^{*}\mc{G}_{M} \xrightarrow{\Lambda^{-1}} \mc{G}_{N}.
\end{equation*}
\caption{The morphism $\Lambda^{-1} \circ f^{*}A \circ \rho^{*}_{\pi(g)}\Lambda$ from $\rho_{\pi(g)}^{*}\mc{G}_{N}$ to $ \mc{G}_{N}$.}
\end{figure}

\begin{proposition}[Functoriality]\label{prop:DiscreteFunctoriality}
The map $\widehat{\Pi} \colon \widehat{G}_{\cG_{M}} \rightarrow \widehat{H}_{\cG_{N}}$ is a morphism
of abelian extensions, which reduces to 
$A \mapsto \Lambda^{-1} \circ f^*A \circ \Lambda$ on the kernel.
	\begin{equation}\label{functorialGpMor}
		\begin{tikzcd}
			H^{1}(M, \Ab) \ar[d, "\Lambda^{-1} \circ f^*(\,\cdot\,) \circ \Lambda"] \ar[r] & \widehat{H}_{\cG_{M}} \ar[r] \ar[d, "\widehat{\Pi}"] & G \ar[d, "\pi"] \\
			H^{1}(N, \Ab) \ar[r] & \widehat{G}_{\cG_{N}} \ar[r] & H.
		\end{tikzcd}
	\end{equation}
This assignment is functorial; if $\Pi_{31} = \Pi_{32}\circ \Pi_{21}$, then $\widehat{\Pi}_{31} = \widehat{\Pi}_{32}\circ \widehat{\Pi}_{21}$.
\end{proposition}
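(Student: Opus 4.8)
The statement packages together well-definedness of the formula \eqref{eq:functorMorphism}, the homomorphism property of $\widehat{\Pi}$, commutativity of the diagram \eqref{functorialGpMor} (i.e.\ that $\widehat{\Pi}$ is a morphism of abelian extensions), the description of its restriction to the kernel, and functoriality of $\Pi \mapsto \widehat{\Pi}$. The plan is to reduce every part to two facts: the functoriality of the pullback $2$-functor, Equations \eqref{eq:PullbackFunctor} and \eqref{eq:PullbackIdentity}, and the $G$-equivariance of $f$. After pullback the latter reads $\action_{\pi(g)}^* f^* = f^* \action_g^*$, since $f \circ \action_{\pi(g)} = \action_g \circ f$; this single identity is what lets me commute an action-pullback past the map-pullback, and it will be used at every step.

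First I would dispatch well-definedness, which is precisely the chain of arrows displayed after \eqref{eq:functorMorphism}: the equivariance identity gives the middle equality $\action_{\pi(g)}^* f^*\cG_M = f^* \action_g^*\cG_M$, so $\Lambda^{-1} \circ f^*A \circ \action_{\pi(g)}^*\Lambda$ is a bona fide morphism $\action_{\pi(g)}^*\cG_N \to \cG_N$. For the homomorphism property I would expand both sides of $\widehat{\Pi}\big((g,A)(h,B)\big) = \widehat{\Pi}(g,A)\,\widehat{\Pi}(h,B)$ using the product \eqref{eq:Product}. On the left, $\pi(gh) = \pi(g)\pi(h)$ and \eqref{eq:PullbackFunctor} split $f^*(B \circ \action_h^*A)$ into $f^*B \circ f^*\action_h^*A$. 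On the right, applying $\action_{\pi(h)}^*$ to the second factor and using \eqref{eq:PullbackFunctor} together with $\action_{\pi(h)}^*\action_{\pi(g)}^* = \action_{\pi(gh)}^*$ produces the pair $\action_{\pi(h)}^*\Lambda \circ \action_{\pi(h)}^*\Lambda^{-1}$, which collapses to the identity by \eqref{eq:PullbackIdentity}; the surviving factor $\action_{\pi(h)}^* f^*A$ becomes $f^*\action_h^*A$ by equivariance, and the two sides agree.

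Next I would verify the diagram \eqref{functorialGpMor}. The right-hand square commutes on the nose because the first component of $\widehat{\Pi}(g,A)$ is $\pi(g)$. For the left-hand square I would set $g = \1$: then $\pi(\1) = \1$ and $\action_{\pi(\1)} = \mathrm{id}$, so \eqref{eq:functorMorphism} collapses to $A \mapsto \Lambda^{-1} \circ f^*A \circ \Lambda$, which is the asserted restriction and visibly carries $\Aut(\cG_M)$ into $\Aut(\cG_N)$. To read this off as the map $H^1(M,\Ab) \to H^1(N,\Ab)$ of the diagram, I would transport it through the torsor isomorphism of Proposition \ref{prop:TheKernel}: writing an automorphism as a flat bundle tensored with $\1_{\cG_M}$ and using that $f^*$ is monoidal together with the naturality of $\Aut(\mc{I}_0) \cong H^1(-,\Ab)$, the conjugation by $\Lambda$ drops out and one is left with pullback $f^*$ in cohomology.

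Finally, for functoriality I would insert the composite $\Pi_{32} \circ \Pi_{21} = (\pi_{32}\circ\pi_{21},\, f_{12}\circ f_{23},\, f_{23}^*\Lambda_{12} \circ \Lambda_{23})$ into \eqref{eq:functorMorphism} and compare with $\widehat{\Pi}_{32}\big(\widehat{\Pi}_{21}(g,A)\big)$. Expanding $f_{23}^*$ of the inner $1$-morphism via \eqref{eq:PullbackFunctor}, rewriting $f_{23}^* f_{12}^* = (f_{12}\circ f_{23})^*$, and using equivariance of $f_{23}$ to turn $f_{23}^*\action_{\pi_{21}(g)}^*$ into $\action_{\pi_{31}(g)}^* f_{23}^*$, the outer $\Lambda_{23}^{-1}$ combines with $f_{23}^*\Lambda_{12}^{-1}$ to give $\Lambda_{13}^{-1}$, while $f_{23}^*\Lambda_{12} \circ \Lambda_{23}$ reassembles (after the common action-pullback) into $\Lambda_{13}$; the result is exactly $\widehat{\Pi}_{31}(g,A)$. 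I expect the only real difficulty throughout to be bookkeeping rather than conceptual: because $f$ is contravariant on manifolds while $\pi$ is covariant on groups, the pullbacks nest in the order opposite to the group maps, and one must apply the equivariance identity at exactly the right spot each time to commute the action-pullbacks into place. The homomorphism and functoriality computations are where this is most error-prone.
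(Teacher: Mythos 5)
Your proposal is correct and follows essentially the same route as the paper's own proof: a direct computation using the functoriality of pullback, Equations \eqref{eq:PullbackFunctor} and \eqref{eq:PullbackIdentity}, together with the equivariance identity $\action_{\pi(g)}^* f^* = f^* \action_{g}^*$, applied first to the homomorphism property and then to the composition $\Pi_{32}\circ\Pi_{21}$. Your additional observation that, under the identification of Proposition \ref{prop:TheKernel}, the kernel map $A \mapsto \Lambda^{-1}\circ f^*A\circ\Lambda$ becomes plain pullback $f^*$ in cohomology goes beyond what the proposition asserts, but it is also correct.
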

\begin{proof}
This is a straightforward calculation using the functoriality of the pullback. In detail, to see that $\widehat{\Pi}$ is a homomorphism, note that (using Equation \eqref{eq:Product}),
\begin{eqnarray*}
\widehat{\Pi}(g_1, A_1) \widehat{\Pi}(g_2, A_2)&=& \left(\pi(g_1)\pi(g_2), \Big(\Lambda^{-1} \circ f^*A_2 \circ \action_{\pi(g_2)}^*\Lambda\Big) 
\circ
\action_{\pi(g_2)}^*\Big(\Lambda^{-1} \circ f^*A_1 \circ \action_{\pi(g_1)}^*\Lambda\Big) \right)\\
&=& \left(
\pi(g_1g_2),  \Lambda^{-1} \circ f^*A_2 \circ (f\circ \action_{\pi(g_2)})^*A_1 \circ (\action_{\pi(g_1)}\circ \action_{\pi(g_2)})^*\Lambda
\right)\\
&=&\left(
\pi(g_1g_2), \Lambda^{-1} \circ f^*(A_2 \circ \action_{g_2}^* A_1)\circ \action_{\pi(g_1g_2)}^*\Lambda
\right) = \widehat{\Pi}((g_1,A_1)(g_2,A_2)),
\end{eqnarray*}
where the third equality uses $f\circ \action_{\pi(g_2)} = \action_{g_2}\circ f$.

Now we come to functoriality, let
$\Pi_{21} = (\pi_{21}, f_{12}, \Lambda_{12})$ be a morphism from $(G_1, M_1, \cG_{1})$ to $(G_2, M_2, \cG_{2})$ and let
$\Pi_{32} = (\pi_{32}, f_{23}, \Lambda_{23})$ be a morphism $(G_2, M_2, \cG_{2})$ to $(G_3, M_3, \cG_{3})$ and set $\Pi_{31} = \Pi_{32} \circ \Pi_{21} = (\pi_{32}\circ \pi_{21}, f_{12}\circ f_{23}, f_{23}^*\Lambda_{12} \circ \Lambda_{23})$.
We then have
\begin{align*}
	\widehat{\Pi}_{32}(\widehat{\Pi}_{21}(g,A))&= \widehat{\Pi}_{32}(\pi_{21}(g),\Lambda_{12}^{-1} \circ f^{*}_{12}A \circ \rho^{*}_{\pi_{21}(g)} \Lambda_{12}) \\
	&= (\pi_{32} \circ \pi_{21} (g), \Lambda_{23}^{-1} \circ f_{23}^{*} ( \Lambda_{12}^{-1} \circ f^{*}_{12}A \circ \rho^{*}_{\pi_{21}(g)} \Lambda_{12} ) \circ \rho^{*}_{\pi_{32} \circ \pi_{21}(g)} \Lambda_{23}) \\
	&= (\pi_{32} \circ \pi_{21} (g), \Lambda_{23}^{-1} \circ f_{23}^{*} \Lambda_{12}^{-1} \circ (f_{12} \circ f_{23})^{*}A \circ f_{23}^{*}(\rho^{*}_{\pi_{21}(g)} \Lambda_{12} ) \circ \rho^{*}_{\pi_{32} \circ \pi_{21}(g)} \Lambda_{23}) \\
	&= (\pi_{32} \circ \pi_{21} (g), (f_{23}^{*} \Lambda_{12} \circ \Lambda_{23})^{-1} \circ (f_{12} \circ f_{23})^{*}A \circ \rho^{*}_{\pi_{32} \circ \pi_{21}(g)} (f_{23}^{*} \Lambda_{12} \circ \Lambda_{23})) \\
	&= \widehat{\Pi}_{31}(g,A).
\end{align*}
	Here, we have used the condition $f_{23} \circ \rho_{\pi_{32}(h)} = \rho_{h} \circ f_{23}$, which implies $\rho^{*}_{\pi_{32}(h)} f_{23}^{*} = f_{23}^{*} \rho_{h}^{*}$, with $h = \pi_{21}(g)$, to go from the third line to the fourth.
\end{proof}

As a special case, suppose that the action of $H$ on $M$ preserves the isomorphism class of a bundle gerbe $\cG$.
Suppose that $\pi: G \rightarrow H$ is a group homomorphism.
We then set $\Pi = (\pi, \1,\1)$.
It follows that $\widehat{\Pi}(g,A) = (\pi(g),A)$.
In this context we are justified in writing $\widehat{\pi} := \widehat{\Pi}$, and Proposition \ref{functorialGpMor} specializes to the following corollary.

\begin{corollary}[Functoriality under group homomorphisms]
The map $\widehat{\pi}\colon \widehat{G}_{\cG} \rightarrow \widehat{H}_{\cG}$ is
a morphism of abelian extensions that reduces to the identity on $H^1(M,\Ab)$,
	\begin{equation}\label{functorialGp}
		\begin{tikzcd}
			H^{1}(M, \Ab) \ar[d, equal] \ar[r] & \widehat{G}_{\cG} \ar[r] \ar[d, "\widehat{\pi}"] & G \ar[d, "\pi"] \\
			H^{1}(M, \Ab) \ar[r] & \widehat{H}_{\cG} \ar[r] & H.
		\end{tikzcd}
	\end{equation}
This assignment is functorial; if 
$\pi_{31} = \pi_{32}\circ \pi_{21}$, then $\widehat{\pi_{31}} = \widehat{\pi_{32}}\circ \widehat{\pi_{21}}$.
\end{corollary}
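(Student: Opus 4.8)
The plan is to deduce the corollary directly from the general functoriality statement in Proposition~\ref{prop:DiscreteFunctoriality} by exhibiting $\widehat{\pi}$ as $\widehat{\Pi}$ for a suitable morphism $\Pi$. Concretely, I would set $\Pi := (\pi, \1_{M}, \1_{\cG})$ and regard it as a morphism from $G \curvearrowright (M, \cG)$ to $H \curvearrowright (M, \cG)$. First I would verify that this triple really is a morphism in the sense of Section~\ref{sec:functoriality}: the map $\1_{M} \colon M \to M$ is $G$-equivariant precisely because the $G$-action is by definition $\action_{g} = \action_{\pi(g)}$, so that $\1_{M} \circ \action_{\pi(g)} = \action_{g} \circ \1_{M}$ holds trivially; and $\1_{\cG} \colon \cG \to \1_{M}^{*}\cG$ is a legitimate isomorphism of bundle gerbes since $\1_{M}^{*}\cG = \cG$, indeed $\1_{M}^{*}\1_{\cG} = \1_{\cG}$ by~\eqref{eq:PullbackIdentity}.

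Next I would specialize the defining formula~\eqref{eq:functorMorphism}. With $f = \1_{M}$ and $\Lambda = \1_{\cG}$ we have $f^{*}A = A$, $\Lambda^{-1} = \1_{\cG}$, and $\action_{\pi(g)}^{*}\Lambda = \1_{\action_{\pi(g)}^{*}\cG}$ again by~\eqref{eq:PullbackIdentity}; composing via~\eqref{eq:PullbackFunctor} then collapses the expression to
\[
\widehat{\Pi}(g, A) = \big(\pi(g),\ \1_{\cG} \circ A \circ \1_{\action_{\pi(g)}^{*}\cG}\big) = (\pi(g), A),
\]
which justifies writing $\widehat{\pi} := \widehat{\Pi}$. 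That $\widehat{\pi}$ is a morphism of abelian extensions covering $\pi$ is then immediate from Proposition~\ref{prop:DiscreteFunctoriality}, and its restriction to the kernel is the map $A \mapsto \Lambda^{-1} \circ f^{*}A \circ \Lambda = \1_{\cG} \circ A \circ \1_{\cG} = A$, i.e.\ the identity on $H^{1}(M, \Ab)$. This produces exactly the commuting diagram~\eqref{functorialGp}.

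Finally, for the functoriality claim I would combine the functoriality of $\widehat{(-)}$ from Proposition~\ref{prop:DiscreteFunctoriality} with the observation that the special morphisms compose among themselves. Writing $\Pi_{21} = (\pi_{21}, \1_{M}, \1_{\cG})$ and $\Pi_{32} = (\pi_{32}, \1_{M}, \1_{\cG})$, the composition rule gives
\[
\Pi_{32} \circ \Pi_{21} = \big(\pi_{32} \circ \pi_{21},\ \1_{M} \circ \1_{M},\ \1_{M}^{*}\1_{\cG} \circ \1_{\cG}\big) = (\pi_{32}\circ\pi_{21},\, \1_{M},\, \1_{\cG}),
\]
where I once more invoke~\eqref{eq:PullbackIdentity} to see $\1_{M}^{*}\1_{\cG} = \1_{\cG}$. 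Hence the special morphism attached to $\pi_{31} = \pi_{32}\circ\pi_{21}$ is precisely $\Pi_{32}\circ\Pi_{21}$, and applying $\widehat{\Pi}_{31} = \widehat{\Pi}_{32}\circ\widehat{\Pi}_{21}$ yields $\widehat{\pi_{31}} = \widehat{\pi_{32}}\circ\widehat{\pi_{21}}$. I expect no real obstacle here: the entire content is bookkeeping, and the only point requiring any attention is confirming that the composite of two ``identity-on-$(M,\cG)$'' morphisms is again of this form, which is exactly where~\eqref{eq:PullbackIdentity} enters.
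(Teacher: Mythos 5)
Your proposal is correct and is essentially the paper's own argument: the paper likewise sets $\Pi = (\pi, \1_{M}, \1_{\cG})$, notes that $\widehat{\Pi}(g,A) = (\pi(g),A)$, and obtains the corollary as a direct specialization of Proposition~\ref{prop:DiscreteFunctoriality}. You merely spell out the verifications (equivariance of $\1_{M}$, the collapse of formula~\eqref{eq:functorMorphism} via~\eqref{eq:PullbackIdentity}, and closure of the special morphisms under composition) that the paper leaves implicit.
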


\subsection{Interpretation in terms of the transgressed bundle}\label{sec:transgression}

Recall that the transgression of $\mc{G}$ is a principal $\Ab$-bundle $\mathscr{T}(\mc{G}) \rightarrow LM$, equipped with a connection $\nabla$, and a fusion structure $\mu$. In what follows, it will be convenient to realize $\widehat{G}_{\mc{G}}$ as a group of automorphisms of $\mathscr{T}(\mc{G}) \rightarrow LM$ that preserve
 both $\nabla$ and $\mu$. 


Indeed, let $\Aut_{G}(\mathscr{T}(\mc{G}), \nabla, \mu)$ be the group whose elements are pairs $(g, \Phi)$ of an element $g\in G$, together with a 
(not necessarily vertical) isomorphism $\Phi$ of 
$\mathscr{T}(\mc{G}) \rightarrow LM$ which preserves both $\nabla$ and $\mu$, and which covers the action $L\action_{g}$ of $g$ 
on the loop space $LM$. 

\begin{figure}[h!]
\centering
\begin{equation}
\begin{tikzcd}\label{diag:localAutomorphism}
		\mathscr{T}(\mc{G}) \ar[r, "\Phi"] \ar[d] & \mathscr{T}(\mc{G}) \ar[d] \\
		LM \ar[r,"L\action_{g}"] & LM.
	\end{tikzcd}
\end{equation}
\caption{Commutative diagram for $(g, \Phi) \in \Aut_{G}(\mathscr{T}(\mc{G}), \nabla, \mu)$.}
\end{figure}
We construct an isomorphism $\tgress_{L} \colon \widehat{G}_{\cG}\stackrel{\sim}{\rightarrow}\Aut_{G}(\mathscr{T}(\mc{G}), \nabla, \mu)$.
For 
$(g, A) \in \widehat{G}_{\cG}$,
transgression of $A$ yields a vertical isomorphism $\mathscr{T}(A) \colon \mathscr{T}(\rho_{g}^*\mc{G}) \rightarrow  \mathscr{T}(\mc{G})$ that preserves both $\nabla$ and $\mu$. Since $\tgress(\action_{g}^{*}\mc{G}) = (L\action_{g})^{*}\tgress(\mc{G})$ by Proposition~\ref{prop:transgressionAndPullback}, 
we obtain an element of $\Aut_{G}(\mathscr{T}(\mc{G}), \nabla, \mu)$ by concatenating 
$\mathscr{T}(A)^{-1}$ with the canonical map $(L\action_{g})_* \colon (L{\action_{g}})^*\mathscr{T}(\mc{G}) \rightarrow \mathscr{T}(\mc{G})$, as illustrated in the following diagram:
\begin{equation}\label{diag:ConstructTl}
	\begin{tikzcd}
		\mathscr{T}(\mc{G}) \arrow[r, "\mc{T}(A)^{-1}"] \arrow[d] & \mathscr{T}(\action_{g}^{*} \mc{G}) \arrow[r, equal] \arrow[d] & (L\action_{g})^{*}\mathscr{T}(\mc{G}) \arrow[r,"(L\action_{g})_*"] \arrow[d] & \mathscr{T}(\mc{G}) \arrow[d] \\
		LM \arrow[r, equal] & LM \arrow[r, equal] & LM \arrow[r, "L\action_{g}"] & LM.
	\end{tikzcd}
\end{equation}

For $\beta \in H^{1}(M, \Ab) \cong \Hom(H_{1}(M,\Z),\Ab)$, we denote by $\overline{\beta}$
the vertical automorphism of $\tgress(\mc{G})$ which is fibrewise given by $\overline{\beta}:\tgress(\mc{G})_{\gamma} \rightarrow \tgress(\mc{G})_{\gamma}, p \mapsto p \cdot \beta([\gamma])^{-1}$.


\begin{proposition}\label{prop:GroupTransgression}
	Transgression yields a group isomorphism 
	\begin{align*}
		\tgress_{L}:\widehat{G}_{\cG} &\rightarrow \Aut_{G}(\mathscr{T}(\mc{G}), \nabla, \mu), \\
		(g, A) & \mapsto (g, (L\action_{g})_{*} \circ \mathscr{T}(A)^{-1})
	\end{align*}
	with the property that $\tgress_{L}(\beta) = (\1,\overline{\beta})$ for all $\beta \in H^{1}(M,\Ab)$. It is an isomorphism of abelian extensions in the sense that the following diagram commutes:
	\begin{equation}\label{diag:TransgressionOfDiffeo}
		\begin{tikzcd}
			H^{1}(M, \Ab) \ar[d, equal] \ar[r] & \widehat{G}_{\cG} \ar[r] \ar[d,"\tgress_{L}"] & G \ar[d, equal] \\
			H^{1}(M, \Ab) \ar[r] & \Aut_{G}(\mathscr{T}(\mc{G}), \nabla, \mu) \ar[r] & G.
		\end{tikzcd}
	\end{equation}
\end{proposition}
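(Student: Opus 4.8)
The plan is to verify, in turn, that $\tgress_{L}$ is (i) well defined, (ii) a group homomorphism, and (iii) a bijection, and that (iv) it restricts to $\beta\mapsto(\1,\ol\beta)$ on the kernel. The commuting diagram \eqref{diag:TransgressionOfDiffeo} then follows at once, since the first component of $\tgress_{L}(g,A)$ is $g$ by construction, and (iv) identifies the left vertical arrow. The two workhorses throughout are the functoriality of $\tgress$ together with the two identities of Proposition~\ref{prop:transgressionAndPullback}, and Waldorf's theorem that $\tgress$ is an equivalence of categories.

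For well-definedness, I would observe that since $\tgress$ is a functor into $\fusb^{\nabla_{\text{sf}}}(LM)$, the morphism $\tgress(A)$ is a vertical isomorphism preserving $\nabla$ and $\mu$; under the identification $\tgress(\action_{g}^{*}\cG)=(L\action_{g})^{*}\tgress(\cG)$ of Proposition~\ref{prop:transgressionAndPullback}, the canonical map $(L\action_{g})_{*}$ covers $L\action_{g}$ and likewise respects the pulled-back connection and fusion product, so the composite $(L\action_{g})_{*}\circ\tgress(A)^{-1}$ is an automorphism covering $L\action_{g}$ that preserves $\nabla$ and $\mu$, i.e.\ an element of $\Aut_{G}(\tgress(\cG),\nabla,\mu)$. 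This is precisely diagram~\eqref{diag:ConstructTl}.

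For the homomorphism property, writing $T_{g}:=(L\action_{g})_{*}$ and using the product \eqref{eq:Product}, I would compute
\begin{align*}
\tgress_{L}(g,A)\,\tgress_{L}(h,B)
&=\big(gh,\; T_{g}\circ\tgress(A)^{-1}\circ T_{h}\circ\tgress(B)^{-1}\big),
\end{align*}
where the group law on $\Aut_{G}$ is composition of automorphisms. Rearranging the second identity of Proposition~\ref{prop:transgressionAndPullback} gives $\tgress(A)^{-1}\circ T_{h}=T_{h}\circ\tgress(\action_{h}^{*}A)^{-1}$, while functoriality of the pullback yields the cocycle relation $T_{g}\circ T_{h}=T_{gh}$ (using $\action_{gh}=\action_{g}\circ\action_{h}$). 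Substituting and then invoking functoriality of $\tgress$ in the form $\tgress(\action_{h}^{*}A)^{-1}\circ\tgress(B)^{-1}=\tgress(B\circ\action_{h}^{*}A)^{-1}$ collapses the expression to $\big(gh,\,T_{gh}\circ\tgress(B\circ\action_{h}^{*}A)^{-1}\big)=\tgress_{L}\big((g,A)(h,B)\big)$.

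Bijectivity I would deduce fibrewise over $G$ from the full faithfulness of $\tgress$: for fixed $g$, the assignment $A\mapsto\tgress(A)^{-1}$ is a bijection from $h_{1}\Mor(\action_{g}^{*}\cG,\cG)$ onto the vertical, connection- and fusion-preserving isomorphisms $\tgress(\cG)\to(L\action_{g})^{*}\tgress(\cG)$, and postcomposition with the fixed isomorphism $T_{g}$ identifies these with the automorphisms of $\tgress(\cG)$ covering $L\action_{g}$ that preserve $\nabla$ and $\mu$ (no automorphism of the target is lost, as $T_{g}$ itself preserves $\nabla$ and $\mu$). Finally, for the kernel, $\beta\in H^{1}(M,\Ab)$ corresponds to $(\1,f(\beta)\otimes\1_{\cG})$ by Proposition~\ref{prop:TheKernel}, and since $\action_{\1}=\mathrm{id}$ forces $T_{\1}=\mathrm{id}$, one has $\tgress_{L}(\beta)=(\1,\tgress(f(\beta)\otimes\1_{\cG})^{-1})$. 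I expect \textbf{this last identification to be the main obstacle}: using \eqref{eq:morphismsTransgression} and the monoidal structure of $\tgress$, one must show that $\tgress(f(\beta)\otimes\1_{\cG})$ acts on the fibre over $\gamma$ by multiplication by the holonomy of the flat bundle $f(\beta)$ around $\gamma$, which equals $\beta([\gamma])$; taking inverses then yields exactly $\ol\beta\colon p\mapsto p\cdot\beta([\gamma])^{-1}$ and pins down the sign and normalisation. Granting this, the left-hand square of \eqref{diag:TransgressionOfDiffeo} commutes, completing the proof.
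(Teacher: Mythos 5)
Your proposal is correct and takes essentially the same route as the paper's proof: well-definedness via diagram~\eqref{diag:ConstructTl}, the homomorphism property from Proposition~\ref{prop:transgressionAndPullback} combined with functoriality of $\tgress$, bijectivity from full faithfulness of transgression, and the kernel identification $\tgress_{L}(\beta)=(\1,\ol{\beta})$ by matching the tensoring with $\gamma^{*}\beta^{-1}$ against the $\Ab$-action through holonomy. The step you flag as the main obstacle is precisely the (terse) verification the paper itself performs, so no further work is needed.
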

\begin{proof}
	Let $\gamma \in LM$ be arbitrary.
	To prove that $\tgress_{L}(\beta) = (\1,\overline{\beta})$ one may verify that both sides correspond to the fibrewise defined map $p \mapsto p \otimes \gamma^{*}\beta^{-1}$. Indeed, $\tgress(\beta):p \mapsto p \circ \gamma^{*} \beta^{-1}$, and $\beta(\gamma)$ is the holonomy of the pullback of $\beta$ along $\gamma$.	
	This tells us that the square on the left in Diagram \eqref{diag:TransgressionOfDiffeo} commutes. 
	That the square on the right commutes follows from Diagram \eqref{diag:ConstructTl}.
	
	Now, we prove that $\tgress_{L}$ is a group homomorphism.
	To that end, let $(g, A)$ and $(h, B)$ be elements of $\widehat{G}_{\mc{G}}$.
	We must then show that
	\begin{equation*}
		(L\action_{g})_{*} \circ \tgress(A)^{-1} \circ (L\action_{h})_{*} \circ \tgress(B)^{-1} \overset{!}{=} (L(\action_{gh}))_{*} \circ \tgress(B \circ \rho_{h}^{*} A)^{-1}.
	\end{equation*}
	For brevity, let us omit the symbol $\circ$.
	Making repeated use of Proposition~\ref{prop:transgressionAndPullback}, we then compute
	\begin{align*}
		(L\action_{g})_{*} \tgress(A)^{-1} (L\action_{h})_{*} \tgress(B)^{-1} &= (L\action_{g})_{*} (L\action_{h})_{*} (L\action_{h})_{*}^{-1} \tgress(A)^{-1} (L\action_{h})_{*} \tgress(B)^{-1} \\
								       &= (L(\action_{gh}))_{*} \tgress(\action_{h}^{*}A)^{-1} \tgress(B)^{-1} \\
								       &= (L(\action_{gh}))_{*} \tgress(B \action_{h}^{*} A)^{-1},
	\end{align*}
	as desired.
	Bijectivity of $\tgress_{L}$ follows immediately from the fact that $\tgress$ is an equivalence:
	To see that $\tgress_{L}$ is injective, note that $\tgress_{L}(g,A) = \tgress_{L}(h,B)$ implies $g = h$ and 
	$\tgress(A) = \tgress(B)$, so $A = B$ because $\tgress$ is faithful.
	For surjectivity, let $(g, \Phi) \in \Aut_{G}(\tgress(\mc{G}), \nabla, \mu)$.
	Then $(L_{\action_{g}})^{-1}_{*}\circ \Phi$ is a bundle automorphism 
	from $\tgress(\cG)$ to $(L_{\action_{g}})^*\tgress(\cG)$ that covers the identity map on $LM$, and as such it is
	a morphism in $\fusb^{\nabla_{\text{sf}}}(LM)$.
	Since $\tgress$ is a full functor, there exists a morphism $A$ in  $h_{1} \grb(M)$ such that $\tgress(A)^{-1} = (L_{\action_{g}})_{*}^{-1}\circ \Phi$.
	It then follows that $\tgress_{L}(g, A) = (g,\Phi)$.
%
\end{proof}
In particular, this allows us to see that the action by conjugation of $G$ on $H^{1}(M,\Ab) \subseteq \widehat{G}_{\cG}$ is simply pullback along the action on $M$.
\begin{corollary}\label{cor:ConjugationOnH1}
Conjugation of $\beta \in H^{1}(M,\Ab)$ by $\widehat{g} = (g, \Phi) \in \widehat{G}_{\cG}$ is given by
	\begin{equation*}
		\widehat{g}{\,}^{-1} \beta \,\widehat{g} = \action_{g}^{*}\beta.
	\end{equation*}
\end{corollary}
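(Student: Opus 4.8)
The plan is to transport the entire computation to the group $\Aut_{G}(\tgress(\mc{G}),\nabla,\mu)$ through the isomorphism $\tgress_{L}$ of Proposition~\ref{prop:GroupTransgression}. Since $\tgress_{L}$ is an isomorphism of groups, it intertwines conjugation, so it suffices to conjugate $\tgress_{L}(\beta) = (\1,\ol{\beta})$ by $\tgress_{L}(\widehat{g}) = (g,\Phi)$ and to recognise the result as $\tgress_{L}(\action_{g}^{*}\beta) = (\1,\ol{\action_{g}^{*}\beta})$. The homomorphism computation already carried out in the proof of Proposition~\ref{prop:GroupTransgression} shows that the product in $\Aut_{G}(\tgress(\mc{G}),\nabla,\mu)$ is composition of bundle automorphisms, $(g,\Phi)(h,\Psi) = (gh,\Phi\circ\Psi)$, and that the inverse of $(g,\Phi)$ is $(g^{-1},\Phi^{-1})$. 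Hence
\begin{equation*}
	(g,\Phi)^{-1}(\1,\ol{\beta})(g,\Phi) = (\1,\Phi^{-1}\circ\ol{\beta}\circ\Phi),
\end{equation*}
and the statement reduces to the fibrewise identity $\Phi^{-1}\circ\ol{\beta}\circ\Phi = \ol{\action_{g}^{*}\beta}$.

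The key step is this fibrewise identity, which I would verify as follows. Fix $\gamma\in LM$ and $p\in\tgress(\mc{G})_{\gamma}$. Because $\Phi$ covers $L\action_{g}$, the point $\Phi(p)$ lies in the fibre over $\action_{g}\circ\gamma$, where by definition $\ol{\beta}$ acts by scaling with $\beta([\action_{g}\circ\gamma])^{-1}\in\Ab$. Since $\Phi$ is an isomorphism of principal $\Ab$-bundles, it is $\Ab$-equivariant, so this scaling commutes with $\Phi^{-1}$ and we obtain
\begin{equation*}
	(\Phi^{-1}\circ\ol{\beta}\circ\Phi)(p) = p\cdot\beta([\action_{g}\circ\gamma])^{-1}.
\end{equation*}
Finally $[\action_{g}\circ\gamma] = (\action_{g})_{*}[\gamma]$ in $H_{1}(M,\Z)$, and under the identification $H^{1}(M,\Ab)\cong\Hom(H_{1}(M,\Z),\Ab)$ used in Proposition~\ref{prop:TheKernel} one has $\beta\circ(\action_{g})_{*} = \action_{g}^{*}\beta$. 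Therefore the scaling factor equals $(\action_{g}^{*}\beta)([\gamma])^{-1}$, which is exactly the fibrewise prescription defining $\ol{\action_{g}^{*}\beta}$, completing the reduction.

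I expect the only genuine content to reside in the fibrewise step, where two facts must be used simultaneously: that $\Phi$ covers $L\action_{g}$, which shifts the base loop at which $\ol{\beta}$ is evaluated from $\gamma$ to $\action_{g}\circ\gamma$; and that $\Phi$ is $\Ab$-equivariant, which allows conjugation by $\Phi$ to leave the vertical scaling factor untouched. The remaining identification $\beta\circ(\action_{g})_{*} = \action_{g}^{*}\beta$ is merely the definition of the pullback on $\Hom(H_{1}(M,\Z),\Ab)$ via the universal-coefficient isomorphism and carries no real difficulty. As a sanity check, the left-hand square of Diagram~\eqref{diag:TransgressionOfDiffeo} already records that $\tgress_{L}$ sends $\beta$ to $(\1,\ol{\beta})$, so no separate verification of that correspondence is required here.
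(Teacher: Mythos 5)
Your proposal is correct and follows essentially the same route as the paper: both transport the computation to $\Aut_{G}(\tgress(\mc{G}),\nabla,\mu)$ via Proposition~\ref{prop:GroupTransgression} and then verify the identity fibrewise, using that $\Phi$ covers $L\action_{g}$ (so $\ol{\beta}$ is evaluated at $\action_{g}\circ\gamma$) and that $\Phi$ is $\Ab$-equivariant (so the scalar passes through $\Phi^{-1}$). The paper's proof is just a one-line version of your fibrewise computation, with the bookkeeping about the group law in $\Aut_{G}$ and the identification $\beta\circ(\action_{g})_{*}=\action_{g}^{*}\beta$ left implicit.
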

\begin{proof}
	Let $\gamma \in LM$ and $p \in \tgress(\mc{G})_{\gamma}$.
	Then 
		\begin{equation*}
		\widehat{g}{\,}^{-1} \beta \,\widehat{g}  (p) = \Phi^{-1} \beta \Phi(p) = \Phi^{-1} (\Phi(p) \cdot \beta(\rho_{g} \gamma)) = p \cdot \beta(\rho_{g} \gamma) = 
		\rho_{g}^{*}\beta(p).  \qedhere
	\end{equation*}
\end{proof}

\section{Lie group structures}\label{sec:smoothstructure}

Let $M$ be an orientable, connected, finite-dimensional manifold for which $H_1(M,\Z)$ is finitely generated.
Let $G$ be a Fr\'echet--Lie group with a smooth action $\rho \colon G \times M \rightarrow M$ that preserves the isomorphism class of a bundle gerbe $\cG$ on $M$.
In this section, we equip $\widehat{G}_{\mc{G}}$ with a (Fr\'echet) Lie group structure 
that turns the short exact sequence
\begin{equation}\label{eq:AbelianGroupExtensionIntro4}
	H^1(M, \Ab) \longrightarrow \widehat{G}_{\mc{G}} \longrightarrow G 
\end{equation}
into an abelian extension of (Fr\'echet) Lie groups.

\subsection{The central Lie algebra extension}

Before going to the group level, we first describe the corresponding Lie algebra extension
\begin{equation}\label{eq:AlgExtension2}
H^1(M, \ab) \rightarrow \widehat{\fg}_{\omega} \rightarrow \fg.
\end{equation}
This is a central extension, and, correspondingly,  the abelian extension \eqref{eq:AbelianGroupExtensionIntro4} will be central when restricted to the 
identity component of $G$.
If $\omega \in \Omega^3(M,\fz)$ is the curvature of $\cG$, then $\rho_{g}^*\omega = \omega$ for all $g\in G$. It follows that 
the Lie derivative $L_{X_{M}}\omega$ along the fundamental vector field $X_{M}$
of $X\in \fg$ is zero. Since $d\omega = 0$, we conclude that $i_{X_{M}}\omega$ is closed. 
\begin{proposition}
If the action of $G$ on $M$ preserves the isomorphism class of the bundle gerbe $\cG$, then 
the action is exact, in the sense that $i_{X_{M}}\omega$ is exact for all $X\in \fg$.
\end{proposition}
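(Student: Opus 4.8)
The plan is to show that the de Rham class $[i_{X_M}\omega] \in H^1_{\dR}(M,\ab)$ vanishes, using the holonomy criterion developed in Lemma~\ref{Lemma:HolonomyCurvature}. Since we already know $i_{X_M}\omega$ is closed, exactness is equivalent to $\int_{S^1} c^*(i_{X_M}\omega) = 0$ for every smooth loop $c \colon S^1 \to M$, by de Rham's theorem together with the fact that $H_1(M,\Z)$ is finitely generated (so $H^1_{\dR}(M,\R)$ is finite-dimensional and the pairing with cycles is nondegenerate). The idea is to interpret each such integral as an infinitesimal holonomy and to exploit the hypothesis that the action preserves $[\cG]$.

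First I would fix $X \in \fg$ and let $\phi_t = \rho_{\exp(tX)}$ be the flow of the fundamental vector field $X_M$; this is a smooth curve in $\Diff(M)$ in the sense of the footnote to Lemma~\ref{Lemma:HolonomyCurvature}. Because $G$ preserves the isomorphism class $[\cG]$, we have $[\phi_t^*\cG] = [\cG]$ for all $t$, so the holonomy is invariant: for any closed oriented surface $\Sigma$ and smooth $\sigma \colon \Sigma \to M$, the class $[\sigma_t^*\cG] = [\sigma^*\phi_t^*\cG]$ equals $[\sigma^*\cG]$ in $H^2(\Sigma,\Ab)$, whence $\mathrm{hol}_{\cG}(\Sigma,\sigma_T) = \mathrm{hol}_{\cG}(\Sigma,\sigma_0)$ for all $T$. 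Feeding this into the left-hand side of Equation~\eqref{eq:HolonomyVSCurvature} gives
\begin{equation*}
	\exp \int_{\Sigma \times [0,T]} \Phi^*\omega = 0 \in \Ab,
\end{equation*}
i.e.\ $\int_{\Sigma \times [0,T]} \Phi^*\omega$ lies in the kernel $\pi_1(\Ab_0)$ of $\exp$ for every $T$. Since this integral depends continuously on $T$ and vanishes at $T=0$, and the kernel is discrete, it must vanish identically; differentiating at $T=0$ and using Cartan's formula (the $\sigma_0$-pullback of $\int_{[0,T]}$ computes the fibre integral whose $t$-derivative is $\int_\Sigma \sigma^* i_{X_M}\omega$) yields $\int_\Sigma \sigma^* i_{X_M}\omega = 0$ for all closed surfaces $\Sigma$.

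To conclude I would promote this statement from $2$-forms on surfaces to the $1$-form criterion on loops. The cleaner route is to run the same argument directly with a closed loop: the transgression $\int_{S^1} c^* i_{X_M}\omega$ is exactly the infinitesimal version of the change in holonomy of the transgressed bundle $\tgress(\cG) \to LM$ along the path $t \mapsto L\phi_t(c)$, and preservation of $[\cG]$ forces the transgressed holonomy to be locally constant in $t$, so its derivative vanishes. The main obstacle is the passage from the \emph{surface} holonomy identity to the \emph{loop} integral: one must be careful that the exact-volume criterion for $i_{X_M}\omega$ is genuinely detected by $1$-cycles rather than $2$-cycles. This is where finite generation of $H_1(M,\Z)$ enters, guaranteeing via the universal coefficient theorem that $H^1_{\dR}(M,\R)$ is finite-dimensional and that a closed $1$-form pairing trivially against all of $H_1(M,\Z)$ is exact; the surface computation above is then used to verify that $i_{X_M}\omega$ indeed pairs trivially with every loop, completing the proof that the action is exact.
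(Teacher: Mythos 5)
Your middle paragraph is, in substance, the paper's own argument: fix a closed oriented surface $\Sigma$ and a smooth map $\sigma\colon\Sigma\to M$, move it by the flow coming from the group action, and combine Lemma~\ref{Lemma:HolonomyCurvature} with invariance of holonomy under the action to conclude $\int_\Sigma \sigma^*(i_{X_M}\omega)=0$ (your continuity-plus-discreteness argument for removing the $\exp$ is a nice touch, and more careful than the paper on that point). However, the way you frame and close the proof rests on a genuine degree error. Since $\omega\in\Omega^3(M,\ab)$, the contraction $i_{X_M}\omega$ is a \emph{2-form}: its class lives in $H^2_{\dR}(M,\ab)$, not $H^1_{\dR}(M,\ab)$, and its exactness is detected by pairing against $H_2(M,\R)$, not against loops. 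Consequently the ``cleaner route'' of your last paragraph is vacuous: for any loop $c\colon S^1\to M$, the pullback $c^*(i_{X_M}\omega)$ is a 2-form on the 1-dimensional manifold $S^1$, hence identically zero, so $\int_{S^1}c^*(i_{X_M}\omega)=0$ carries no information and cannot establish exactness. For the same reason, finite generation of $H_1(M,\Z)$ plays no role in this proposition; that hypothesis is used elsewhere in the paper (for the Fr\'echet--Lie group structure), not here.

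What is actually missing to complete your (correct) surface computation is the representability step that the paper takes from Thom: a basis of $H_2(M,\R)$ can be chosen to consist of classes of the form $\sigma_*[\Sigma]$ for closed oriented surfaces $\Sigma$ and smooth maps $\sigma\colon\Sigma\to M$ (Thom in the compact case, Conner--Floyd in the noncompact case). Granting this, your identity $\int_\Sigma\sigma^*(i_{X_M}\omega)=\langle[i_{X_M}\omega],\sigma_*[\Sigma]\rangle=0$ shows that the closed 2-form $i_{X_M}\omega$ pairs to zero with all of $H_2(M,\R)$, hence is exact by de Rham's theorem. If you delete the loop criterion of your first paragraph, replace your final paragraph by this appeal to Thom representability, and restate the target as the vanishing of $[i_{X_M}\omega]$ in $H^2_{\dR}(M,\ab)$, your proof becomes exactly the paper's.
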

\begin{proof}
It suffices to show that the class $[i_{X_M}\omega]\in H^2_{\mathrm{dR}}(M)$ pairs to zero with a basis 
$\tau_1, \ldots, \tau_{N}$ of  $H_2(M,\R)$. By a result of Thom, we may assume that $\tau_i = \sigma_*[\Sigma]$ for
a closed, oriented 2-manifold with a smooth map $\sigma \colon \Sigma \rightarrow M$.
(cf.~\cite[Corollary 2.30]{Thom1954}
for the compact and \cite[\S 9 and \S 15]{ConnerFloyd1964} for the noncompact case).

Let 
$t \mapsto g(t)$ be a smooth curve in $G$ that passes through the identity at $t=0$.
Define $\sigma_t \colon \Sigma \rightarrow M$ by $\sigma_t := \rho_{g(t)} \circ \sigma$, and define the flow $\Phi \colon \Sigma \times \R \rightarrow M$
by $\Phi(s,t) := \sigma_t(s)$.
By Lemma~\ref{Lemma:HolonomyCurvature}, we then have 
\[
\mathrm{hol}_{\mc{G}}(\Sigma, \sigma_t) - \mathrm{hol}_{\mc{G}}(\Sigma, \sigma_0) = \int_{\Sigma \times [0,t]} \Phi^*\omega.
\]
Since $\rho_{g(t)}$ preserves the isomorphism class of $\cG$, it preserves the holonomy as well. It follows that 
\[  {\textstyle \frac{d}{dt}}\mathrm{hol}_{\mc{G}}(\Sigma, \sigma_t) = \int_{\Sigma} \sigma^*(i_{X_M}\omega) = 0,\]
so $[i_{X_M}\omega]$ pairs to zero with $\sigma_*[\Sigma]$.
%
\end{proof}

This allows us to construct the central extension \eqref{eq:AlgExtension2} along the lines of \cite[Proposition~5.2]{DJNV21}, 
cf.~also \cite{Ro95, Ne05}. 
Set $\ol{\Omega}{ }^{1}(M,\ab) := \Omega^{1}(M,\ab)/d\Omega^0(M,\ab)$,
and denote the class of $\psi \in \Omega^{1}(M,\lie{z})$ by $\overline{\psi} \in \ol{\Omega}{ }^{1}(M,\ab)$.
Since a 1-form is exact if and only if it integrates to zero against every $1$-cycle, the subspace $d\Omega^0(M) \subseteq \Omega^1(M)$
is closed, and $\ol{\Omega}{ }^{1}(M,\ab)$ is a Fr\'echet space with the quotient topology.

Since $i_{X_{M}}\omega$ is exact, we can thus define the Fr\'echet space
\begin{equation}\label{eq:LAcentralextension}
	\widehat{\fg}_{\omega} := \{(X,\ol{\psi}_{X}) \in \fg \times \ol{\Omega}{}^1(M,\ab)\,;\, i_{X_{M}} \omega = d\psi_{X}\}.
\end{equation}
The Lie bracket on $\widehat{\lie{g}}_{\omega}$ is given by
\begin{equation}\label{eq:definitionbracket}
	[(X,\ol{\psi}_{X}), (Y, \ol{\psi}_{Y})] = ([X,Y], \ol{i_{X_{M}} i_{Y_{M}}\omega}).
\end{equation}
It is skew-symmetric by definition, and the Jacobi identity 
follows from a straightforward computation \cite[Proposition 5.2]{DJNV21}.
The map $\widehat{\fg}_{\omega} \rightarrow \fg$ is the projection onto the first factor, 
and its kernel is of course precisely $H^1_{\dR}(M,\ab)$.

\subsection{The Fr\'echet--Lie group extension}

We now equip $\widehat{G}_{\mc{G}}$ with the structure of Fr\'{e}chet--Lie group in two steps:
\begin{enumerate}\itemsep-.25em
	\item We find a normal subgroup $\widehat{N} \subseteq \widehat{G}_{\mc{G}}$ and equip that group with a Lie group structure.
	\item We show that the conjugation action of $\widehat{G}_{\mc{G}}$ on $\widehat{N}$ is smooth, which allows us to transfer the charts of $\widehat{N}$ to the entire group $\widehat{G}_{\mc{G}}$.
\end{enumerate}

Let $N \subseteq G$ be the open subgroup that acts trivially on $\pi_0(LM)$.
Since this group is normal, the restriction of $\widehat{G}_{\mc{G}}$ to $N$, denoted by $\widehat{N}$, is normal as well.
Our goal, now, is to equip $\widehat{N}$ with a Lie group structure.

Before proceeding with this, we introduce some notation.
If $\gamma \in LM$, we write $[\gamma] \in \pi_{1}(M)$, $\langle \gamma \rangle \in \pi_{0}(LM)$ and $[[\gamma]] \in H_{1}(M,\Z)$ for the corresponding elements.
Since the connected components of $LM$ correspond to conjugacy classes in $\pi_1(M)$, the map
$\pi_1(M) \rightarrow \pi_0(LM)$ is a surjective class function.  In particular, the quotient map $\pi_{1}(M) \rightarrow \pi_{1}(M)/[\pi_{1}(M),\pi_{1}(M)] = H_{1}(M,\Z)$ factors through $\pi_{0}(LM)$.  If $\langle \gamma \rangle \in \pi_{0}(LM)$, we can therefore unambiguously write $[[\gamma]] \in H_{1}(M,\Z)$.

Let $\langle \gamma \rangle \in \pi_{0}(LM)$, and view $\langle \gamma \rangle$ as a subset of $LM$.
By definition, the action of $N$ on $LM$ restricts to an action on $\comp{\gamma}$.
We define $N^{\sharp}_{\comp{\gamma}}$ to be the group consisting of 
pairs $(n,\alpha)$ with $n\in N$, and $\alpha$ an automorphism of $\tgress(\mc{G})|_{\comp{\gamma}} \rightarrow \comp{\gamma}$ 
that preserves $\nabla$, and covers the action $L\rho_{n}$ of $n$ on $\comp{\gamma}$,
\begin{equation}
	\begin{tikzcd}
		\mathscr{T}(\mc{G})|_{\comp{\gamma}} \ar[r, "\alpha"] \ar[d] & \mathscr{T}(\mc{G})|_{\comp{\gamma}} \ar[d] \\
		\comp{\gamma} \ar[r,"L\action_{n}"] & \comp{\gamma}.
	\end{tikzcd}
\end{equation}
The following proposition is a slight variation on \cite[\S 5]{DJNV21}, where the Kostant-Kirillov-Souriau extensions for certain 
prequantum line bundles over nonlinear Grassmannians were constructed.

\begin{proposition}\label{Prop:vasteklasse}
	Let $G$ be a locally convex Lie group with an action on a connected manifold $M$ that preserves the isomorphism class of 
	a bundle gerbe $\cG$ on $M$.
	Then, for every connected component of $LM$, we have a unique central extension of locally convex Lie groups
	\begin{equation}\label{eq:NiLiegroep}
		\Ab \rightarrow N_{\comp{\gamma}}^{\sharp} \rightarrow N
	\end{equation}
	such that the action of $N^{\sharp}_{\comp{\gamma}}$ on $\tgress(\mc{G})|_{\comp{\gamma}}$ smoothly covers the action of $N_{\gamma}$
	on $\comp{\gamma}$.
	The corresponding Lie algebra cocycle $\psi \colon \fn \times \fn \rightarrow \ab$ is given by
	\begin{equation}\label{eq:Nicocycle}
		\psi(X,Y) = \oint_{\gamma} i_{X_M}i_{Y_M}\omega,
	\end{equation}
	where $X_M$ and $Y_M$ are the fundamental vector fields of $X,Y \in \fn$ on $M$, and $\omega \in \Omega^3(M, \ab)$ is the curvature of $\cG$.
\end{proposition}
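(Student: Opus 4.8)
The plan is to realize $N^{\sharp}_{\comp{\gamma}}$ as the Kostant--Kirillov--Souriau central extension attached to the prequantum $\Ab$-bundle $(\tgress(\mc{G})|_{\comp{\gamma}},\nabla)$ over the connected Fréchet manifold $\comp{\gamma}\subseteq LM$, and to run the construction of \cite[\S5]{DJNV21} (see also \cite{NV03}). By definition $N^{\sharp}_{\comp{\gamma}}$ is exactly the group of $\nabla$-preserving bundle automorphisms covering the $N$-action on $\comp{\gamma}$, so it is this KKS extension, with kernel $H^{0}(\comp{\gamma},\Ab) = \Ab$ (as $\comp{\gamma}$ is connected). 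Since $N$ acts by post-composition, $L\rho_{n}(\gamma) = \rho_{n}\circ\gamma$, the fundamental vector field of $X\in\fn=\fg$ is $X_{LM}(\gamma) = X_{M}\circ\gamma$, and the transgression connection $\nabla$ has curvature $\Omega = \int_{S^{1}}\ev^{*}\omega\in\Omega^{2}(LM,\ab)$.

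First I would verify the two hypotheses needed to run the KKS machine. That the $N$-action preserves the isomorphism class of $(\tgress(\mc{G})|_{\comp{\gamma}},\nabla)$ follows from the standing assumption that $\rho$ preserves $[\mc{G}]$ together with the functoriality of transgression (Proposition~\ref{prop:transgressionAndPullback}): an isomorphism $\rho_{n}^{*}\mc{G}\cong\mc{G}$ transgresses to a $\nabla$-preserving isomorphism $(L\rho_{n})^{*}\tgress(\mc{G})\cong\tgress(\mc{G})$, which restricts to $\comp{\gamma}$ because $n\in N$ fixes this component. The essential hypothesis is \emph{exactness}: for each $X\in\fn$ the $1$-form $i_{X_{LM}}\Omega$ must be exact on $\comp{\gamma}$. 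By the preceding Proposition, the invariance of $[\mc{G}]$ yields a primitive $i_{X_{M}}\omega = d\beta_{X}$ with $\beta_{X}\in\Omega^{1}(M,\ab)$; setting $F_{X}(\gamma):=\oint_{\gamma}\beta_{X}$, a smooth $\ab$-valued function on $\comp{\gamma}$ depending linearly and continuously on $X$, the first-variation formula $\tfrac{d}{ds}\big|_{0}\oint_{\gamma_{s}}\beta_{X} = \oint_{\gamma}i_{v}\,d\beta_{X}$ (with $v=\tfrac{d}{ds}\big|_{0}\gamma_{s}$) gives $dF_{X} = i_{X_{LM}}\Omega$. Thus $F_{X}$ is a smooth Hamiltonian for the fundamental vector field $X_{LM}$.

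With these inputs the locally convex Lie group structure is produced as in \cite[\S5]{DJNV21}: the Hamiltonian $F_{X}$ and the connection $\nabla$ determine a prequantum vector field on $\tgress(\mc{G})|_{\comp{\gamma}}$ — the horizontal lift of $X_{LM}$ corrected by the vertical term $F_{X}$ — which is $\Ab$-invariant, preserves $\nabla$, and covers $X_{LM}$; its flow therefore lies in $N^{\sharp}_{\comp{\gamma}}$. Letting $X$ range over $\fn$ yields a smooth local section of $N^{\sharp}_{\comp{\gamma}}\to N$ over a neighbourhood of the identity, hence a chart, and one checks that the group operations are smooth in these charts. The extension is central because $\Ab$ acts on the total space commuting with every bundle automorphism. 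I expect \emph{this analytic step} — exhibiting the smooth family of lifts and verifying smoothness of multiplication and inversion — to be the main obstacle; it hinges on the regularity of the transgressed bundle over $\comp{\gamma}$ and on the continuity of $X\mapsto F_{X}$, but it is entirely parallel to the prequantum-line-bundle situation of \cite{DJNV21}. Uniqueness is then immediate: $N^{\sharp}_{\comp{\gamma}}$ acts effectively on the total space, so there is at most one locally convex Lie group structure for which this action is smooth.

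Finally I would identify the Lie algebra cocycle of \eqref{eq:NiLiegroep}. It is the constant $\psi(X,Y)=\{F_{X},F_{Y}\}-F_{[X,Y]}\in\ab$, where $\{F_{X},F_{Y}\}=\Omega(X_{LM},Y_{LM})$ is again a Hamiltonian for $[X,Y]_{LM}$. From $\rho_{g}^{*}\omega=\omega$ we have $\mathcal{L}_{X_{M}}\omega=0$, and with $d\omega=0$ Cartan's formula gives $i_{[X_{M},Y_{M}]}\omega = d\bigl(i_{X_{M}}i_{Y_{M}}\omega\bigr)$; hence $i_{X_{M}}i_{Y_{M}}\omega$ is itself a primitive of $i_{[X,Y]_{M}}\omega$, and the transgression formula for $\Omega$ identifies $\{F_{X},F_{Y}\}$ with $\gamma\mapsto\oint_{\gamma}i_{X_{M}}i_{Y_{M}}\omega$. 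The remaining term $F_{[X,Y]}$ contributes, at the base loop $\gamma$, the coboundary $\ell([X,Y])$ of the linear functional $\ell(Z):=F_{Z}(\gamma)$; normalising the Hamiltonians to vanish at $\gamma$ (which leaves the Poisson brackets unchanged) removes it, and the cocycle becomes
\begin{equation*}
	\psi(X,Y) = \oint_{\gamma} i_{X_{M}}i_{Y_{M}}\omega,
\end{equation*}
as claimed in \eqref{eq:Nicocycle} (up to the fixed sign conventions relating curvature, Poisson bracket and cocycle). Skew-symmetry and the cocycle identity for this expression follow directly from the total antisymmetry and closedness of $\omega$.
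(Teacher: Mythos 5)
Your overall strategy---realizing $N^{\sharp}_{\comp{\gamma}}$ as the KKS extension attached to the transgressed bundle $(\tgress(\cG)|_{\comp{\gamma}},\nabla)$ and invoking the machinery of \cite{NV03, DJNV21}---is the same as the paper's, and several of your steps match it exactly: the holonomy/isomorphism-class preservation via functoriality of transgression (Proposition~\ref{prop:transgressionAndPullback}), the exactness of $i_{X_{LM}}\Omega$ obtained from a primitive of $i_{X_M}\omega$, and the identification of the cocycle at a base loop using the curvature formula $\Omega_{\gamma}(u,v)=\oint_{S^1}i_u i_v\omega$ of \cite{Wa16}. The genuine gap is precisely in the step you flag as the analytic heart. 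You propose to build a chart of $N^{\sharp}_{\comp{\gamma}}$ near the identity by integrating the prequantum lifts $\widehat{X}$ (horizontal lift of $X_{LM}$ corrected by the Hamiltonian $F_X$) and ``letting $X$ range over $\fn$'' to obtain a smooth local section of $N^{\sharp}_{\comp{\gamma}}\rightarrow N$ over an identity neighbourhood. But the projections to $N$ of the time-one flows of these lifts are exactly the points $\exp_N(X)$, so this construction yields a section only over the image of the exponential map of $N$. For the groups this proposition is applied to---$N$ is a diffeomorphism-type Fr\'echet--Lie group such as $\Diff_{\ex}(M,\mu)$ or a quantomorphism group---the exponential map is not locally surjective (there is no inverse function theorem), so its image need not contain \emph{any} identity neighbourhood, and no chart is produced. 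This is not a detail that ``parallels'' \cite{DJNV21}: neither \cite{NV03} nor \cite{DJNV21} constructs the smooth structure this way.

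The paper's route around this is to transport the manifold structure from the pullback of $\tgress(\cG)|_{\comp{\gamma}}\rightarrow\comp{\gamma}$ along the smooth orbit map $N\rightarrow\comp{\gamma}$, $n\mapsto\rho_n\circ\gamma$. Holonomy preservation (a group-level hypothesis, not merely infinitesimal exactness) is what identifies this pullback bundle with $N^{\sharp}_{\comp{\gamma}}$ set-theoretically: a connection-preserving lift of $\rho_n$ is determined by its value at a single point of the fibre over $\gamma$, and exists with any prescribed value by holonomy invariance; smoothness of the group operations is then the content of Theorem~\ref{Thm:NeebVizman2003adaptation}, the paper's adaptation of \cite[Theorem A.1]{DJNV21} to non-connected structure group $\Ab$---a point your sketch also passes over, since flows only ever reach the identity component $\Ab_0$. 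Two smaller defects: your uniqueness argument (``$N^{\sharp}_{\comp{\gamma}}$ acts effectively, hence at most one smooth structure'') is false as a general principle, since a group with the discrete topology can act smoothly and effectively; uniqueness instead follows by comparing any admissible Lie group structure with the pullback-bundle structure through the smooth orbit map, a smooth bijective morphism of principal $\Ab$-bundles covering the identity being automatically an isomorphism. The cocycle computation itself, including the normalization $F_Z(\gamma)=0$, is correct and agrees with the paper's citation of \cite[Remark 3.5]{NV03}.
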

The proof uses the following minor variation of \cite{NV03}.
Let $\Ab$ be a finite-dimensional abelian Lie group, let 
$\cP \rightarrow \cM$ be a principal $\Ab$-bundle over a connected locally convex manifold $\cM$, let $\nabla$ be a principal 
connection with curvature $\Omega \in \Omega^2(\cM, \ab)$,
and let $G$ be a locally convex Lie group with a smooth action on $\cM$ that preserves the holonomy.

\begin{theorem}\label{Thm:NeebVizman2003adaptation}
The group $G^{\sharp}$ of automorphisms of $(\cP, \nabla)$ that cover the action of $G$ on $\cM$ carries a locally convex Lie group
structure for which the action on $\cP$ is smooth.
\end{theorem}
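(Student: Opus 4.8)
The plan is to construct $G^\sharp$ explicitly as a group and then equip it with a Lie group structure by realizing it as an extension of $G$ by the abelian Lie group $Z$, following the Kostant--Kirillov--Souriau construction of \cite{NV03}. The key point is that an automorphism of $(\cP,\nabla)$ covering the identity on $\cM$ is precisely a gauge transformation preserving the connection, and since $\cM$ is connected and $\nabla$ has curvature $\Omega$, such a transformation is given by a locally constant $Z$-valued function, hence an element of $Z$ (using connectedness). This identifies the kernel of the projection $G^\sharp \to G$ with $Z$, so that as a group $G^\sharp$ is an abelian extension
\begin{equation*}
	Z \rightarrow G^\sharp \rightarrow G.
\end{equation*}
The central observation is that since $G$ preserves the holonomy of $(\cP,\nabla)$, for every $g\in G$ the pulled-back bundle $\rho_g^*\cP$ is isomorphic to $\cP$ as a bundle with connection, so connection-preserving lifts exist and $G^\sharp \to G$ is surjective.

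First I would fix a reference point and a local trivialization strategy. To build charts, I would choose, over a suitable neighborhood of the identity in $G$, a smooth family of connection-preserving isomorphisms $\rho_g^*\cP \xrightarrow{\sim} \cP$; concretely, one uses parallel transport along a smooth family of paths (or a local smooth section of $G \to G/\text{stab}$) to produce a canonical lift $g \mapsto \sigma(g)$ of a neighborhood of the identity into $G^\sharp$. This gives a local section of $G^\sharp \to G$ near the identity, and combined with the identification of the kernel with $Z$, it furnishes a bijection from $U \times Z$ onto a neighborhood of the identity in $G^\sharp$, where $U \subseteq G$ is open. I would then use this to define the smooth structure, and transport charts around by left translation. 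The group multiplication in these coordinates is governed by a $Z$-valued group cocycle $f \colon G \times G \to Z$ measuring the failure of $\sigma$ to be a homomorphism; the task reduces to showing this cocycle is smooth near the identity.

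The heart of the argument, and the main obstacle, is verifying smoothness of the cocycle $f$, equivalently smoothness of group multiplication in the charts. The cocycle compares the lift $\sigma(gh)$ with $\sigma(g)\cdot \rho_g^*$-transported $\sigma(h)$, and its value is computed as a holonomy-type integral of the connection along a family of loops or surfaces swept out by the $G$-action; smoothness then follows from smoothness of the $G$-action on $\cM$ together with smooth dependence of parallel transport and holonomy on the data, which is where the local convexity and the smoothness assumptions on the action are essential. Once $f$ is smooth, the standard machinery (e.g.\ as in \cite{NV03}) shows that $G^\sharp$ is a locally convex Lie group, that $Z \to G^\sharp \to G$ is an extension of Lie groups, and that the tautological action of $G^\sharp$ on $\cP$ is smooth because it is assembled from the smooth action of $G$ on $\cM$ and the smooth lifts. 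I would finish by noting uniqueness: any two Lie group structures making the action on $\cP$ smooth must agree, since the action determines the local sections and hence the charts.
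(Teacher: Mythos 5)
Your proposal takes a genuinely different route from the paper, and it has a gap at exactly the point where this theorem has content. The paper does not redo the Kostant--Kirillov--Souriau construction at all: its proof is a reduction to the \emph{connected} structure group case, which is quoted as a black box. Concretely, the paper forms the quotient $\Sigma := \cP/\Ab_0$ by the identity component $\Ab_0 \subseteq \Ab$, observes that $\Sigma \rightarrow \cM$ is a discrete principal $\pi_0(\Ab)$-bundle (so the group $G'$ of its automorphisms covering the $G$-action is a Lie group with discrete kernel over $G$), applies \cite[Theorem A.1]{DJNV21} to the bundle $\cP \rightarrow \Sigma$, whose structure group $\Ab_0$ \emph{is} connected, and finally identifies $G^{\sharp}$ with an open subgroup of the resulting Lie group $(G')^{\sharp}$. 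Your proposal instead rebuilds the extension directly: kernel $\cong \Ab$ by connectedness of $\cM$, surjectivity from holonomy preservation, local sections via parallel transport, local smoothness of the resulting cocycle, and standard gluing machinery. That outline is essentially the proof of \cite[Theorem A.1]{DJNV21} (refining \cite{NV03}), so if completed it would be self-contained where the paper is citation-based.

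The gap is that you never confront the disconnectedness of $\Ab$, which is the entire reason this statement appears as a separate ``minor variation'' of \cite{NV03} rather than as a citation. Here $\Ab$ is an arbitrary finite-dimensional abelian Lie group, possibly with $\pi_0(\Ab) \neq 1$, whereas the results you lean on for the ``heart of the argument'' --- \cite{NV03} and the smoothness of the cocycle via ``holonomy-type integrals'' --- are formulated for connected $\Ab \cong \ab/\Gamma$, where the exponential map is surjective and holonomies are computed by integrating the curvature. So at the one step where your argument must do real work (smoothness of the cocycle, and passing from a locally smooth cocycle to a Lie group structure on an extension by a possibly disconnected group), you invoke theorems that do not cover the stated generality; as written the argument is circular precisely there. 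The gap is fixable along your lines: for $g,h$ near the identity the cocycle is the holonomy of a small, contractible loop (contained in a chart around the base point), hence it automatically takes values in $\Ab_0$ and is given by a curvature integral, so the local analysis of \cite{NV03} applies verbatim; one must then still argue that the locally smooth $\Ab_0$-valued cocycle induces a Lie group structure on the full extension by $\Ab$ (which works because the extension is central --- a fact you use implicitly but should verify from $\Ab$-equivariance of the automorphisms). Alternatively, one makes the paper's quotient move. Either way, this step must be addressed explicitly; your closing uniqueness claim, by contrast, is unsubstantiated but also not required by the statement.
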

\begin{proof}
We reduce this to the case where $\Ab$ is connected, which is precisely \cite[Theorem A.1]{DJNV21}. 
The quotient $\Sigma := \cP/\Ab_0$ of $\cP$ by the connected identity component  $\Ab_0 \subseteq \Ab$ is a discrete principal 
$\pi_0(\Ab)$-bundle over $\cM$, so the group $G'$ of bundle automorphisms of $\Sigma \rightarrow \cM$ that cover the action of $G$ on $\cM$
is a locally convex Lie group. 
Since $\cP \rightarrow \Sigma$ is a principal fibre bundle with connected structure group $\Ab_0$, it follows from \cite[Theorem A.1]{DJNV21} that the group $(G')^{\sharp}$ 
of connection-preserving automorphisms of $\cP \rightarrow \Sigma$ that cover the $G'$-action on $\Sigma$ is a locally convex Lie group 
with a smooth action on $\cP$. The group $G^{\sharp}$ of automorphisms of $\cP \rightarrow \cM$ that cover the $G$-action on $\cM$ 
is a subgroup of $(G')^{\sharp}$, and the composition of the inclusion $\iota \colon G^{\sharp} \hookrightarrow (G')^{\sharp}$ with the 
projection $\pi \colon (G')^{\sharp} \twoheadrightarrow G'$ is a homomorphism $h \colon G^{\sharp} \rightarrow G'$ that covers the surjection $G^{\sharp} \twoheadrightarrow G$. So $h(G^{\sharp}) \subseteq G'$ is an open subgroup, and $\pi^{-1}(h(G^{\sharp})) = \iota(G^{\sharp})$ is an open subgroup of 
$(G')^{\sharp}$.
\end{proof}

\begin{proof}[Proof of Proposition~\ref{Prop:vasteklasse}]
Since $\comp{\gamma} \subseteq LM$ is a connected locally convex manifold, and since the action of the (not necessarily connected) Lie group $N$ 
on $\comp{\gamma}$ preserves the holonomy of $\mathscr{T}(\mc{G})|_{\comp{\gamma}}$, it follows from
Theorem~\ref{Thm:NeebVizman2003adaptation} that 
\eqref{eq:NiLiegroep} is a central extension of locally convex Lie groups.
The manifold structure is obtained from the pullback of $\mathscr{T}(\mc{G})|_{\comp{\gamma}} \rightarrow \comp{\gamma}$ along an orbit map
$N \rightarrow \comp{\gamma} \colon n \mapsto \action_{n}\circ \gamma$, and does not depend on the choice of representative $\gamma$ by 
\cite[Lemma 3.2]{NV03}. By \cite[Remark 3.5]{NV03}, the corresponding Lie algebra cocycle is given by evaluating the curvature 
$\Omega$ of the transgressed line bundle in the fundamental vector fields on $\comp{\gamma}$ at a fixed loop $\gamma$ 
(this is the usual Kostant-Kirillov-Souriau extension \cite{Ko70}).
If $\cG$ has curvature $\omega\in \Omega^3(M, \ab)$, then by \cite[Proposition 4.3.2]{Wa16} the transgressed line bundle on $LM$ has curvature 
\[
\Omega_{\gamma}(u,v) = \oint_{S^1}i_{u}i_{v}\omega,
\]
where $v,w$ are tangent vectors in $T_{\gamma}LM = \Gamma(\gamma^*TM)$. Plugging the fundamental 
vector fields of $N$ on $LM$ into this $2$-form yields \eqref{eq:Nicocycle}.
\end{proof}
\begin{remark}
We needed $M$ to be a manifold because the construction of the transgressed line bundle in \cite{Wa16} uses diffeological spaces, 
and porting these results to the category of manifolds uses the fact that the category of smooth manifolds embed fully faithfully 
into the category of diffeological spaces. The Lie group $G$ need only be locally convex
at this point, but we will require it to be Fr\'echet later on.
\end{remark}

Let $\langle \gamma \rangle \in \pi_{0}(LM)$ be an arbitrary connected component of $LM$.
Since $N$ preserves $\langle \gamma \rangle$, 
every $\widehat{n} \in \widehat{N}$ gives rise to an automorphism
of $\mathscr{T}(\mc{G})|_{\comp{\gamma}} \rightarrow \comp{\gamma}$ that preserves the connection $\nabla$.
We therefore obtain a homomorphism $\widehat{N} \rightarrow N^{\sharp}_{\comp{\gamma}}$, and 
we denote its image by $\widehat{N}_{\comp{\gamma}}$.
The images of $\widehat{n}$ in groups $N^{\sharp}_{\comp{\gamma}}$ for various $\langle \gamma \rangle$ are related by the fact that $\widehat{n}$ preserves the fusion product.
We will use this to show that for every component $\comp{\gamma}$, the group $\widehat{N}_{\comp{\gamma}}$ is an embedded Lie subgroup of $N_{\comp{\gamma}}^{\sharp}$.

Since any $k\in K$ covers the identity $\one \in G$, it acts by a vertical automorphism
$\alpha_k$ on $(\mathscr{T}(\mc{G}),\nabla)$. Since $\alpha_{k}$ preserves the connection, it is given by a locally constant function $\Lambda$ on $LM$ with values 
in $\Ab$.
Since the connected components of $LM$ correspond to conjugacy classes in $\pi_1(M)$, we can consider $\Lambda$ as 
a class function $\Lambda \colon \pi_1(M) \rightarrow \Ab$.
If $p_{\gamma} \in \mathscr{T}(\mc{G})$ is an element of the $\Ab$-bundle over $\gamma \in LM$, we thus have
\begin{equation}
	\alpha_k (p_{\gamma}) = p_{\gamma} \cdot \Lambda([\gamma]).
\end{equation}
Using Proposition \ref{prop:GroupTransgression}, we identify the group of vertical automorphisms of $\mathscr{T}(\mc{G}) \rightarrow LM$ 
that preserve the fusion product with $K$, (in particular $K \subseteq \widehat{N}$); moreover we identify the group of vertical automorphisms of 
the transgressed line bundle $\mathscr{T}(\mc{G})|_{\comp{\gamma}} \rightarrow \comp{\gamma}$ with $\Ab$.
\begin{lemma}\label{lem:verticaal}
	 Under the isomorphism of $K$ with $H^1(M, \Ab)\simeq \Hom(H_1(M,\Z), \Ab)$, 
	 the restriction of the map $\widehat{N} \rightarrow N^{\sharp}_{\comp{\gamma}}$ to 
	 $K\subseteq \widehat{N}$ corresponds to evaluation 
	\[\ev_{[[\gamma]]} \colon \Hom(H_1(M,\Z), \Ab) \rightarrow \Ab\]
	at the homology class $[[\gamma]] \in H_1(M,\Z)$ defined by the loop $\gamma$.
\end{lemma}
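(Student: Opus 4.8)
The plan is to unwind the two identifications appearing in the statement and compute directly. Fix $k\in K$ and let $\beta\in\Hom(H_1(M,\Z),\Ab)$ be the corresponding homomorphism under $K\cong H^1(M,\Ab)\cong\Hom(H_1(M,\Z),\Ab)$. By Proposition~\ref{prop:GroupTransgression} we have $\tgress_L(k)=(\1,\overline{\beta})$, so $k$ acts on $\tgress(\mc{G})$ by the vertical automorphism $\overline{\beta}$, which is fibrewise the multiplication $p_\gamma\mapsto p_\gamma\cdot\beta([\gamma])^{-1}$. The key observation is that $\beta([\gamma])$ is the holonomy along $\gamma$ of the flat $\Ab$-bundle determined by $\beta$; since the holonomy of a flat abelian bundle factors through abelianization, it depends only on the class $[[\gamma]]\in H_1(M,\Z)$ and equals $\beta([[\gamma]])$.

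Next I would invoke the fact, recorded just above the statement, that $[[\gamma']]\in H_1(M,\Z)$ is constant as $\gamma'$ ranges over the connected component $\comp{\gamma}\subseteq LM$ (because $\pi_0(LM)$ is the set of conjugacy classes in $\pi_1(M)$, and conjugate elements have the same image in the abelianization). Consequently the restriction of $\overline{\beta}$ to $\tgress(\mc{G})|_{\comp{\gamma}}$ is multiplication by the single constant $\beta([[\gamma]])\in\Ab$, independent of the representative loop. This is consistent with the earlier description of $\alpha_k$ by a locally constant class function $\Lambda\colon\pi_1(M)\to\Ab$: on the component $\comp{\gamma}$ this function is constant, with value $\beta([[\gamma]])^{\pm1}$.

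Finally, the homomorphism $\widehat{N}\to N^{\sharp}_{\comp{\gamma}}$ sends $k$ to the restriction of its action to $\comp{\gamma}$, which by the previous paragraph is the vertical automorphism given by multiplication by $\beta([[\gamma]])$. Under the identification of the group of connection-preserving vertical automorphisms of $\tgress(\mc{G})|_{\comp{\gamma}}\to\comp{\gamma}$ with $\Ab$—valid because the base is connected, so any such automorphism is multiplication by a constant—this image is exactly $\beta([[\gamma]])=\ev_{[[\gamma]]}(\beta)$, as claimed. The one point requiring care is the bookkeeping of conventions: the inverse appearing in the fibrewise formula of Proposition~\ref{prop:GroupTransgression} and the chosen identification of vertical automorphisms with $\Ab$ must be matched so that the composite is $\ev_{[[\gamma]]}$ rather than its inverse. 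Everything else is a direct unwinding of the definitions, so I do not expect a genuine obstacle here.
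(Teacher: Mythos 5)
Your proof is correct, but it takes a genuinely different route from the paper's. You treat the identification $K \cong H^1(M,\Ab)\cong\Hom(H_1(M,\Z),\Ab)$ as given (from Proposition~\ref{prop:TheKernel}) and then restrict the explicit fibrewise formula $\tgress_L(\beta)=(\1,\overline{\beta})$, $p_\gamma \mapsto p_\gamma\cdot\beta([\gamma])^{-1}$, of Proposition~\ref{prop:GroupTransgression} to the component $\comp{\gamma}$, using that a homomorphism into an abelian group kills commutators and hence only sees $[[\gamma]]$. The paper instead argues from scratch inside the transgression picture: it starts with an arbitrary connection-preserving vertical automorphism, which is given by a class function $\Lambda\colon\pi_1(M)\rightarrow\Ab$, and shows that preservation of the fusion product is precisely the closedness condition $\Lambda([\gamma_{13}])=\Lambda([\gamma_{12}])\Lambda([\gamma_{23}])$, i.e.\ that $\Lambda$ is a group homomorphism; under that identification, restriction to $\comp{\gamma}$ is tautologically evaluation. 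What the paper's route buys is twofold: it re-derives the isomorphism $K\cong\Hom(H_1(M,\Z),\Ab)$ independently (the remark following the lemma notes this gives an alternative proof of Proposition~\ref{prop:TheKernel}), and it makes explicit where the fusion structure enters — without fusion-preservation the vertical automorphism group would be the larger group of class functions on $\pi_1(M)$, not just the homomorphisms. What your route buys is brevity, since Proposition~\ref{prop:GroupTransgression} already encodes that information; the price is exactly the convention bookkeeping you flag at the end: the paper's $\Lambda$ and the $\overline{\beta}$ of Proposition~\ref{prop:GroupTransgression} differ by the inversion $z\mapsto z^{-1}$ of $\Ab$, a natural automorphism that the identification of vertical automorphisms with $\Ab$ must absorb, and which in any case does not affect the later uses of the lemma (the image $\Gamma_{\comp{\gamma}}$, the kernel, discreteness).
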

\begin{proof} 
	A vertical automorphism $\alpha$ of $(\mathscr{T}(\mc{G}), \nabla)$ corresponds to a 1-cochain $\Lambda \colon \pi_1(M) \rightarrow \Ab$
	that is constant on conjugacy classes, $\alpha(p_{\gamma}) = p_{\gamma} \Lambda([\gamma])$.
	It preserves the fusion structure $\mu$ if and only if
	\[\mu(p_{12}\otimes p_{23}) = \Lambda([\ol{\gamma}_{3}*\gamma_1])^{-1}\mu(p_{12}\Lambda([\ol{\gamma}_{2}*\gamma_1])\otimes p_{23}\Lambda([\ol{\gamma}_{3}*\gamma_2]))\]
	for all $p_{12}\otimes p_{23}$ over $(\gamma_1,\gamma_2,\gamma_3) \in PM^{[3]}$.
	This is the case if and only if
	\be\label{eq:gesloten}
	\delta \Lambda ([\gamma_{23}],[\gamma_{13}],[\gamma_{12}]) := \Lambda([\gamma_{13}])^{-1}\Lambda([\gamma_{12}])\Lambda([\gamma_{23}]) = 1
	\ee
	for all $[\gamma_{ij}] = [\ol{\gamma}_{j}*\gamma_i] \in \pi_1(M)$, that is, if $\Lambda$ is a closed $1$-cochain.
	This is the same as a group homomorphism $\Lambda \colon \pi_1(M) \rightarrow \Ab$.
	Since a group homomorphism from $\pi_1(M)$ into the abelian group $\Ab$ is automatically constant on conjugacy classes,
	we obtain all group homomorphisms in this way.
	It follows that the group of vertical automorphisms of $\mu$ is
	\begin{eqnarray*}
		H^1(\pi_1(M),\Ab) &=& \Hom(\pi_1(M),\Ab) = \Hom\Big(\pi_1(M)/[\pi_1(M),\pi_1(M)],\Ab\Big)\\
				    &=& \Hom(H_1(M,\Z),\Ab) = H^1(M,\Ab),
	\end{eqnarray*}
	as required. 
	Since the commutator subgroup $[\pi_1(M),\pi_1(M)]$ is contained in the kernel of $\Lambda$, evaluating $\Lambda$ in the conjugacy class of 
	$[\gamma] \in \pi_1(M)$ is the same as evaluating it in the homology class $[[\gamma]] \in H_1(M,\Z) \simeq \pi_1(M)/[\pi_1(M),\pi_1(M)]$.
\end{proof}
\begin{remark}
In fact, this yields an alternative proof of Proposition~\ref{prop:TheKernel}.
\end{remark}

Suppose that $H_1(M,\Z)$ is finitely generated.
If $\Ab$ is connected and $[[\gamma]]$ is of infinite order, then 
the image $\Gamma_{\comp{\gamma}}$ of the evaluation map $\ev_{[[\gamma]]} \colon K \rightarrow \Ab$
is all of $\Ab$. 
In this case $\widehat{N}_{\comp{\gamma}} = N^{\sharp}_{\comp{\gamma}}$ is a Lie group by Proposition~\ref{Prop:vasteklasse}.
If, on the other hand, $[[\gamma]]\in H_1(M,\Z)$ is of finite order $k$,
then the image $\Gamma_{\comp{\gamma}}$ of the evaluation map is a discrete subgroup of $\Ab$.
We show that in this case too, the group $\widehat{N}_{\comp{\gamma}}$ has a natural Lie group structure 
that makes 
\begin{equation}\label{eq:weereenrijtje}
	\Gamma_{\comp{\gamma}} \rightarrow \widehat{N}_{\comp{\gamma}} \rightarrow N
\end{equation}
into a central extension of Lie groups.	

\begin{lemma}\label{lem:homomorphisms}
Denote by $\Ab_{k} \subseteq \Ab$ the subgroup of elements with $z^k = e$ if $k >0$ is finite, and set $\Ab_{\infty} := \Ab$ for $k = \infty$.
If $[[\gamma]]$ is of order $k \in \N^{>0} \cup \{\infty\}$, then $\Gamma_{\comp{\gamma}} \subseteq \Ab_{k}$ and 
$\Gamma_{\comp{\gamma}} \cap \Ab_0  = \Ab_k \cap \Ab_0$. In particular, $\Gamma_{\comp{\gamma}} = \Ab_k$ if $\Ab$
is connected.
\end{lemma}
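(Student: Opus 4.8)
The plan is to treat this as a purely algebraic statement about the image $\Gamma_{\comp{\gamma}}$ of the evaluation homomorphism $\ev_{[[\gamma]]} \colon \Hom(H_1(M,\Z),\Ab) \to \Ab$, $\Lambda \mapsto \Lambda([[\gamma]])$, and to establish the two inclusions $\Gamma_{\comp{\gamma}} \subseteq \Ab_k$ and $\Ab_k \cap \Ab_0 \subseteq \Gamma_{\comp{\gamma}}$ separately. Together these give the claimed identity $\Gamma_{\comp{\gamma}} \cap \Ab_0 = \Ab_k \cap \Ab_0$, since the first inclusion also yields $\Gamma_{\comp{\gamma}} \cap \Ab_0 \subseteq \Ab_k \cap \Ab_0$.

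The first inclusion is the easy one. When $k$ is finite, the order of $[[\gamma]]$ being $k$ means $k\,[[\gamma]] = 0$ in $H_1(M,\Z)$, so every $\Lambda$ satisfies $\ev_{[[\gamma]]}(\Lambda)^k = \Lambda(k\,[[\gamma]]) = \1$; hence its value lies in $\Ab_k$. The case $k = \infty$ is vacuous, as $\Ab_\infty = \Ab$ by definition.

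The substantive direction is the reverse inclusion, and this is where the real work lies. Given $z \in \Ab_k \cap \Ab_0$, I must produce a homomorphism $\Lambda \colon H_1(M,\Z) \to \Ab$ with $\Lambda([[\gamma]]) = z$. I would first define $\Lambda$ on the cyclic subgroup $C \subseteq H_1(M,\Z)$ generated by $[[\gamma]]$, sending the generator to $z$. This is well defined as a map into $\Ab_0$ precisely because the only relation in $C$ is $k\,[[\gamma]] = 0$ while $z^k = \1$ by hypothesis (for $k = \infty$ the group $C \cong \Z$ is free, so there is nothing to check). The remaining, and decisive, task is to extend this partial homomorphism along the inclusion $C \hookrightarrow H_1(M,\Z)$.

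The main obstacle is exactly this extension step, and the observation that unlocks it is that $\Ab_0$ is an \emph{injective} $\Z$-module. Indeed, as a connected finite-dimensional abelian Lie group $\Ab_0 \cong \R^a \times (\R/\Z)^b$, which is divisible, and a divisible abelian group is injective as a $\Z$-module (by Baer's criterion). Injectivity lets me extend $C \to \Ab_0$ to a homomorphism $\Lambda \colon H_1(M,\Z) \to \Ab_0 \subseteq \Ab$, whence $\ev_{[[\gamma]]}(\Lambda) = z$ and $z \in \Gamma_{\comp{\gamma}}$. The ``in particular'' clause is then immediate: if $\Ab$ is connected then $\Ab_0 = \Ab$, and the two inclusions collapse to $\Gamma_{\comp{\gamma}} = \Ab_k$. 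I note that finite generation of $H_1(M,\Z)$ plays no role in this particular lemma; it merely guarantees that the order $k$ is a meaningful invariant and is used in the surrounding discreteness arguments.
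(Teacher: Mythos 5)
Your proof is correct, and it takes a genuinely different route from the one the paper indicates. The paper does not write the argument out; it only remarks that the lemma ``can be proven by elementary means from the classification of finitely generated abelian groups and the fact that $\Ab_0 \simeq \R^n \times \U(1)^m$.'' That route would run roughly as follows: use the structure theorem (Smith normal form) to choose a decomposition of $H_1(M,\Z)$ adapted to $[[\gamma]]$, writing $[[\gamma]] = m\,e$ for a generator $e$ of a cyclic direct summand, then build the evaluating homomorphism summand by summand, using divisibility of $\Ab_0 \simeq \R^n \times \U(1)^m$ to extract the $m$-th root needed on the summand containing $[[\gamma]]$. You replace the structure theorem by the single homological fact that a divisible abelian group is an injective $\Z$-module (Baer's criterion): the partial homomorphism on the cyclic subgroup $\langle [[\gamma]]\rangle$ --- whose well-definedness uses exactly $z^k = e$ --- then extends to all of $H_1(M,\Z)$ with no further work. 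Both arguments rest, at bottom, on divisibility of $\Ab_0$, and both must confront the same subtlety, namely that $[[\gamma]]$ need not be primitive in $H_1(M,\Z)$ (e.g.\ $[[\gamma]] = 2e_1$ in $H_1 = \Z$, where the image of evaluation consists of squares); your injectivity argument absorbs this automatically, while the paper's route would handle it by taking roots explicitly. What your version buys is brevity and generality: as you correctly observe, it nowhere uses finite generation of $H_1(M,\Z)$, whereas the paper's suggested argument does. The bookkeeping steps --- the inclusion $\Gamma_{\comp{\gamma}} \subseteq \Ab_k$, the deduction of $\Gamma_{\comp{\gamma}} \cap \Ab_0 = \Ab_k \cap \Ab_0$ from the two inclusions, and the connected case --- are all handled correctly.
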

This can be proven by elementary means from the classification of finitely generated abelian groups and the fact 
that the connected identity component $\Ab_0$ of $\Ab$ is isomorphic to $\R^n \times \U(1)^{m}$.

\begin{proposition}\label{prop:CoreTechnicalResult}
For every connected component $\comp{\gamma} \subseteq LM$, the group $\widehat{N}_{\comp{\gamma}}$ is an embedded Lie subgroup of $N^{\sharp}_{\comp{\gamma}}$, 
and \eqref{eq:weereenrijtje} is a central extension of Fr\'echet--Lie groups.
\end{proposition}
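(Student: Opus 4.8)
The plan is to realize $\widehat{N}_{\comp{\gamma}}$ as the kernel of a smooth, submersive homomorphism out of the locally convex Lie group $N^{\sharp}_{\comp{\gamma}}$ supplied by Proposition~\ref{Prop:vasteklasse}. Recall that $N^{\sharp}_{\comp{\gamma}}$ sits in a central extension $\Ab \to N^{\sharp}_{\comp{\gamma}} \to N$, that $\widehat{N}_{\comp{\gamma}}$ surjects onto $N$, and that, by Lemma~\ref{lem:verticaal}, $\widehat{N}_{\comp{\gamma}} \cap \Ab = \Gamma_{\comp{\gamma}}$; the snake lemma then identifies $N^{\sharp}_{\comp{\gamma}}/\widehat{N}_{\comp{\gamma}} \cong \Ab/\Gamma_{\comp{\gamma}}$, which is a finite-dimensional Lie group. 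Following Lemma~\ref{lem:homomorphisms}, I would distinguish two cases: if $[[\gamma]]$ has infinite order then $\Gamma_{\comp{\gamma}}$ is open in $\Ab$ and $\Ab/\Gamma_{\comp{\gamma}}$ is discrete, whereas if $[[\gamma]]$ has finite order then $\Gamma_{\comp{\gamma}}$ is discrete and $\Ab/\Gamma_{\comp{\gamma}}$ has dimension $\dim\Ab$.

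First I would analyse the Lie algebra. By Proposition~\ref{Prop:vasteklasse} the Lie algebra of $N^{\sharp}_{\comp{\gamma}}$ is the central extension $\fn \oplus_{\psi}\ab$ with cocycle $\psi(X,Y) = \oint_{\gamma} i_{X_{M}} i_{Y_{M}}\omega$. The key observation is that, when $[[\gamma]]$ is torsion, this cocycle is a coboundary. Indeed, for $X\in\fn$ choose $\psi_{X}\in\Omega^{1}(M,\ab)$ with $d\psi_{X} = i_{X_{M}}\omega$ and set $\lambda(X) := \oint_{\gamma}\psi_{X}$; this is well defined because the ambiguity in $\psi_{X}$ is a closed $1$-form, whose integral over $\gamma$ pairs $[[\gamma]]$ against $H^{1}_{\dR}(M,\ab)$ and hence vanishes for torsion $[[\gamma]]$. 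Cartan's formula together with $L_{X_{M}}\omega = 0$ and $d\omega = 0$ yields $i_{[X_{M},Y_{M}]}\omega = d(i_{X_{M}}i_{Y_{M}}\omega)$, so that $\psi(X,Y) = \lambda([X,Y])$. Consequently $\phi\colon \fn\oplus_{\psi}\ab \to \ab$, $(X,a)\mapsto a - \lambda(X)$, is a continuous Lie-algebra homomorphism restricting to the identity on $\ab$, with kernel the graph $\lie{h} := \{(X,\lambda(X))\}$. Since $\ab$ is finite-dimensional, $\lie{h}$ is a closed subalgebra with finite-dimensional, hence topologically complemented, complement $\ab$.

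Next I would integrate $\phi$ to a smooth homomorphism $q\colon N^{\sharp}_{\comp{\gamma}} \to \Ab/\Gamma_{\comp{\gamma}}$ restricting on $\Ab$ to the quotient map $\Ab\to\Ab/\Gamma_{\comp{\gamma}}$. On the identity component this amounts to integrating the closed, left-invariant $\ab$-valued $1$-form $\phi\circ\theta_{\mathrm{MC}}$, and the resulting homomorphism into $\ab$ descends to $\Ab/\Gamma_{\comp{\gamma}}$ precisely when its period group lands in the lattice $\exp^{-1}(\Gamma_{\comp{\gamma}})$. I expect this period computation to be the main obstacle. It is here that Lemma~\ref{lem:homomorphisms} is decisive: the periods are the holonomies of the transgressed bundle $\tgress(\mc{G})|_{\comp{\gamma}}$ pulled back along the orbit map $n\mapsto \action_{n}\circ\gamma$, and the lemma identifies $\Gamma_{\comp{\gamma}}$ with exactly the subgroup of $\Ab_{k}$ that absorbs them, so that $q$ is well defined on the identity component. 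To extend $q$ over the (possibly several) components of $N^{\sharp}_{\comp{\gamma}}$, I would use that the abstract quotient homomorphism $N^{\sharp}_{\comp{\gamma}} \to \Ab/\Gamma_{\comp{\gamma}}$ determined by $\widehat{N}_{\comp{\gamma}}$ agrees with the integrated $q$ on the identity component (both have differential $\phi$ and agree on the centre), which pins $q$ down globally and shows it is smooth.

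Finally, I would match the analytic kernel with the fusion-theoretic one. The subgroup $\ker q$ surjects onto $N$ and satisfies $\ker q\cap\Ab = \Gamma_{\comp{\gamma}}$, and the defining condition of $\widehat{N}_{\comp{\gamma}}$ (that the automorphism extend to a fusion-preserving automorphism of $\tgress(\mc{G})$ over $LM$) translates, via the cocycle computation behind Lemma~\ref{lem:verticaal}, into the vanishing of $q$; hence $\ker q = \widehat{N}_{\comp{\gamma}}$. Because $dq = \phi$ is surjective onto the finite-dimensional $\mathrm{Lie}(\Ab/\Gamma_{\comp{\gamma}})$ with complemented kernel $\lie{h}$, the map $q$ is a submersion, and so $\widehat{N}_{\comp{\gamma}} = \ker q$ is an embedded Lie subgroup of $N^{\sharp}_{\comp{\gamma}}$ of codimension $\dim\Ab$; in the infinite-order case $q$ has discrete target and $\widehat{N}_{\comp{\gamma}}$ is simply an open subgroup. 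Either way, \eqref{eq:weereenrijtje} becomes a central extension of Fr\'echet--Lie groups with Lie-algebra cocycle \eqref{eq:Nicocycle}. The principal difficulty throughout is the period/holonomy computation that forces $q$ to take values in $\Ab/\Gamma_{\comp{\gamma}}$ rather than in a cover; the finite-dimensionality of $\Ab$ is then exactly what guarantees that the resulting kernel is genuinely embedded in the Fr\'echet setting.
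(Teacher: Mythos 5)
Your strategy is genuinely different from the paper's: you want to exhibit $\widehat{N}_{\comp{\gamma}}$ as the kernel of a smooth homomorphism $q\colon N^{\sharp}_{\comp{\gamma}}\to \Ab/\Gamma_{\comp{\gamma}}$ obtained by integrating the Lie algebra splitting $\lambda(X)=\oint_{\gamma}\psi_{X}$ (which is indeed well defined for torsion $[[\gamma]]$ and satisfies $\psi(X,Y)=\lambda([X,Y])$ up to the usual sign conventions for fundamental vector fields), whereas the paper constructs a smooth local section of $N^{\sharp}_{\comp{\gamma}}\to N$ taking values in $\widehat{N}_{\comp{\gamma}}$, by repeatedly renormalizing local sections through the fusion relation and finally extracting a smooth $k$-th root. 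Your treatment of the infinite-order case and your algebraic bookkeeping ($\widehat{N}_{\comp{\gamma}}\cap\Ab=\Gamma_{\comp{\gamma}}$, normality, the abstract quotient $\Ab/\Gamma_{\comp{\gamma}}$) are fine. However, the two steps on which your whole construction rests are asserted rather than proved, and the lemmas you invoke for them do not contain the needed content. The period computation is the crux: Lemma~\ref{lem:homomorphisms} is a purely algebraic statement about the image of $\ev_{[[\gamma]]}\colon \Hom(H_1(M,\Z),\Ab)\to\Ab$, proved from the classification of finitely generated abelian groups; it says nothing about periods of $\phi\circ\theta_{\mathrm{MC}}$ or holonomies of $\tgress(\mc{G})|_{\comp{\gamma}}$. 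What you need --- that $\exp_{\Ab}$ maps the period group of $\phi\circ\theta_{\mathrm{MC}}$ into the discrete (for $\Ab=\U(1)$, finite) group $\Gamma_{\comp{\gamma}}$ when $[[\gamma]]$ is torsion --- is a strong integrality assertion essentially equivalent to the proposition itself. Note that $N^{\sharp}_{\comp{\gamma}}$, $\phi$, and hence the periods are built entirely from the single component $\comp{\gamma}$ and carry no trace of the fusion product, while the proposition holds precisely because elements of $\widehat{N}$ preserve fusion \emph{globally}; any proof must import that cross-component information (this is exactly what the paper's normalization steps 1--4 do), and your proposal never does so in a substantive way.

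The second gap is the kernel identification. Even granting a smooth $q$ with $dq=\phi$ and $q|_{\Ab}$ the quotient map, the facts that $\ker q$ and $\widehat{N}_{\comp{\gamma}}$ both surject onto $N$ and both meet $\Ab$ in $\Gamma_{\comp{\gamma}}$ do not force them to be equal: distinct graphs of splittings of a central extension share exactly these properties. Your argument that $q$ coincides with the abstract quotient homomorphism $\bar q\colon N^{\sharp}_{\comp{\gamma}}\to N^{\sharp}_{\comp{\gamma}}/\widehat{N}_{\comp{\gamma}}\cong\Ab/\Gamma_{\comp{\gamma}}$ ``because both have differential $\phi$ and agree on the centre'' is circular: $\bar q$ has a differential only once it is known to be smooth, i.e.\ only once $\widehat{N}_{\comp{\gamma}}$ is known to be an embedded Lie subgroup, which is the claim under proof. (And even then its differential is only \emph{some} splitting of the cocycle; splittings differ by homomorphisms $\fn/[\fn,\fn]\to\ab$, so identifying it with your $\lambda$ needs its own argument.) The non-circular alternative --- proving directly that $q$ kills $\widehat{N}_{\comp{\gamma}}$ --- is again where fusion must enter, and ``the cocycle computation behind Lemma~\ref{lem:verticaal}'' cannot do this job, since that lemma concerns only vertical automorphisms, i.e.\ elements over $\1\in N$, where nothing is being integrated. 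A smaller technical debt: continuity of $\lambda$ (hence of $\phi$) requires that $\oint_{\gamma}$ factor continuously through $d\Omega^{1}(M,\ab)$ with its subspace topology, which needs $d\Omega^{1}$ closed in $\Omega^{2}(M,\ab)$ plus the open mapping theorem; this is not automatic for noncompact $M$, and the paper's proof avoids choosing primitives altogether.
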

\begin{proof}
The case that $[[\gamma]]$ is of infinite order in $H^{1}(M, \Z)$ is dispensed with easily: in this case $\Gamma_{\comp{\gamma}}$ is an open subgroup of $\Ab$ by Lemma~\ref{lem:homomorphisms}, so that $\widehat{N}_{\comp{\gamma}}$ is an open subgroup of $ N^{\sharp}_{\comp{\gamma}}$. 

If $[[\gamma]]$ is of finite order in $H^1(M, \Z)$, then 
$\Gamma_{\comp{\gamma}} \subseteq \Ab$ is a discrete subgroup.
The group $\widehat{N}_{\comp{\gamma}}\subseteq N^{\sharp}_{\comp{\gamma}}$ is an embedded Lie subgroup that makes $\widehat{N}_{\comp{\gamma}} \rightarrow N$ a discrete cover if and only if there exists a smooth local section 
$\sigma_{\comp{\gamma}} \colon U \rightarrow N^{\sharp}_{\comp{\gamma}}$ on an identity neighbourhood 
$U\subseteq N$
that takes values in $\widehat{N}_{\comp{\gamma}}$, or, equivalently, if  $\sigma_{\comp{\gamma}}(U)\Gamma_{\comp{\gamma}} \subseteq \widehat{N}_{\comp{\gamma}}$.
In fact, since $\widehat{N}_{\comp{\gamma}} \subseteq N^{\sharp}_{\comp{\gamma}}$ an embedded Lie subgroup if and only if there exists an open identity neighbourhood $W \subseteq N^{\sharp}_{\comp{\gamma}}$ such that $\widehat{N}_{\comp{\gamma}} \cap W \subseteq N^{\sharp}_{\comp{\gamma}}\cap W$ is an embedded submanifold,
it suffices to show that $\sigma_{\comp{\gamma}}(U)\Gamma_{\comp{\gamma}}\cap W = \widehat{N}_{\comp{\gamma}} \cap W$.
We will exhibit a section with this property.

Choose, for every connected component $\comp{\gamma}$ of $LM$, a symmetric neighbourhood of the identity, say $U_{\comp{\gamma}} \subseteq N$, together with a smooth local section 
\[
	\sigma_{\comp{\gamma}} \colon U_{\comp{\gamma}} \rightarrow N^{\sharp}_{\comp{\gamma}} 
\]
of the $\Ab$-bundle $N_{\comp{\gamma}}^{\sharp} \rightarrow N$, such that $\sigma_{\comp{\gamma}}(\one) = \one$.
If $n \in U_{\comp{\gamma}}$, and $\hat{N} \ni \hat{n} \mapsto n$, then $\widehat{n} \sigma_{\gamma}(n)^{-1}$ is a vertical, connection-preserving automorphism of 
the transgressed line bundle over $\comp{\gamma}$.
It follows that 
for every $p \in \mathscr{T}(\mc{G})|_{\comp{\gamma}}$, we have 
\begin{equation}\label{eq:Lambdasnede}
\widehat{n} p = \sigma_{\comp{\gamma}}(n) p \cdot \Lambda(\widehat{n}, [\gamma])
\end{equation}
for an element $\Lambda(\widehat{n}, [\gamma]) \in \Ab$ that depends only on $\widehat{n}$ and on the conjugacy class of $[\gamma]\in \pi_1(M)$.
Let $(\gamma_1, \gamma_2, \gamma_3) \in PM^{[3]}$, set $\gamma_{ij} = \ol{\gamma}_{j}*\gamma_i \in PM^{[2]}$, and let $p_{ij} \in \mathscr{T}(\mc{G})$ be elements of the principal $\Ab$-bundle $\mathscr{T}(\mc{G})$ over $\gamma_{ij}$.
Using the fact that $\hat{n}$ preserves the fusion product, i.e.~$\mu(\widehat{n}p_{12} \otimes \widehat{n} p_{23}) = \widehat{n}\mu(p_{12}\otimes p_{23})$, we find that
\begin{equation}\label{nHatFusionSection}
\mu(\sigma_{\gamma_{12}}(n)p_{12} \otimes  \sigma_{\gamma_{23}}(n)p_{23}) \cdot \Lambda(\widehat{n}, [\gamma_{12}])\Lambda(\widehat{n}, [\gamma_{23}])
=
\sigma_{\gamma_{13}}(n)\mu(p_{12} \otimes p_{23}) \cdot \Lambda(\widehat{n}, [\gamma_{13}]) 
\end{equation}
for $n \in U_{[\gamma_{12}]} \cap U_{[\gamma_{23}]}\cap U_{[\gamma_{13}]}$.
Note that $[\gamma_{12}]*[\gamma_{23}] = [\gamma_{13}]$ in $\pi_1(M)$; what's more, for every $x,y \in \pi_{1}(M)$, there exists an element $(\gamma_{1},\gamma_{2},\gamma_{3}) \in LM^{[3]}$ such that $[\gamma_{12}] = x$ and $[\gamma_{23}] = y$, for which we automatically have $x * y = [\gamma_{13}]$.
So if we denote the boundary of the 1-cochain $\Lambda(\widehat{n}, \,\cdot\,) \colon \pi_1(M) \rightarrow \Ab$ by
\[
\delta \Lambda(\widehat{n}, [\gamma_{12}], [\gamma_{23}] ):= \Lambda(\widehat{n}, [\gamma_{12}]) \Lambda(\widehat{n}, [\gamma_{23}])
\Lambda(\widehat{n}, [\gamma_{12}] * [\gamma_{23}])^{-1},
\]
then \eqref{nHatFusionSection} 
becomes 
\begin{equation}\label{eq:fusionboundary}
\delta \Lambda(\widehat{n}, [\gamma_{12}], [\gamma_{23}] )
=
\frac
{\sigma_{\gamma_{13}}(n)\mu(p_{12} \otimes p_{23})}
{\mu(\sigma_{\gamma_{12}}(n)p_{12} \otimes  \sigma_{\gamma_{23}}(n)p_{23})},
\end{equation}
where the r.h.s.~denotes the unique element of $\Ab$ that takes the denominator to the numerator (both live in 
the same fibre of the transgressed principal $\Ab$-bundle over $\gamma_{13}$).

Note that the expression in \eqref{eq:fusionboundary} depends on $\widehat{n} \in \widehat{N}$ only through $n \in N$ (because the r.h.s.\ does so), 
and it depends on $p_{ij} \in \mathscr{T}(\mc{G})$ only through the conjugacy class of $[\gamma_{ij}] \in \pi_1(M)$ (because the l.h.s.\ does so).
Further, since $\Lambda(\widehat{n},\,\cdot\,)$ is a class function, the coboundary 
\begin{equation}\label{DefB}
B(n,[\gamma_{12}], [\gamma_{23}]) := \delta \Lambda(\widehat{n}, [\gamma_{12}], [\gamma_{23}] )
\end{equation}
is conjugation invariant, and symmetric in $[\gamma_{12}]$ and $[\gamma_{23}]$.
It satisfies $B(\one, [\gamma_{12}], [\gamma_{23}]) = 1$ because $\Lambda(\one, [\gamma]) = 1$ for all $[\gamma]\in \pi_1(M)$.

Further, $B$
depends smoothly on $n$ for fixed $[\gamma_{12}], [\gamma_{23}] \in \pi_1(M)$. 
Indeed, since $n \mapsto \sigma_{\gamma_{13}}(n)$ is a smooth map from $U_{\gamma_{13}}$ to $N^{\sharp}_{\gamma_{13}}$,
and since $N^{\sharp}_{\gamma_{13}}$ acts smoothly on the diffeological bundle $l^*\mathscr{T}(\mc{G}) \rightarrow PM^{[2]}$, the map
\[N \supseteq U_{\gamma_{13}} \rightarrow l^*\mathscr{T}(\mc{G}), \quad n \mapsto \sigma_{\gamma_{13}}(n)\mu(p_{12} \otimes p_{23})\]
is a smooth map of diffeological spaces that covers the action of $N$ on the base $PM^{[2]}$.
In the same vein, 
\[
n \mapsto \mu(\sigma_{\gamma_{12}}(n)p_{12} \otimes  \sigma_{\gamma_{23}}(n)p_{23})
\]
is a smooth map from $U_{\gamma_{12}} \cap U_{\gamma_{23}}$ to $ l^*\mathscr{T}(\mc{G})$ that covers the same action on the base.
It follows that the quotient $n \mapsto B(n,[\gamma_{12}], [\gamma_{23}])$ is a smooth map $U_{\gamma_{12}} \cap U_{\gamma_{23}} \cap U_{\gamma_{13}}
\rightarrow \Ab$ of diffeological spaces.
Since the category of Fr\'echet manifolds embeds fully faithfully into the category of diffeological spaces \cite{Fr81, Lo92}, this map is smooth in 
the category of Fr\'echet manifolds as well.

We shall make repeated use of the following trick.
Suppose that for every connected component $\comp{\gamma}$,
we have a symmetric identity neighbourhood $V_{\comp{\gamma}} \subseteq N$,  and a
smooth function 
\[V_{\comp{\gamma}} \rightarrow \Ab, n \mapsto A(n, [\gamma])\]
with $A(\one, [\gamma]) = 1$.
Then $n \mapsto  \sigma_{\comp{\gamma}}(n) A(n, [\gamma])$ is a smooth local section of $N^{\sharp}_{\comp{\gamma}} \rightarrow N$ on the symmetric identity neighbourhood $U_{\comp{\gamma}}\cap V_{\comp{\gamma}}$.
Correspondingly, $\Lambda(\widehat{n}, [\gamma])$ changes to 
\[\widetilde{\Lambda}(\widehat{n}, [\gamma]) = \Lambda(\widehat{n}, [\gamma])A(n, [\gamma])^{-1},\] and 
$B(n, [\gamma_{12}], [\gamma_{23}])$ changes to 
\[\widetilde{B}(n, [\gamma_{12}], [\gamma_{23}]) = B(n, [\gamma_{12}], [\gamma_{23}]) \cdot \Big(\delta A(n, [\gamma_{12}], [\gamma_{23}])\Big)^{-1},\]
where $\delta A(n, [\gamma], [\tau]) := A(n, [\gamma])A(n, [\tau]) A^{-1}(n, [\gamma]*[\tau])$
as before. 
Since $A(n, [\gamma] )$ is conjugation invariant, we have $A(n, [\gamma]*[\tau]) = A(n, [\tau]*[\gamma])$ for all $[\gamma], [\tau] \in \pi_1(M)$. 
This implies that $B$ is conjugation invariant, and that it remains symmetric in the last two arguments.
Similarly, $\widetilde{\Lambda}(\one, [\gamma]) = 1$ and $\widetilde{B}(\one, [\gamma_{12}], [\gamma_{23}]) = 1$ because $A(\one, [\gamma]) = 1$
for all $\gamma$.

\begin{enumerate}
\item We may assume that $B(n,[\gamma],[*]) = B(n,[*],[\gamma]) = 1$ for all $[\gamma] \in \pi_{1}(M)$, and $\Lambda(\hat{n},[*]) = 1$.

 This is achieved by replacing $\sigma_{\comp{\gamma}}$ by $\widetilde{\sigma}_{\comp\gamma}:n \mapsto \sigma_{\comp\gamma}(n) A(n, [\gamma])$ where $A(n,[*]) := B(n, [*], [*]) = \Lambda(\hat{n},[*])$ and $A(n,[\gamma]) := 1$ for $\langle \gamma \rangle \neq \langle * \rangle$.
 Since $\delta A (n, [*], [*]) = A(n, [*])$, we obtain $\widetilde{B}(n, [*], [*]) = 1$.
 Moreover, we have $\widetilde{\Lambda}(\hat{n},[*]) = \Lambda(\hat{n},[*]) A(\hat{n},[*])^{-1} = \Lambda(\hat{n},[*]) B(n,[*],[*])^{-1} = 1$.
 Using Equation \eqref{DefB} we then obtain $\widetilde{B}(n,[\gamma],[*]) = 1 = \widetilde{B}(n,[*],[\gamma])$ for any $[\gamma] \in \pi_1(M)$.
 
 \item We may assume (additionally) that, for all commutators $[c] = [\tau] * [\gamma] * [\tau]^{-1} * [\gamma]^{-1}$ we have $\Lambda(\widehat{n},[c]) = 1$.
 
 Indeed, fix $[\tau], [\gamma] \in \pi_{1}(M)$, and set $[c] = [\tau] * [\gamma] * [\tau]^{-1} * [\gamma]^{-1}$.
 By step 1, we already know that if $[c]$ is conjugate to $[*]$, then $\Lambda(\widehat{n},[c]) = 1$, so we assume that $[c]$ is not conjugate to $[*]$.
 Since $\Lambda(\widehat{n}, \,\cdot\,)$ is a class function, we have \[\Lambda(\widehat{n}, [\tau]*[\gamma]*[\tau]^{-1}) = \Lambda(\widehat{n},[\gamma]),\] 
 so \eqref{DefB} yields
 \begin{eqnarray*}
 \Lambda(\widehat{n}, [\tau]*[\gamma]*[\tau]^{-1})\Lambda(\widehat{n}, [\gamma]^{-1}) &=& \Lambda(\widehat{n}, [c])
 B(n, [\tau]*[\gamma]*[\tau]^{-1}, [\gamma]^{-1})\\
 =\Lambda(\widehat{n}, [\gamma]) \Lambda(\widehat{n}, [\gamma]^{-1}) &=& \Lambda(\widehat{n}, [*]) B(n, [\gamma], [\gamma]^{-1}).
 \end{eqnarray*}
 Since $\Lambda(\widehat{n}, [*]) = 1$ by step 1, we find 
 \begin{equation}\label{eq:conjuneer}
 \Lambda(\widehat{n}, [c]) = \frac{B(n, [\gamma], [\gamma]^{-1})}{B(n, [\tau]*[\gamma]*[\tau]^{-1}, [\gamma]^{-1})}
\end{equation}
 on $U := U_{[c]} \cap U_{[\gamma]} \cap U_{[\gamma]^{-1}}\cap U_{[*]}$. 
Note that the right hand side depends on $\widehat{n}$ only through $n$, and is a smooth function of $n$.
We define $A(n, [\sigma])$ to be the r.h.s. of \eqref{eq:conjuneer} if $[\sigma]$ is conjugate to $[c]$, and $A(n, [\sigma]) = 1$ otherwise.
Because $[c]$ is not conjugate to $[*]$ we have $A(n, [*]) = 1$ and thus $\widetilde{B}(n,[\gamma],[*]) = B(n,[*],[\gamma]) = 1$.

Then $A$ is smooth in the first variable, it is a class function in the second variable, and after applying the trick we have
$\widetilde{\Lambda}(\widehat{n}, [c]) = 1$. In fact, since $\widetilde{\Lambda}(\widehat{n}, \,\cdot\,)$ is a class function, 
we even have $\widetilde{\Lambda}(\widehat{n}, [\sigma]) = 1$ for every $[\sigma]$ conjugate to $[c]$, whereas the value of 
$\widetilde{\Lambda}(\widehat{n}, [\sigma])$ remains unchanged if $[\sigma]$ is not conjugate to $[c]$.
(In particular, we still have $\widetilde{\Lambda}(\widehat{n}, [*]) = 1$.)
Repeating this process, we can achieve that $\widetilde{\Lambda}(\widehat{n}, [c]) = 1$ for every commutator. (Applying the trick only 
requires one to adapt the domain $U_{[\sigma]}$ within the conjugacy class of $[c]$, so every domain is intersected with at most 3 other open sets.)

\item We may assume (additionally) that $\Lambda(\widehat{n},[c]) = 1$ for all $[c] \in [\pi_{1}(M),\pi_{1}(M)]$.

Let $F_{m} \subseteq [\pi_1(M), \pi_1(M)]$ be the set of elements that can be written as a product 
of at most $m$ commutators.
Suppose by induction that $\Lambda(\widehat{n}, [c]) = 1$ for every $[c]\in F_m$. (The induction hypothesis is step 2.)
Writing $[c] \in F_{m+1}\setminus F_{m}$ as $[c] = [c_1]*[c_2]$ with $[c_1], [c_2] \in F_{m}$, we find 
\[
1 = \Lambda(\widehat{n}, [c_1])\Lambda(\widehat{n}, [c_2]) = \Lambda(\widehat{n}, [c])B(n, [c_1], [c_2]),
\]
so setting $A(n, [\sigma]) := B(n, [c_1], [c_2])$ for every $[\sigma]$ in the conjugacy class of $[c]$ and $A(n, [\sigma]) = 1$
otherwise, we find that $\widetilde{\Lambda}(\widehat{n}, [c]) = 1$. Since $\Lambda$ is a class function, we have $\widetilde{\Lambda}(\widehat{n}, [\sigma]) = 1$ on the conjugacy class of $[c]$, and since the sets $F_m$ are conjugation invariant, we still have $\widetilde{\Lambda}(\widehat{n}, [\sigma]) = 1$ for $[\sigma] \in F_m$.
Repeat this for every conjugacy class of $F_{m+1}\setminus F_m$ to find $\widetilde{\Lambda}(\widehat{n}, [c]) = 1$ on $F_{m+1}$.
By induction, there exists a choice of sections such that $\Lambda(n, [c]) = 1$ for every $[c] \in [\pi_1(M), \pi_1(M)]$.
(Note that step $m$ of the induction affects the domain $U_{[c]}$ only for $[c] \in F_{m+1}\setminus F_{m}$, so every domain
is clipped only finitely many times.)

 

\item We may assume (additionally) that, if $[[\gamma]]$ is of finite order $k$ in $H_{1}(M,\Z)$, 
then $\Lambda( \widehat{n}, [\gamma])^{k} = 1$.

Indeed, let $[\gamma]$ be as above.
Then $[\gamma]^k \in [\pi_1(M), \pi_1(M)]$, so $\Lambda(\widehat{n}, [\gamma]^k) = 1$ by step 3. 
Since 
\[
	\Lambda(\widehat{n}, [\gamma])\Lambda(\widehat{n}, [\gamma]^i) = B(n, [\gamma], [\gamma]^{i}) \Lambda(\widehat{n}, [\gamma]^{i+1})
\]
for all $i \in \{0, \ldots, k-1\}$, and since $\Lambda(\widehat{n}, [\gamma]^{k}) = 1$, we find inductively that 
\[
\Lambda(\widehat{n}, [\gamma])^k = \prod_{i=0}^{k-1}B(n, [\gamma], [\gamma]^i) =: \beta(n, [\gamma])
\]
on $\bigcap_{i=0}^{k} U_{[\gamma]^k}$. Since $\sigma_{\gamma}(\one) = \one$, we have 
$\Lambda(\one, [\gamma]) = 1$, so
$B(\one, [\gamma], [\tau]) = 1$ for all $[\gamma], [\tau] \in \pi_1(M)$. It follows that $\beta(\one, [\gamma]) = 1$. 
Let $W \subseteq \ab$ be an open neighbourhood of $0$ for which the exponential map $\exp \colon \ab  \supseteq W \rightarrow \Ab$ 
is a diffeomorphism onto its image.
By choosing $U\subseteq \bigcap_{i=0}^{k} U_{[\gamma]^k}$ 
sufficiently small, we have $\beta(U, [\gamma]) \subseteq \exp(W)$, yielding a smooth $k^{\mathrm{th}}$ root 
\[
A(n, [\gamma]):= \sqrt[k]{ \beta(n, [\gamma])}
\]
on $U$ with $A(\one, [\gamma]) = 1$. Since $\Lambda(\widehat{n}, [\gamma])^k = A(n, [\gamma])^k$, we find 
$\widetilde{\Lambda}(\widehat{n}, [\gamma])^k = 1$ after applying the trick.
\end{enumerate}
Using a local section $\sigma_{\comp{\gamma}} \colon N \supseteq U \rightarrow N^{\sharp}_{\comp{\gamma}}$ with these properties, we obtain a local trivialization 
\begin{equation*}
			\begin{tikzcd}
				N^{\sharp}_{\comp{\gamma}}\ar[d, "\pi"]& \pi^{-1}(U) \ar[d, "\pi"] \ar[l, hook', "\iota"] \ar[r, "\sim"]& U \times \Ab \ar[d, "\mathrm{pr_{U}}"]\\
				N & U\ar[l, hook', "\iota"] \ar[r, equal]&U
			\end{tikzcd}
		\end{equation*}
of the smooth principal $\Ab$-bundle $\pi \colon N^{\sharp}_{\comp{\gamma}} \rightarrow N$
over the identity neighbourhood $U \subseteq N$. Since ${\Lambda( \widehat{n}, [\gamma])^{k} = 1}$, 
the subgroup $\widehat{N}_{\comp{\gamma}} \subseteq N^{\sharp}_{\comp{\gamma}}$
corresponds to a $\Gamma_{\comp{\gamma}}$-equivariant subset of $U \times \Ab_{k}$ with respect to this local trivialization. Since $\Gamma_{\comp{\gamma}} \cap \Ab_0 = \Ab_k \cap \Ab_0$, 
the intersection of $\widehat{N}_{\comp{\gamma}}$ with the open identity neighbourhood $U \times \Ab_0$ of $N^{\sharp}_{\comp{\gamma}}$
is $U \times \Gamma_{\comp{\gamma}}$, and we conclude that $\widehat{N}_{\comp{\gamma}}$ is an embedded subgroup of~$N^{\sharp}_{\comp{\gamma}}$.
\end{proof}

Because $N$ acts trivially on $\pi_0(LM)$, it also acts trivially on $H_{1}(M, \mathbb{Z})$, which we assume to be finitely generated.
Pick generators $[\alpha_{i}] \in H_{1}(M, \mathbb{Z})$ and choose lifts $[\gamma_{i}] \in \pi_{1}(M)$.
Suppose that the generators are indexed by $i \in \{1, 2, ..., k\}$.
Taking the fibre product over $N$ of the Lie groups $\widehat{N}_{\comp{\gamma_{i}}}$ we obtain a Fr\'echet--Lie group
\begin{equation}\label{eq:defNhat}
\widehat{N}' := \widehat{N}_{\comp{\gamma_{1}}}\times_{N} \ldots \times_{N} \widehat{N}_{\comp{\gamma_{k}}}, 
\end{equation}
and a central Lie group extension
\begin{equation}
\prod_{i=1}^{k}\Gamma_{\comp{\gamma_{i}}} \rightarrow \widehat{N}' \rightarrow N
\end{equation}
of $N$ by the abelian Lie group $\Gamma := \prod_{i=1}^{k}\Gamma_{\comp{\gamma_{i}}}$. 
Since every diagram 
	\begin{equation}
		\begin{tikzcd}
			H^{1}(M,\Ab) \ar[r] \ar[d] & \widehat{N} \ar[r] \ar[d] & N \ar[d,equal] \\
			\Gamma_{\comp{\gamma_{i}}} \ar[r] & \widehat{N}_{\comp{\gamma_{i}}} \ar[r] & N
		\end{tikzcd}
	\end{equation}
	is commutative, the diagram 
	\begin{equation}\label{diag:niceEquivalence}
		\begin{tikzcd}
			H^{1}(M,\Ab) \ar[r] \ar[d] & \widehat{N} \ar[r] \ar[d] & N \ar[d,equal] \\
			\Gamma \ar[r] & \widehat{N}' \ar[r] & N
		\end{tikzcd}
	\end{equation}
	is commutative as well.
	The vertical arrow on the left is the product of the evaluation homomorphisms 
\begin{equation}\label{eq:evalLieGp}
	\prod_{i=1}^{k}\ev_{[[\gamma]]}\colon \quad
	\mathrm{Hom}(H_{1}(M,\mathbb{Z}), \Ab) \longrightarrow \Gamma = \prod_{i=1}^{n}\Gamma_{\comp{\gamma_{i}}},
\end{equation}
which is surjective by definition, and injective because a group homomorphism is completely determined by its values on a set of generators.
It follows that \eqref{diag:niceEquivalence} is an equivalence of central group extensions, so the Fr\'echet--Lie group structure can be transported from the bottom row to the top.	

Recall that every $\Gamma_{\comp{\gamma_{i}}}$ is an embedded Lie subgroup of $\Ab$. 
Indeed, $\Gamma_{\comp{\gamma_i}}\subseteq \Ab$ is an open subgroup 
if $[\gamma_i]\in H_1(M,\Z)$ is of infinite order, and a discrete subgroup if $[\gamma_i]$ is of finite order.
We may assume without loss of generality that the first $b_1$ generators $[\alpha_i]$ are of infinite order, where $b_1$ is the first Betti number of $M$, and the remaining generators are of finite order.
The Lie algebra of $\Gamma$ can then be identified with the abelian Lie algebra
\begin{equation*}
	\mathrm{Lie}(\Gamma) = \bigoplus_{i=1}^{b_1}\ab_{i},
\end{equation*}
and the Lie algebra cocycles \eqref{eq:Nicocycle} for $\widehat{N}_{\comp\gamma}$ combine to the Lie algebra cocycle 
\[
\Psi'(X,Y) =  \bigoplus_{i=1}^{b_1} \left(\oint_{\gamma_i} i_{X_M}i_{Y_M}\omega\right)
\]
on $\mathfrak{n}$ with values in $\bigoplus_{i=1}^{b_1}\ab_{i}$.

Differentiating \eqref{eq:evalLieGp}, the evaluation homomorphism
\begin{equation*}
\ev \colon H^1_{\mathrm{dR}}(M,\ab) \rightarrow \bigoplus_{i=1}^{b_1}\ab_i, \quad \ev([\theta]) := \bigoplus_{i=1}^{b_1} \oint_{\alpha_i}\theta
\end{equation*}
is an isomorphism of abelian Lie algebras.
The central Lie algebra extension $H^1_{\mathrm{dR}}(M,\ab ) \rightarrow \widehat{\lie{n}}_{\omega} \rightarrow \fn$ from \eqref{eq:AlgExtension2}
gives rise to the 2-cocycle 
\[
	\Psi(X,Y) = [i_{X_M}i_{Y_M}\omega]
\]
on $\fn$ with values in $H^1_{\mathrm{dR}}(M,\ab)$. Since $\ev \circ \Psi = \Psi'$, the central Lie algebra extension associated to the exact sequence $\Gamma \rightarrow \widehat{N}' \rightarrow N$ is isomorphic to \eqref{eq:AlgExtension2}.
This concludes the proof of the following theorem.

\begin{theorem}\label{Thm:centralcase}
Let $M$ be an orientable, connected, finite-dimensional  manifold with finitely generated $H_1(M,\Z)$, and let $\cG$ be a bundle gerbe on $M$ with curvature 
$\omega \in \Omega^3(M,\ab)$.
Let $N$ be be a Fr\'echet--Lie group that acts smoothly 
on $M$, and which preserves the class of $\cG$.  If $N$ acts trivially on $\pi_0(LM)$, then
\[
	H^1(M, \Ab) \rightarrow \widehat{N} \rightarrow N
\]
is a central extension of Fr\'echet--Lie groups, with corresponding central Lie algebra extension 
\[
	H^1_{\mathrm{dR}}(M,\ab) \rightarrow \widehat{\lie{n}}_{\omega} \rightarrow \fn
\]
described by \eqref{eq:AlgExtension2}, \eqref{eq:LAcentralextension}, \eqref{eq:definitionbracket}.
\end{theorem}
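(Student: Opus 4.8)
The plan is to build the Fr\'echet--Lie structure on $\widehat{N}$ componentwise over the loop space and then reassemble it from finitely many components, the finite generation of $H_1(M,\Z)$ being exactly what makes this possible. First I would invoke Proposition~\ref{prop:GroupTransgression} to identify $\widehat{N}$ with the group of connection- and fusion-preserving automorphisms of $\tgress(\cG)\to LM$ that cover the $N$-action. Since $N$ acts trivially on $\pi_0(LM)$, it preserves each connected component $\comp{\gamma}$, so restriction yields for every $\comp{\gamma}$ a homomorphism $\widehat{N}\to N^{\sharp}_{\comp{\gamma}}$ into the central extension of Proposition~\ref{Prop:vasteklasse}, with image $\widehat{N}_{\comp{\gamma}}$.

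The key reduction is to realise $\widehat{N}$ as a fibre product over finitely many such components. By Lemma~\ref{lem:verticaal} the kernel $K\cong\Hom(H_1(M,\Z),\Ab)$ maps into $N^{\sharp}_{\comp{\gamma}}$ by evaluation $\ev_{[[\gamma]]}$ at the homology class of $\gamma$. I would therefore pick generators $[\alpha_i]$ of $H_1(M,\Z)$ with lifts $[\gamma_i]\in\pi_1(M)$, $i=1,\dots,k$, and form $\widehat{N}':=\widehat{N}_{\comp{\gamma_1}}\times_N\cdots\times_N\widehat{N}_{\comp{\gamma_k}}$ as in \eqref{eq:defNhat}. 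Proposition~\ref{prop:CoreTechnicalResult} supplies, for each factor, a central extension of Fr\'echet--Lie groups $\Gamma_{\comp{\gamma_i}}\to\widehat{N}_{\comp{\gamma_i}}\to N$ with $\widehat{N}_{\comp{\gamma_i}}$ embedded in $N^{\sharp}_{\comp{\gamma_i}}$; hence $\widehat{N}'$ is itself a central Fr\'echet--Lie extension of $N$ by $\Gamma:=\prod_i\Gamma_{\comp{\gamma_i}}$.

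Next I would transport this structure back. The canonical map $\widehat{N}\to\widehat{N}'$, induced by the component projections via the universal property of the fibre product, covers the identity on $N$ and restricts on kernels to $\prod_i\ev_{[[\gamma_i]]}\colon\Hom(H_1(M,\Z),\Ab)\to\Gamma$. This homomorphism is surjective because each $\Gamma_{\comp{\gamma_i}}$ is by definition the image of $\ev_{[[\gamma_i]]}$ (Lemma~\ref{lem:homomorphisms}), and injective because a homomorphism out of $H_1(M,\Z)$ is determined by its values on the generators $[\alpha_i]$. It is thus an isomorphism of abelian groups, so $\widehat{N}\to\widehat{N}'$ is an equivalence of central extensions, and the Fr\'echet--Lie structure transfers to one on $\widehat{N}$ making $H^1(M,\Ab)\to\widehat{N}\to N$ a central extension.

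Finally, to read off the Lie algebra extension I would differentiate. By \eqref{eq:Nicocycle} the cocycle of $\widehat{N}_{\comp{\gamma_i}}$ is $(X,Y)\mapsto\oint_{\gamma_i}i_{X_M}i_{Y_M}\omega$; over the infinite-order generators (indexed $1,\dots,b_1$) these assemble into the cocycle $\Psi'$ on $\fn$ valued in $\bigoplus_{i=1}^{b_1}\ab_i$, the finite-order factors contributing discrete $\Gamma_{\comp{\gamma_i}}$ and hence nothing infinitesimally. Differentiating the evaluation isomorphism gives $\ev\colon H^1_{\dR}(M,\ab)\xrightarrow{\sim}\bigoplus_{i=1}^{b_1}\ab_i$, $[\theta]\mapsto\bigoplus_{i=1}^{b_1}\oint_{\alpha_i}\theta$, which intertwines $\Psi(X,Y)=[i_{X_M}i_{Y_M}\omega]$ with $\Psi'$; this identifies the differentiated extension with \eqref{eq:AlgExtension2}. \emph{I expect the genuine obstacle to lie entirely in Proposition~\ref{prop:CoreTechnicalResult}} --- the embeddedness of each $\widehat{N}_{\comp{\gamma}}$ inside $N^{\sharp}_{\comp{\gamma}}$, which rests on the iterative correction of local sections that uses the fusion product to trivialise the class function $\Lambda(\widehat{n},\cdot)$ successively on the basepoint class, on commutators, and on finite-order powers $[\gamma]^k$. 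Granting that input, all of the above is a formal assembly of central extensions.
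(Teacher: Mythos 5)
Your proposal is correct and follows essentially the same route as the paper's own proof: identify $\widehat{N}$ via Proposition~\ref{prop:GroupTransgression} with the connection- and fusion-preserving automorphisms of $\tgress(\cG)$, restrict to components to get the maps $\widehat{N}\rightarrow N^{\sharp}_{\comp{\gamma}}$, invoke Proposition~\ref{prop:CoreTechnicalResult} for the embedded Lie subgroups $\widehat{N}_{\comp{\gamma}}$, form the fibre product $\widehat{N}'$ of \eqref{eq:defNhat} over lifts of generators of $H_1(M,\Z)$, transfer the Lie group structure through the equivalence of central extensions given by the (injective, surjective) product of evaluation maps, and identify the Lie algebra extension with \eqref{eq:AlgExtension2} via $\ev\circ\Psi=\Psi'$. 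You also correctly locate the genuine technical content in Proposition~\ref{prop:CoreTechnicalResult}; granting it, the paper's proof is the same formal assembly you describe.
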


In equipping $\widehat{N}$ with the structure of Fr\'{e}chet manifold, we picked generators $[\alpha_{i}] \in H_{1}(M, \Z)$ and lifts $[\gamma_{i}] \in \pi_{1}(M)$.
In the sequel, we argue that the smooth structure on $\widehat{N}$ does not depend on any of these choices.

\begin{lemma}\label{lem:SmoothCommutators}
	If $\langle \gamma \rangle \subseteq LM$ is the connected component of a loop $\gamma$ such that $[\gamma ] \in [\pi_{1}(M),\pi_{1}(M)]$, then the map $\widehat{N} \rightarrow \widehat{N}_{\langle \gamma \rangle }$ is smooth.
\end{lemma}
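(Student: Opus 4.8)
The plan is to show that, for a commutator loop, the component group $\widehat{N}_{\comp{\gamma}}$ is nothing more than a copy of $N$, so that the homomorphism in question is, up to an isomorphism of Lie groups, the (smooth) projection $\widehat{N} \to N$. The starting observation is that $[\gamma] \in [\pi_1(M),\pi_1(M)]$ is equivalent to $[[\gamma]] = 0$ in $H_1(M,\Z)$. By Lemma~\ref{lem:verticaal}, the restriction to $K$ of the homomorphism $\widehat{N} \to N^{\sharp}_{\comp{\gamma}}$ is the evaluation $\ev_{[[\gamma]]} \colon \Hom(H_1(M,\Z),\Ab) \to \Ab$ at $[[\gamma]]$; since $[[\gamma]] = 0$, this evaluation is trivial, so its image $\Gamma_{\comp{\gamma}}$ is the trivial group.

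Consequently the central extension $\Gamma_{\comp{\gamma}} \to \widehat{N}_{\comp{\gamma}} \to N$ provided by Proposition~\ref{prop:CoreTechnicalResult} has trivial fibre. I would then argue that the projection $\widehat{N}_{\comp{\gamma}} \to N$ is an isomorphism of Fr\'echet--Lie groups: it is a bijective smooth homomorphism, and the smooth local section furnished by the central extension is, by injectivity, a smooth right inverse near the identity, hence equal to the set-theoretic inverse there; being (the inverse of) a homomorphism, this inverse is then smooth everywhere.

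To finish, I observe that the map $\widehat{N} \to \widehat{N}_{\comp{\gamma}}$ is a homomorphism of Fr\'echet--Lie groups which, when post-composed with $\widehat{N}_{\comp{\gamma}} \to N$, equals the projection $\widehat{N} \to N$ of the central extension in Theorem~\ref{Thm:centralcase}. That projection is smooth, and since $\widehat{N}_{\comp{\gamma}} \to N$ is a Lie-group isomorphism, the map $\widehat{N} \to \widehat{N}_{\comp{\gamma}}$ equals $(\widehat{N}_{\comp{\gamma}} \to N)^{-1}$ composed with this smooth projection, and is therefore smooth. Concretely, the inverse isomorphism is realized near the identity by the normalized section $\sigma_{\comp{\gamma}}$ of Proposition~\ref{prop:CoreTechnicalResult}, for which step~(3) of the proof guarantees $\Lambda(\widehat{n},[\gamma]) = 1$ for the commutator class $[\gamma]$; Equation~\eqref{eq:Lambdasnede} then shows that $\widehat{n}$ acts on $\tgress(\mc{G})|_{\comp{\gamma}}$ exactly as $\sigma_{\comp{\gamma}}(\pi(\widehat{n}))$, so near the identity the map is $\widehat{n} \mapsto \sigma_{\comp{\gamma}}(\pi(\widehat{n}))$, manifestly smooth.

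The argument is short, and the step I expect to demand the most care is the claim that the projection $\widehat{N}_{\comp{\gamma}} \to N$ is a genuine smooth isomorphism rather than merely a continuous bijection; this is exactly where the smooth local section built into Proposition~\ref{prop:CoreTechnicalResult} must be invoked. The final passage from a local statement to the global one is routine: a homomorphism of Lie groups that is smooth on an identity neighbourhood is smooth throughout, by left translation.
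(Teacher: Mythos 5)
Your proof is correct and takes essentially the same approach as the paper: you show $\Gamma_{\comp{\gamma}}$ is trivial for a commutator class, conclude that the projection $\widehat{N}_{\comp{\gamma}} \rightarrow N$ is an isomorphism of Fr\'echet--Lie groups, and then obtain smoothness of $\widehat{N} \rightarrow \widehat{N}_{\comp{\gamma}}$ from the commutative triangle with the smooth projection $\widehat{N} \rightarrow N$. Your additional justification that the bijective smooth projection is a genuine Lie-group isomorphism (via the smooth local section), and the explicit local formula $\widehat{n} \mapsto \sigma_{\comp{\gamma}}(\pi(\widehat{n}))$, merely spell out steps the paper leaves implicit.
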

\begin{proof}
	By Proposition \ref{prop:CoreTechnicalResult}, the Lie group $\widehat{N}_{\langle \gamma \rangle }$ is an extension of $N$ by $\Gamma_{\langle\gamma\rangle}$, where $\Gamma_{\langle \gamma \rangle}$ is the image of the evaluation map $\ev_{[[\gamma]]}: K \rightarrow \Ab$.
	The assumption that $[\gamma] \in [\pi_{1}(M),\pi_{1}(M)]$ implies that $\Gamma_{\langle \gamma \rangle} = \{ 1 \}$.
	In turn, this implies that the projection map $\widehat{N}_{\langle \gamma \rangle} \rightarrow N$ is an isomorphism.
	The commutativity of the diagram
	\begin{equation*}
		\begin{tikzcd}
			\widehat{N} \ar[rd] \ar[r] & \widehat{N}_{\langle \gamma \rangle} \ar[d] \\
			& N \ar[u, bend right, dotted]
		\end{tikzcd}
	\end{equation*}
	then tells us that the map $\widehat{N} \rightarrow \widehat{N}_{\langle \gamma \rangle}$ is smooth.
\end{proof}

\begin{lemma}\label{lem:SmoothProducts}
	Let $[\gamma_{1}],[\gamma_{2}] \in \pi_{1}(M)$, and suppose that the maps $\widehat{N} \rightarrow \widehat{N}_{\langle \gamma_{1} \rangle }$ and $\widehat{N} \rightarrow \widehat{N}_{\langle \gamma_{2} \rangle }$ are smooth.
	Then, the map $\widehat{N} \rightarrow \widehat{N}_{\langle \gamma_{1} \star \gamma_{2} \rangle }$ is smooth.
\end{lemma}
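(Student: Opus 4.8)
The plan is to reduce the smoothness of each map $\widehat{N} \to \widehat{N}_{\comp{\gamma}}$ to the smoothness of the associated $\Ab$-valued cocycle $\widehat{n} \mapsto \Lambda(\widehat{n},[\gamma])$, and then to use the fusion-boundary identity \eqref{DefB} to express the cocycle attached to a product $[\gamma_1]*[\gamma_2]$ in terms of the cocycles attached to $[\gamma_1]$ and $[\gamma_2]$ together with a correction term that is smooth in $n$.

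First I would fix, for each component, a smooth local section $\sigma_{\comp{\gamma}} \colon U \to N^{\sharp}_{\comp{\gamma}}$ with $\sigma_{\comp{\gamma}}(\one)=\one$ as in the proof of Proposition~\ref{prop:CoreTechnicalResult}, so that every $\widehat{n} \in \widehat{N}$ lying over $n \in U$ restricts on $\comp{\gamma}$ to $\sigma_{\comp{\gamma}}(n)\cdot \Lambda(\widehat{n},[\gamma])$, cf.~\eqref{eq:Lambdasnede}. In the resulting local trivialization $\pi^{-1}(U) \cong U \times \Ab$, the homomorphism $\widehat{N} \to N^{\sharp}_{\comp{\gamma}}$ becomes $\widehat{n} \mapsto (n,\Lambda(\widehat{n},[\gamma]))$. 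Since the first component $\widehat{n}\mapsto n$ is always smooth, and since $\widehat{N}_{\comp{\gamma}}$ is an embedded Lie subgroup of $N^{\sharp}_{\comp{\gamma}}$ by Proposition~\ref{prop:CoreTechnicalResult}, the map $\widehat{N} \to \widehat{N}_{\comp{\gamma}}$ is smooth if and only if $\widehat{n} \mapsto \Lambda(\widehat{n},[\gamma])$ is smooth near the identity; smoothness near $\one$ suffices because the map is a group homomorphism between Lie groups. This reduction recasts both hypotheses and the conclusion as statements about smoothness of the class functions $\Lambda(\widehat{n},\,\cdot\,)$ on $\pi_1(M)$.

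Next, invoking the observation made after \eqref{nHatFusionSection}, I would choose a fusion triple $(\gamma_1',\gamma_2',\gamma_3') \in PM^{[3]}$ with $[\gamma_{12}'] = [\gamma_1]$ and $[\gamma_{23}'] = [\gamma_2]$, so that automatically $[\gamma_{13}'] = [\gamma_1]*[\gamma_2]$, whose component is $\comp{\gamma_1 \star \gamma_2}$. The identity \eqref{DefB} then rearranges, on the common domain $U_{[\gamma_1]} \cap U_{[\gamma_2]} \cap U_{[\gamma_1]*[\gamma_2]}$, to
\begin{equation*}
	\Lambda(\widehat{n}, [\gamma_1]*[\gamma_2]) = \Lambda(\widehat{n}, [\gamma_1])\,\Lambda(\widehat{n}, [\gamma_2])\, B(n, [\gamma_1], [\gamma_2])^{-1}.
\end{equation*}
The two factors $\Lambda(\widehat{n},[\gamma_i])$ are smooth in $\widehat{n}$ by the hypotheses together with the reduction of the previous paragraph, while $B(n,[\gamma_1],[\gamma_2])$ depends on $\widehat{n}$ only through $n$ and is a smooth $\Ab$-valued function of $n$ — this is precisely the smoothness of $B$ established in the proof of Proposition~\ref{prop:CoreTechnicalResult}. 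As multiplication and inversion in $\Ab$ are smooth, the right-hand side is smooth, hence so is $\widehat{n} \mapsto \Lambda(\widehat{n}, [\gamma_1]*[\gamma_2])$; by the reduction, $\widehat{N} \to \widehat{N}_{\comp{\gamma_1 \star \gamma_2}}$ is smooth.

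The one bookkeeping point I would be most careful about is the coherence of section choices: the identity \eqref{DefB} and the smoothness of $B$ are attached to a single fixed system of local sections $\sigma_{\comp{\gamma}}$, and the three cocycles entering the displayed equation must be taken with respect to that same system. This is harmless, however, since replacing a section alters $\Lambda(\widehat{n},[\gamma])$ only by a smooth $\Ab$-valued function of $n$, so the smoothness of each $\Lambda(\widehat{n},\,\cdot\,)$ is independent of the choice; moreover $\Lambda(\widehat{n},\,\cdot\,)$ is a class function, which guarantees that the cocycles for the chosen fusion triple coincide with those appearing in the hypotheses. Thus the hypotheses, phrased through arbitrary sections, feed directly into the fusion identity phrased through the coherent system.
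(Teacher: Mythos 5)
Your proof is correct, and it takes a recognizably different route from the paper's. Both proofs start with the same first reduction --- by embeddedness (Proposition~\ref{prop:CoreTechnicalResult}) it suffices to show that $\widehat{N}\rightarrow N^{\sharp}_{\comp{\gamma_1\star\gamma_2}}$ is smooth --- but from there the paper argues geometrically: by the construction of the smooth structure on $N^{\sharp}_{\comp{\gamma_1\star\gamma_2}}$ in Proposition~\ref{Prop:vasteklasse}, it is enough that a single orbit map $\widehat{n}\mapsto\widehat{n}q$ be smooth, and the paper takes $q=\mu(p_1\otimes p_2)$ for points $p_1,p_2$ of the transgressed bundle over the fusion triple $(\gamma_1,*,\gamma_2)\in PM^{[3]}$; compatibility of $\widehat{n}$ with $\mu$ then factors this orbit map through the orbit maps over $\comp{\gamma_1}$ and $\comp{\gamma_2}$ (smooth by hypothesis) followed by the diffeologically smooth map $\mu$, and the full faithfulness of Fr\'echet manifolds inside diffeological spaces concludes. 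Your proof instead stays in the cocycle formalism: your reduction of smoothness of $\widehat{N}\rightarrow\widehat{N}_{\comp{\gamma}}$ to smoothness of $\widehat{n}\mapsto\Lambda(\widehat{n},[\gamma])$ near $\one$ is valid (a homomorphism of Fr\'echet--Lie groups that is smooth near the identity is smooth everywhere, by translation, and corestriction to an embedded subgroup preserves smoothness), and your displayed identity is precisely the rearrangement of \eqref{DefB}, whose correction term $B(n,[\gamma_1],[\gamma_2])$ was shown to be smooth in $n$ in the proof of Proposition~\ref{prop:CoreTechnicalResult}. In effect you import the analytic content --- diffeological smoothness of the fusion product plus the Fr\'echet-into-diffeological embedding --- once and for all through $B$, and the remainder of your argument is purely formal; the paper re-runs that analytic argument directly on orbit maps, which spares it the section-coherence bookkeeping that you correctly identify and dispose of in your final paragraph. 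The difference is one of packaging rather than substance, but your version has the merit of deriving the lemma entirely from facts already established, with no fresh appeal to diffeology, while the paper's is shorter because it needs no local sections at all.
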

\begin{proof}
	By Proposition \ref{prop:CoreTechnicalResult}, $\widehat{N}_{\langle \gamma_{1} \star \gamma_{2} \rangle }$ is an embedded subgroup of $N^{\sharp}_{\langle \gamma_{1} \star \gamma_{2} \rangle }$.
	It thus suffices to prove that the induced map $\widehat{N} \rightarrow N^{\sharp}_{\langle \gamma_{1} \star \gamma_{2} \rangle }$ is smooth.
	By definition of the smooth structure on $N^{\sharp}_{\langle \gamma_{1} \star \gamma_{2} \rangle }$ (see Proposition \ref{Prop:vasteklasse}) it suffices to find an element $p \in \mc{T}(\mc{G})|_{\langle \gamma_{1} \star \gamma_{2} \rangle}$ such that the orbit map ${\widehat{N} \rightarrow \mc{T}(\mc{G})|_{ \langle \gamma_{1} \star \gamma_{2} \rangle }}$ is smooth.

	To this end, we choose representatives $\gamma_{1}$ and $\gamma_{2}$ of $[\gamma_{1}]$ and $[\gamma_{2}]$, respectively.
	We view $\gamma_{1}$ and $\gamma_{2}$ as paths, and we observe that $(\gamma_{1},*,\gamma_{2}) \in PM^{[3]}$, where $*$ is the constant path.
	Pick $p_{1} \in (\pr_{12}^{*}\mc{T}(\mc{G}))_{(\gamma_{1},*)}$ and $p_{2} \in \pr_{23}^{*}(\mc{T}(\mc{G}))_{(*,\gamma_{2})}$.
	Set $q = \mu(p_{1} \otimes p_{2}) \in (\pr_{13}^{*}\mc{T}(\mc{G}))_{(\gamma_{1},\gamma_{2})} = \mc{T}(\mc{G})_{\gamma_{1} \star \gamma_{2}}$.
	The compatibility of $\hat{n}$ with the fusion product $\mu$ then implies that the orbit map $\hat{n} \mapsto \hat{n}q$ factors as follows:
	\begin{equation*}
		\begin{tikzcd}
			\hat{N} \ar[r] & \pr_{12}^{*}\mc{T}(\mc{G})|_{\langle \gamma_{1} \rangle } \otimes \pr_{23}^{*}\mc{T}(\mc{G})|_{\langle \gamma_{2} \rangle } \ar[r, "\mu"] & \pr_{13}^{*}\mc{T}(\mc{G})|_{\langle \gamma_{1} \star \gamma_{2} \rangle} \ar[r] & \mc{T}(\mc{G})|_{\langle \gamma_{1} \star \gamma_{2} \rangle}.
		\end{tikzcd} 
	\end{equation*}
	Since all these maps are smooth in the diffeological sense, and since $\hat{N}$ and $\mc{T}(\mc{G})|_{\langle \gamma_{1} \star \gamma_{2} \rangle}$ 
	are Fr\'echet manifolds, their concatenation is a smooth map in the category of Fr\'echet manifolds.
\end{proof}

\begin{proposition}\label{prop:GeneratorIndependence}
	The manifold structure of $\widehat{N}$ does not depend on the choice of generators.
	Moreover, the map $\widehat{N} \rightarrow \widehat{N}_{\comp{\gamma}}$ is smooth for every connected component $\comp{\gamma} \subseteq LM$.
\end{proposition}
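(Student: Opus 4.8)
The plan is to prove the two assertions in the opposite order from how they are stated: I will first establish the smoothness of $\widehat{N}\to\widehat{N}_{\comp{\gamma}}$ for \emph{every} component, and then deduce independence of the smooth structure as a formal consequence. To begin, I fix one choice of generators $[\alpha_i]\in H_1(M,\Z)$ with lifts $[\gamma_i]\in\pi_1(M)$, $i=1,\dots,k$, and work with the Fr\'echet--Lie group structure these induce on $\widehat{N}$ through the isomorphism $\widehat{N}\xrightarrow{\sim}\widehat{N}'$ of \eqref{diag:niceEquivalence}. With respect to this structure the maps $\widehat{N}\to\widehat{N}_{\comp{\gamma_i}}$ are smooth for the chosen generators, being the composite of $\widehat{N}\xrightarrow{\sim}\widehat{N}'=\widehat{N}_{\comp{\gamma_1}}\times_N\cdots\times_N\widehat{N}_{\comp{\gamma_k}}$ with the fibre-product projections. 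This is the base case; the rest of the argument bootstraps smoothness from the generators to all of $\pi_1(M)$.

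I then introduce the set
\[
S:=\{[\gamma]\in\pi_1(M)\ :\ \widehat{N}\to\widehat{N}_{\comp{\gamma}}\text{ is smooth}\},
\]
which is well defined because $\widehat{N}_{\comp{\gamma}}$ depends only on the component $\comp{\gamma}$, i.e.\ on the conjugacy class of $[\gamma]$; in particular $S$ is a union of conjugacy classes. By Lemma~\ref{lem:SmoothCommutators} the entire commutator subgroup $[\pi_1(M),\pi_1(M)]$ lies in $S$, and in particular the class of the constant loop $*$ lies in $S$. Now I invoke Lemma~\ref{lem:SmoothProducts}, recalling from its proof that $\comp{\gamma_1\star\gamma_2}=\comp{\overline{\gamma}_2*\gamma_1}$, so that the induced operation on classes is $([\gamma_1],[\gamma_2])\mapsto[\gamma_1][\gamma_2]^{-1}$ up to conjugacy. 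Taking $\gamma_1=*$ shows that $S$ is closed under inversion, and applying the lemma to $[\gamma_1]$ and $[\gamma_2]^{-1}$ then shows that $S$ is closed under products. Hence $S$ is a subgroup of $\pi_1(M)$ containing both $[\pi_1(M),\pi_1(M)]$ and every $[\gamma_i]$; since the images $[\alpha_i]$ generate $H_1(M,\Z)=\pi_1(M)/[\pi_1(M),\pi_1(M)]$, we conclude $S=\pi_1(M)$. This proves that $\widehat{N}\to\widehat{N}_{\comp{\gamma}}$ is smooth for every component $\comp{\gamma}$.

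For the independence statement I compare the structures $\mathcal{S}$ and $\mathcal{S}'$ arising from two choices of generators $\{[\gamma_i]\}$ and $\{[\gamma_j']\}$, realized through $\widehat{N}'$ and the analogous fibre product $\widehat{N}''=\widehat{N}_{\comp{\gamma_1'}}\times_N\cdots\times_N\widehat{N}_{\comp{\gamma_{k'}'}}$, respectively. Applying the smoothness statement just proved to $\mathcal{S}$, each map $\widehat{N}\to\widehat{N}_{\comp{\gamma_j'}}$ is $\mathcal{S}$-smooth; since every $\widehat{N}_{\comp{\gamma_j'}}\to N$ is a submersion, the universal property of the fibre product makes the identity $(\widehat{N},\mathcal{S})\to\widehat{N}''\cong(\widehat{N},\mathcal{S}')$ smooth. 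By symmetry its inverse is smooth as well, so $\mathcal{S}=\mathcal{S}'$, and the manifold structure is independent of the chosen generators.

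The main obstacle I anticipate is ensuring that the bootstrapping reaches \emph{all} of $\pi_1(M)$ rather than only the subsemigroup generated by the $[\gamma_i]$ and the commutators: the infinite-order generators must be inverted. This is precisely what the form $\overline{\gamma}_2*\gamma_1$ of the fusion concatenation in Lemma~\ref{lem:SmoothProducts} supplies, via the constant-loop trick; once inversion is available, closure under products together with the fact that the $[\alpha_i]$ generate $H_1(M,\Z)$ completes the argument. A minor point worth verifying is that both lemmas apply verbatim to whichever fixed Fr\'echet structure on $\widehat{N}$ one starts from, which is the case because they use only that $\widehat{N}$ is a Fr\'echet manifold with $\widehat{N}\to N$ smooth.
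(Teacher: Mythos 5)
Your proposal is correct and takes essentially the same route as the paper: both arguments bootstrap smoothness of $\widehat{N}\rightarrow\widehat{N}_{\comp{\gamma}}$ from the chosen generators to all of $\pi_{1}(M)$ using Lemma~\ref{lem:SmoothCommutators} (for the commutator subgroup) and Lemma~\ref{lem:SmoothProducts} (to combine classes), and then obtain generator-independence by observing that the resulting map into the fibre product built from any other set of generators is smooth. The only difference is presentational: you package the bootstrap as showing that the set $S$ of smooth classes is a subgroup, and in doing so you make explicit the inversion step (via the constant loop and the fusion form $\overline{\gamma}_{2}*\gamma_{1}$, giving the operation $([\gamma_1],[\gamma_2])\mapsto[\gamma_1][\gamma_2]^{-1}$ up to conjugacy) that the paper's phrase ``repeated application of Lemma~\ref{lem:SmoothProducts}'' leaves implicit when the exponents $n_i$ in $a=[\gamma_1]^{n_1}\cdots[\gamma_j]^{n_j}$ are negative.
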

\begin{proof}
	Let $\langle \gamma \rangle \in \pi_{0}(LM)$ be arbitrary.
	Pick a representative $[\gamma] \in \pi_{1}(M)$ of $\langle \gamma \rangle$.
	Write $[[\gamma]] = [\gamma][\pi_{1}(M),\pi_{1}(M)] \in H_{1}(M,\Z)$ in terms of the generators $[[\gamma_{i}]]$, i.e. $[[\gamma]] = \sum_{i=1}^{j}n_i [[\gamma_i]]$.
	Then, the element $a = [\gamma_1]^{n_1} \cdots [\gamma_{j}]^{n_{j}}$ of $\pi_1(M)$ satisfies $[a] = [[\gamma]]$ in $H^1(M,\Z)$.

	It follows that there exists an element $c \in [\pi_{1}(M),\pi_{1}(M)]$ such that $ac = [\gamma]$.
	Now, repeated application of Lemma \ref{lem:SmoothProducts} implies that the map $\hat{N} \rightarrow \hat{N}_{\langle a \rangle }$ is smooth.
	Lemma \ref{lem:SmoothCommutators} implies that the map $\hat{N} \rightarrow \hat{N}_{\langle c \rangle }$ is smooth.
	A final application of Lemma \ref{lem:SmoothProducts} then tells us that the map $\hat{N} \rightarrow \hat{N}_{\langle a c \rangle } = \hat{N}_{\langle \gamma \rangle }$ is smooth.

	Given another set of generators $[\alpha_{i}'] \in H_{1}(M,\Z)$, and lifts $[\gamma'_{i}] \in \pi_{1}(M)$ we then see from the above that the map $\widehat{N} \rightarrow \widehat{N}_{\comp{\gamma_{1}'}} \times_{N} ... \times_{N} \widehat{N}_{\comp{\gamma_{k}'}}$ is smooth, and the result follows.
\end{proof}

From Theorem \ref{Thm:centralcase}, Theorem \hyperlink{target:A}{A} can easily be deduced:
\begin{proof}[Proof of Theorem A]
If $G$ is a Fr\'echet--Lie group which does not necessarily act trivially on $\pi_0(LM)$, then the subgroup $N\subseteq G$ that fixes $\pi_0(LM)$ is open, and hence 
a Fr\'echet--Lie subgroup of $G$. 
Since $K = H^1(M,\Ab)$ is central in $\widehat{N}$, the conjugation action of $\widehat{G}$ on $\widehat{N}$ factors through $G$.
 The action of $G$ on $K$ is smooth by Corollary~\ref{cor:ConjugationOnH1}, and the action of $G$ on $N$ is smooth because $N$ is a Lie subgroup.
It follows that the conjugation action of $\widehat{G}$ on $\widehat{N}$ is smooth, so that $\widehat{G}$ is a Lie group.
Since $N \subseteq G$ is open, both groups have the same Lie algebra.
This concludes the proof of Theorem~\hyperlink{target:A}{A}.
\end{proof}

\subsection{Functoriality of the Fr\'echet--Lie group extension}

The smooth structure on the abelian extension is functorial in the following sense.
Let \[\Pi = (\pi, f, \Lambda): \; \Big(G \curvearrowright (M,\mc{G}_{M})\Big) \rightarrow \Big(H \curvearrowright (N,\mc{G}_{N})\Big)\] be a morphism 
of group actions as defined in Section \ref{sec:functoriality}.
\begin{proposition}\label{prop:smoothfunctor}
	The map $\widehat{\Pi}: \widehat{G}_{\mc{G}_{M}} \rightarrow \widehat{H}_{\mc{G}_{N}}$ defined in Equation \eqref{eq:functorMorphism} is a morphism in the category of Fr\'{e}chet-Lie central extensions.
\end{proposition}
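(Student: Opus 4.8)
The plan is to leverage what is already established in Proposition~\ref{prop:DiscreteFunctoriality}: $\widehat{\Pi}$ is a homomorphism of abstract groups and a morphism of abelian extensions, reducing to $A \mapsto \Lambda^{-1}\circ f^*A\circ \Lambda$ on the kernel. What remains is smoothness. Since a homomorphism of Lie groups is smooth as soon as it is smooth on an identity neighbourhood, I would reduce the whole question to smoothness near the identity. There the smooth structure of $\widehat{G}_{\cG_M}$ is that of $\widehat{G'}$, where $G' \subseteq G$ is the open subgroup acting trivially on $\pi_0(LM)$, and likewise the target is locally $\widehat{H'}$ for the open subgroup $H' \subseteq H$ acting trivially on $\pi_0(LN)$. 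As $\pi$ is continuous and $H'$ is open, the set $\tilde{G} := G' \cap \pi^{-1}(H')$ is an open subgroup of $G$ on which $\pi$ takes values in $H'$, so $\widehat{\Pi}$ maps $\widehat{\tilde{G}}$ into $\widehat{H'}$. Because $\widehat{\tilde{G}} \subseteq \widehat{G}_{\cG_M}$ is open, it suffices to prove that $\widehat{\Pi}\colon \widehat{\tilde{G}} \rightarrow \widehat{H'}$ is smooth.

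To test smoothness into $\widehat{H'}$, I would use its description from this section: after choosing lifts $[\delta_j]\in\pi_1(N)$ of generators of $H_1(N,\Z)$, the equivalence \eqref{diag:niceEquivalence} identifies $\widehat{H'}$ with the fibre product $\widehat{H'}_{\comp{\delta_1}}\times_{H'}\cdots\times_{H'}\widehat{H'}_{\comp{\delta_k}}$, and each $\widehat{H'}_{\comp{\delta_j}}$ is an embedded Lie subgroup of $H^{\sharp}_{\comp{\delta_j}}$ by Proposition~\ref{prop:CoreTechnicalResult}. A map into such a fibre product is smooth precisely when each of its components into $H^{\sharp}_{\comp{\delta_j}}$ is smooth, given that the map already lands in $\widehat{H'}$ (which we know from Proposition~\ref{prop:DiscreteFunctoriality}). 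Thus the task becomes: for each generator $\delta=\delta_j$, show that the composite $\widehat{\tilde{G}} \rightarrow H^{\sharp}_{\comp{\delta}}$ is smooth.

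Here I would mimic the orbit-map argument in the proof of Lemma~\ref{lem:SmoothProducts}. By the construction of the Lie group structure on $H^{\sharp}_{\comp{\delta}}$ in Proposition~\ref{Prop:vasteklasse}, it suffices to exhibit a point $q \in \tgress(\cG_N)|_{\comp{\delta}}$ whose orbit map $(g,A)\mapsto \Phi_{\widehat{\Pi}(g,A)}(q)$ is smooth. Passing to the transgression picture (Proposition~\ref{prop:GroupTransgression}) and using naturality of transgression with respect to $f$ (Proposition~\ref{prop:transgressionAndPullback}), the isomorphism $\Lambda\colon \cG_N \rightarrow f^*\cG_M$ transgresses to a connection- and fusion-preserving bundle isomorphism $\tgress(\Lambda)\colon \tgress(\cG_N) \xrightarrow{\sim} (Lf)^*\tgress(\cG_M)$ over $LN$. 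Composing with the canonical map $(Lf)_*$ and invoking $G$-equivariance of $f$ (so that $Lf\circ L\action_{\pi(g)} = L\action_{g}\circ Lf$), one checks that the orbit map factors as
\begin{equation*}
	(g,A) \;\longmapsto\; \Psi_{(g,A)}(p) \;\longmapsto\; \tgress(\Lambda)^{-1}\big((Lf)_*^{-1}\Psi_{(g,A)}(p)\big),
\end{equation*}
where $p := (Lf)_*\tgress(\Lambda)(q) \in \tgress(\cG_M)|_{\comp{f\circ\delta}}$ and $\Psi_{(g,A)} = (L\action_{g})_*\circ \tgress(A)^{-1}$ is the transgressed automorphism of $\tgress(\cG_M)$. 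The first arrow is the orbit map of the image of $(g,A)$ under the smooth homomorphism $\widehat{\tilde{G}} \hookrightarrow \widehat{G'} \rightarrow G^{\sharp}_{\comp{f\circ\delta}}$ (smooth by Propositions~\ref{prop:GeneratorIndependence} and~\ref{prop:CoreTechnicalResult}) composed with the smooth action of $G^{\sharp}_{\comp{f\circ\delta}}$ on $\tgress(\cG_M)|_{\comp{f\circ\delta}}$ (Theorem~\ref{Thm:NeebVizman2003adaptation}), hence smooth; the second arrow is a fixed isomorphism of diffeological, and therefore Fr\'echet, bundles, hence smooth. This establishes smoothness of the orbit map, and with it smoothness of $\widehat{\Pi}$.

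The main obstacle, I expect, is the middle step: correctly identifying $\Phi_{\widehat{\Pi}(g,A)}$ as the conjugate $\tgress(\Lambda)^{-1}\circ (Lf)^*\Psi_{(g,A)}\circ \tgress(\Lambda)$, and thereby recognizing that the orbit map over $\comp{\delta}\subseteq LN$ factors through the orbit map over the \emph{source} component $\comp{f\circ\delta}\subseteq LM$, where smoothness is already available. This is a diagram chase combining the explicit formula~\eqref{eq:functorMorphism} with the functoriality identities~\eqref{eq:PullbackFunctor},~\eqref{eq:PullbackIdentity} and Proposition~\ref{prop:transgressionAndPullback}; once it is in place, smoothness is assembled mechanically from the smoothness of the orbit maps and group actions cited above. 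Combining this with Proposition~\ref{prop:DiscreteFunctoriality} then shows that $\widehat{\Pi}$ is a morphism of abelian extensions in the smooth category, i.e.\ a morphism of Fr\'echet--Lie central extensions, as claimed.
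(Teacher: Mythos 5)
Your proposal is correct and follows essentially the same route as the paper's proof: reduce to smoothness near the identity, use the fibre-product description of the target over generators of $H_1(N,\Z)$, and establish smoothness of each component into $H^{\sharp}_{\comp{\delta_j}}$ by showing that the transgressed automorphism of $\widehat{\Pi}(g,A)$ is the $\tgress(\Lambda)$- and $(Lf)_*$-conjugate of that of $(g,A)$, so the orbit map over $\comp{\delta_j}\subseteq LN$ factors through the orbit map over $\comp{f\circ\delta_j}\subseteq LM$, which is smooth by Propositions~\ref{prop:GeneratorIndependence} and~\ref{prop:CoreTechnicalResult}. This is exactly the paper's computation $\pi_{*}\tgress_{L}(g,A)=\tgress_{L}\widehat{\Pi}(g,A)$; your only (harmless) refinement is intersecting with $\pi^{-1}(H')$ to ensure the base homomorphism lands in the correct open subgroup.
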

\begin{proof}
 By Proposition \ref{prop:DiscreteFunctoriality} it suffices to prove that $\widehat{\Pi}$ is smooth.

Let $N_{G} \subseteq G$  and $N_{H} \subseteq H$ be the normal subgroups that act trivially on $\pi_0(LM)$, and $\pi_{0}(LN)$ respectively.
Let $\gamma_{N} \in LN$ be a loop, and set $\gamma_{M} = Lf(\gamma_{N}) \in LM$.
Fix $q \in \tgress(\mc{G}_{N})_{\gamma}$, and set $p = Lf_{*} \tgress(\Lambda)q \in \tgress(\mc{G}_{M})_{\gamma_{M}}$.
We then make use of the following identification of principal $\Ab$-bundles over $N_{G}$
\begin{align*}
	N^{\sharp}_{G,\gamma_{M}} &\xrightarrow{\sim} \alpha^{*}_{\gamma_{M}} \tgress(\mc{G}_{M}), & (g,\nu) &\mapsto (g, \nu(p)).
\end{align*}
Indeed, this identification is how $N^{\sharp}_{G,\gamma_{M}}$ is equipped with a smooth structure in the first place (see Proposition \ref{Prop:vasteklasse} and \cite{NV03}).

We have a commutative diagram
\begin{equation*}
	\begin{tikzcd}
		N^{\sharp}_{G,\gamma_{M}} \ar[r] \ar[d] &G \ar[r,"\pi"] \ar[d, "\alpha_{\gamma_{M}}"] & H \ar[d, "\alpha_{\gamma_{N}}"] & N^{\sharp}_{H,\gamma_{N}} \ar[l] \ar[d] \\
		\tgress(\mc{G}_{M}) \ar[r] & LM & LN \ar[l, "Lf"] & \tgress(\mc{G}_{N}). \ar[l]
	\end{tikzcd}
\end{equation*}
By way of the isomorphism $\tgress(\Lambda): \tgress(\mc{G}_{N}) \rightarrow Lf^{*} \mc{T}(\mc{G}_{M})$, the diagram above allows us to identify $N^{\sharp}_{G, \gamma_{M}} \cong \pi^{*} N^{\sharp}_{H,\gamma_{N}}$, which induces a smooth pushforward map $\pi_{*}:N^{\sharp}_{G,\gamma_{M}} \rightarrow N^{\sharp}_{H,\gamma_{N}}$.
Identifying $N^{\sharp}_{H,\gamma_{N}}$ with $\alpha^{*}_{\gamma_{N}}\tgress(\mc{G}_{N})$, one obtains a smooth map
\begin{equation*}
	\pi_{*}:N^{\sharp}_{G,\gamma_{M}} \rightarrow \alpha^{*}_{\gamma_{N}}\tgress(\mc{G}_{N}), \quad (g,\nu) \mapsto (\pi(g),\tgress(\Lambda)^{-1}(\pi(g)\gamma_{N},\nu(p))).
\end{equation*}

We claim that the restriction of the map $\pi_{*}$ to $\widehat{N}_{G, \gamma_{M}}$ is the restriction of $\widehat{\Pi}$ to $\widehat{N}_{G, \gamma_{M}}$.
To compare $\pi_{*}$ to $\widehat{\Pi}$ as defined in Equation \eqref{eq:functorMorphism} we make use of Proposition \ref{prop:GroupTransgression}.
Our goal is to prove that $\pi_{*} \tgress_{L}(g,A) = \tgress_{L} \widehat{\Pi}(g,A)$.
Indeed
\begin{align*}
	\pi_{*} \tgress_{L}(g,A) &= \pi_{*} (g, (L\rho_{g})_{*} \circ \tgress(A)^{-1}) \\
	&= (\pi(g), \tgress(\Lambda)^{-1} (\pi(g)\gamma_{N}, (L\rho_{g})_{*} \circ \tgress(A)^{-1} p)) \\
	&= (\pi(g), \tgress(\Lambda)^{-1} (\pi(g)\gamma_{N}, (L\rho_{g})_{*} \circ \tgress(A)^{-1} \circ Lf_{*} \circ \tgress(\Lambda) q)) \\
	&= (\pi(g), \tgress(\Lambda)^{-1} (\pi(g)\gamma_{N}, Lf_{*} \circ (L\rho_{\pi(g)})_{*}  \circ Lf^{*} \tgress(A)^{-1} \circ \tgress(\Lambda) q)) \\
	&= (\pi(g), \tgress(\Lambda)^{-1} \circ (L\rho_{\pi(g)})_{*}  \circ Lf^{*} \tgress(A)^{-1} \circ \tgress(\Lambda) q)
\end{align*}

On the other hand
\begin{align*}
	\tgress_{L} \widehat{\Pi}(g,A) &= \tgress_{L} (\pi(g), \Lambda^{-1} \circ f^{*}A \circ \rho_{\pi(g)}^{*} \Lambda)\\
	& = (\pi(g), (L\rho_{\pi(g)})_{*} \circ \tgress(\Lambda^{-1} \circ f^{*}A \circ \rho_{\pi(g)}^{*} \Lambda)^{-1}) \\
	&= (\pi(g), ((L\rho_{\pi(g)})_{*} \circ L\rho_{\pi(g)}^{*} \tgress(\Lambda)^{-1} \circ Lf^{*} \tgress(A)^{-1} \circ  \tgress(\Lambda)) \\
	&= (\pi(g), \tgress(\Lambda)^{-1} \circ (L\rho_{\pi(g)})_{*} \circ Lf^{*} \tgress(A)^{-1} \circ \tgress(\Lambda)).
\end{align*}

Since $\widehat{N}_{H, \gamma_{N}} \subseteq N^{\sharp}_{H,\gamma_{N}}$ is an embedded subbundle, and since the image of $\widehat{N}_{G,\gamma_{M}} \subseteq N^{\sharp}_{G, \gamma_{M}}$ under the pushforward map is contained in $\widehat{N}_{H, \gamma_{N}} \subseteq N^{\sharp}_{H, \gamma_{N}}$, the pushforward $\pi_{*}$ restricts to a smooth map $\pi_{*} \colon \widehat{N}_{G, \gamma_{M}} \rightarrow \widehat{N}_{H, \gamma_{N}}$ of principal bundles.

Now, suppose that we have generators $\alpha_{i}  \in H_{1}(N,\Z)$ and loops $\gamma_{i} \in LN$ such that $[[\gamma_{i}]] = \alpha_{i}$.
Set $\gamma_{i}' = Lf (\gamma_{i}) \in LM$.
We then have a commutative diagram
\begin{equation*}
	\begin{tikzcd}
		\widehat{N}_{G} \ar[rr,"\widehat{\Pi}"] \ar[rd] & & \widehat{N}_{H} = \widehat{N}_{H,\gamma_{1}} \times_{N_{H}} \ldots \times_{N_{H}} \widehat{N}_{H,\gamma_{k}} \\
		& \widehat{N}_{G, \gamma_{1}'} \times_{N_{G}} \ldots \times_{N_{G}} \widehat{N}_{G,\gamma_{k}'} \ar[ru, "\pi_{*}"'] &
	\end{tikzcd}
\end{equation*}
It thus follows that $\widehat{\Pi}$ is a homomorphism of Lie groups, which restricts to a smooth map on the connected component of the identity, and is thus smooth on the the entire Lie group.
\end{proof}

Specializing to the situation that $\mc{G}_{M} = \mc{G}_{N}$, and $M=N$, and $f= \1_{M}$, (cf.~Corollary \ref{functorialGp}) we obtain the following result.

\begin{corollary}\label{cor:smoothfunctorGp}
Suppose that $G$ acts smoothly on $M$ in a way that preserves the isomorphism class of a gerbe $\cG$.
If $\pi \colon H \rightarrow G$ is a smooth homomorphism of Fr\'echet--Lie groups,
then the induced morphism $\widehat{\pi} \colon \widehat{H}_{\cG} \rightarrow \widehat{G}_{\cG}$ of abelian extensions is smooth as well.
Moreover, if $\pi \colon H \hookrightarrow G$ is an inclusion of an embedded subgroup, then so is
$\widehat{\pi} \colon \widehat{H}_{\cG} \hookrightarrow \widehat{G}_{\cG}$.
\end{corollary}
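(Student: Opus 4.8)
The plan is to derive both assertions from Proposition~\ref{prop:smoothfunctor}, supplemented by the pullback description of a central extension. Smoothness of $\widehat{\pi}$ costs nothing beyond Proposition~\ref{prop:smoothfunctor}: I would apply it to the morphism of group actions $\Pi = (\pi, \1_M, \1)$ from $H \curvearrowright (M,\cG)$ to $G \curvearrowright (M,\cG)$, where $H$ acts on $M$ through $\pi$ (the equivariance condition $f \circ \rho_{\pi(h)} = \rho_h \circ f$ with $f = \1_M$ is precisely the statement that the $H$-action factors through $\pi$). By Corollary~\ref{functorialGp} the induced map is $\widehat{\pi}(h,A) = (\pi(h), A)$, and Proposition~\ref{prop:smoothfunctor} guarantees that it is a morphism of Fr\'echet--Lie central extensions; in particular $\widehat{\pi}$ is a smooth homomorphism.

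For the embeddedness claim, the main step is to identify $\widehat{H}_{\cG}$ with the pullback extension $\pi^{*}\widehat{G}_{\cG}$. Writing $p_H \colon \widehat{H}_{\cG} \to H$ and $p_G \colon \widehat{G}_{\cG} \to G$ for the two projections, I would consider the map $\Theta \colon \widehat{H}_{\cG} \to H \times_G \widehat{G}_{\cG}$, $\hat{h} \mapsto (p_H(\hat{h}), \widehat{\pi}(\hat{h}))$. Both components are smooth, so $\Theta$ is smooth, and by Corollary~\ref{functorialGp} it covers $\mathrm{id}_H$ and restricts to the identity on the kernel $H^1(M,\Ab)$; the short five lemma then makes $\Theta$ bijective. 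To promote this to a diffeomorphism I would use that $p_G$ is a locally trivial principal $H^1(M,\Ab)$-bundle (a Fr\'echet--Lie central extension admits a smooth local section at the identity, hence everywhere by translation). Then $H \times_G \widehat{G}_{\cG}$ is the pullback bundle $\pi^{*}\widehat{G}_{\cG}$, again a Fr\'echet--Lie group and a locally trivial principal $H^1(M,\Ab)$-bundle over $H$, and $\Theta$ --- being a smooth $H^1(M,\Ab)$-equivariant bundle map over $\mathrm{id}_H$ --- is automatically an isomorphism of principal bundles (trivialize locally and invert the resulting smooth transition map).

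It then remains to observe that $\mathrm{pr}_2 \colon \pi^{*}\widehat{G}_{\cG} \to \widehat{G}_{\cG}$ is an embedded subgroup inclusion whenever $\pi \colon H \hookrightarrow G$ is one: its image is $p_G^{-1}(\pi(H))$, which is embedded because $\pi(H) \subseteq G$ is embedded and $p_G$ is a submersion, and $\mathrm{pr}_2$ is a diffeomorphism onto it with smooth inverse $\hat{g} \mapsto (\pi^{-1}(p_G(\hat{g})), \hat{g})$ (here $\pi^{-1}$ is smooth on $\pi(H)$ because $\pi$ is an embedding). Since $\widehat{\pi} = \mathrm{pr}_2 \circ \Theta$ with $\Theta$ a diffeomorphism, $\widehat{\pi}$ is an embedded subgroup inclusion. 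The one genuinely non-formal point, which I expect to be the main obstacle, is the local triviality of $\widehat{G}_{\cG} \to G$ as a principal bundle: both the existence of the pullback and the bundle-isomorphism step rest on it. Everything else is a diagram chase combined with the standard behaviour of pullback bundles under submersions.
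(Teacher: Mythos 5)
Your proposal is correct, and for the smoothness claim it coincides with the paper's proof: both reduce to Proposition~\ref{prop:smoothfunctor} applied to the morphism $(\pi,\1_{M},\1)$. For the embeddedness claim you take a genuinely different route. The paper re-enters the construction of the Lie group structure: from a slice chart for $H\subseteq G$ and a local trivialization of $\tgress(\cG)$ around $n\cdot\gamma\in LM$ it manufactures a slice chart for $N^{\sharp}_{H,\gamma}\subseteq N^{\sharp}_{G,\gamma}$, then argues separately when $[[\gamma]]$ has infinite order (so that $\widehat{N}_{H,\gamma}=N^{\sharp}_{H,\gamma}$ is embedded in $\widehat{N}_{G,\gamma}=N^{\sharp}_{G,\gamma}$) and finite order (where $\widehat{N}_{G,\gamma}\rightarrow N_{G}$ is a discrete cover), and finally concludes because the fibre products \eqref{eq:defNhat} are open subgroups of $\widehat{H}_{\cG}$ and $\widehat{G}_{\cG}$. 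You instead prove the abstract statement that $\widehat{H}_{\cG}\cong\pi^{*}\widehat{G}_{\cG}$ as Lie group extensions (five lemma plus an equivariant bundle map over $\mathrm{id}_{H}$), and that the pullback along an embedding includes as the embedded subgroup $p_{G}^{-1}(\pi(H))$. The hypothesis you flag as the main obstacle --- local triviality of $p_{G}\colon\widehat{G}_{\cG}\rightarrow G$ as a principal $H^{1}(M,\Ab)$-bundle --- is in fact secured by the paper: Proposition~\ref{prop:CoreTechnicalResult} constructs smooth local sections of $N^{\sharp}_{\comp{\gamma}}\rightarrow N$ with values in $\widehat{N}_{\comp{\gamma}}$, these assemble into a smooth local section of the fibre product \eqref{eq:defNhat} and hence of $\widehat{N}\rightarrow N$, and left translation propagates local sections over all of $G$; this is also the standard (Neeb) meaning of ``extension of Fr\'echet--Lie groups'' in the framework the paper uses. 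With that citation your argument closes. The trade-off: your route is modular --- it needs only the statement of Theorem \hyperlink{target:A}{A} and the functoriality of Proposition~\ref{prop:smoothfunctor}, treats the extension as a black box, and bypasses the finite/infinite-order case distinction entirely --- whereas the paper's route is self-contained at chart level and produces explicit compatible slice charts for the building blocks $N^{\sharp}_{H,\gamma}\subseteq N^{\sharp}_{G,\gamma}$, never needing to formulate local triviality of the assembled extension. One cosmetic correction: the embeddedness of $p_{G}^{-1}(\pi(H))$ should be deduced from local triviality of $p_{G}$, not from $p_{G}$ being a ``submersion''; in the Fr\'echet category submersions alone do not yield preimage theorems, but since you trivialize locally anyway this costs nothing.
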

\begin{proof}
	Apart from the claim that $\widehat{\pi}$ is an embedding if $\pi$ is so, this follows directly from Proposition \ref{prop:smoothfunctor}.

	Now, assume that $\pi: H \hookrightarrow G$ is an inclusion of an embedded Fr\'{e}chet--Lie group.
	We use the notation from the proof of Proposition \ref{prop:smoothfunctor}, except that we drop the subscript $M$.
	Let $n \in H \cap N_{G}$ be arbitrary, and let $\phi: G \supseteq U_{G} \xrightarrow{\sim} V_{G}$ be a slice chart for $H\subseteq G$ around $n$.
	Let $\gamma \in LM$, and let $\chi: \tgress(\mc{G})|_{W} \rightarrow W \times \Ab$ be a local trivialization around $n \cdot \gamma \in W \subseteq LM$.
	Denote by $\chi_{2}: \tgress(\mc{G})|_{W} \rightarrow \Ab$ the second component of $\chi$.
	We then obtain a local trivialization $(\mathrm{Id}_{W'},\chi_{2}): \alpha_{\gamma}^{*} \tgress(\mc{G})|_{W'} \rightarrow W' \times \Ab$, where $W' = \alpha_{\gamma}^{-1}(W) \cap U_{G} \subseteq G$.
	Using the identification $N_{G,\gamma}^{\sharp} \simeq \alpha^{*}_{\gamma}\tgress(\mc{G})$ we obtain a slice chart $(\phi,\chi_{2})$ for $N^{\sharp}_{H,\gamma} \subseteq N^{\sharp}_{G,\gamma}$.

	So if the homology class $[[\gamma]]$ is of infinite order in $H^1(M,\Z)$, then $\widehat{N}_{H, \gamma} = N^{\sharp}_{H, \gamma}$ is an embedded Lie subgroup of $\widehat{N}_{G, \gamma} = N^{\sharp}_{G, \gamma}$. If $[[\gamma]]$ is of finite order, then $\widehat{N}_{H,\gamma} \subseteq \widehat{N}_{G, \gamma}$ is an embedded submanifold because $\widehat{N}_{G,\gamma}$ is a discrete cover of $N_{G,\gamma}$.
	It follows that the fibre products $\widehat{N}_{H} \subseteq \widehat{N}_{G}$ from \eqref{eq:defNhat} are embedded subgroups, which implies the desired result because these are open in $\widehat{H}_{\cG}$ and $\widehat{G}_{\cG}$, respectively.
\end{proof}

\section{Exact volume-preserving diffeomorphisms of a 3-manifold}\label{sec:volumePreserving}

Let $M$ be a compact, connected, orientable manifold of dimension 3, with volume form $\mu \in \Omega^3(M)$. 
A vector field $X$ on $M$ is divergence free (with respect to $\mu$) if $L_{X}\mu = 0$, which is the case if and only if $i_{X}\mu$ is closed.
Denote the space of divergence free vector fields by $\X(M,\mu)$.
A divergence free vector field $X$ is called \emph{exact} divergence free if $i_{X}\mu$ is exact, and we denote the Lie algebra of exact divergence free vector fields 
by $\X_{\ex}(M,\mu)$.
Since $\X_{\ex}(M,\mu)$ is the kernel of the flux homomorphism 
\begin{equation}\label{eq:FluxHomomorphism}
	\flux_{\mu} \colon \X(M,\mu) \rightarrow H^{2}_{\mathrm{dR}}(M), \quad X \mapsto [i_{X}\mu],
\end{equation}
(cf.\ e.g.~\cite[\S3.2]{DJNV21}), it is an ideal in $\X(M,\mu)$ of finite codimension. 

The Lie algebra extension $\widehat{\fg}_{\mu}$ (see Equation \ref{eq:LAcentralextension}), for $\fg = \X_{\ex}(M,\mu)$ is simply 
$\ol{\Omega}{}^{1}(M) = \Omega^{1}(M)/d\Omega^{0}(M)$.
Indeed, since $\mu$ is nondegenerate, an exact volume-preserving vector field with $i_{X}\mu = d\psi_X$ 
is uniquely determined by the potential $\psi_{X}$, and $X$
determines $\psi_{X}$ up to closed forms.
The Lie bracket on $\ol{\Omega}{}^1(M)$ is given by
\begin{equation}
	[\ol{\psi}{}_{X}, \ol{\psi}{}_{Y}] = \ol{i_{X}i_{Y}\mu},
\end{equation}
and the projection $\pi \colon \ol{\Omega}{}^{1}(M) \rightarrow \X_{\ex}(M,\mu)$ satisfies $\pi(\ol{\psi}{}_{X}) = X$ if and only if 
$i_{X}\mu = d\psi_{X}$.

It is conjectured  \cite{Ro95} that
the central Lie algebra extension 
\begin{equation}\label{eq:LAexactextension}
H^{1}_{\dR}(M) \rightarrow \ol{\Omega}{}^{1}(M) \rightarrow \X_{\ex}(M,\mu)
\end{equation} 
is \emph{universal}, and one of us is currently working on a rigorous proof 
\cite{LeoCornelia}.
As a first application of our results, we construct a central Lie group extension that integrates \eqref{eq:LAexactextension}.

\subsection{Differential characters}\label{sec:DifferentialChars}

Since $M$ is compact, we may rescale the volume form $\mu$ to be integral, $\int_{M}\mu \in \N$.
Recall that isomorphism classes of $\U(1)$-bundle gerbes with connection are classified by differential characters $h \in \widehat{H}^2(M, \U(1))$, and that the differential cohomology group $\widehat{H}^2(M, \U(1))$ sits at the centre of the following commutative diagram with exact rows and columns \cite[p.~20]{BB14}:
	\begin{equation}\label{diag:niceGrid}
		\begin{tikzcd}[column sep=small]
			 &0 \ar[d] &0 \ar[d] &0 \ar[d] \\
			0 \ar[r]& J^2(M) \ar[r] \ar[d]& \Omega^2(M)/\Omega^2(M)_{\Z} \ar[r, "d"] \ar[d]&d\Omega^2(M) \ar[d] \ar[r]& 0\\
			0 \ar[r]&H^2(M,\U(1)) \ar[r] \ar[d]& \widehat{H}^2(M, \U(1)) \ar[r, "\mathrm{curv}"] \ar[d, "\mathrm{Ch}"]& \Omega^3(M)_{\Z} \ar[d]\ar[r]& 0\\
			0 \ar[r]& \mathrm{Ext}(H_2(M,\Z),\Z) \ar[r] \ar[d]&H^3(M,\Z) \ar[r] \ar[d]&\Hom(H_3(M,\Z),\Z)\ar[d]\ar[r] & 0\\
			&0 & 0 & 0. &
		\end{tikzcd}
	\end{equation}
In the middle row of this diagram, 
the curvature $\curv(h)$ of the differential character $h$ corresponds to the curvature of the corresponding bundle gerbe with connection.
Since $\mu$ is an integral $3$-form, 
there exist bundle gerbes $\cG$ with curvature $\mu$, and the set of bundle gerbes with this property 
is a torsor over $H^2(M,\U(1)) = \Hom(H_2(M,\Z), \U(1))$.

\subsection{An abelian extension of the group of symmetries of a differential character}
The group $\Diff(M,\mu)$ of volume-preserving diffeomorphisms is a Fr\'echet--Lie group \cite[Theorem 2.5.3]{Hamilton82}, 
and its Lie algebra is the Lie algebra $\X(M,\mu)$ of divergence-free vector fields.
Let $h \in \widehat{H}^{2}(M,\U(1))$ be a differential character with $\curv(h) = \mu$.
We then consider the map 
\begin{equation*}
	\Diff(M,\mu) \rightarrow \widehat{H}^{2}(M,\U(1)), \phi \mapsto (\phi^{-1})^*h - h.
\end{equation*}
Since $\Diff(M,\mu)$ is orientation preserving, it acts trivially on $H^3(M,\Z)$, so that the image of the map $\phi \mapsto (\phi^{-1})^{*}h - h$ lies in the kernel of $\mathrm{Ch}$.
It then follows from Diagram~\eqref{diag:niceGrid} that $(\phi^{-1})^*h - h \in J^{2}(M)$.
We thus define the Flux cocycle to be the map
\[
\Flux_{h} \colon \Diff(M,\mu) \rightarrow J^2(M)\,,\quad 
\Flux_{h}(\phi) := (\phi^{-1})^*h - h
\]
that takes values in the Jacobian torus $J^2(M) = H^2(M,\R)/H^2(M,\Z)_{\R}$, cf.~\cite[\S3.1]{DJNV21}.
The kernel of the flux cocycle is the group $\Diff(M,h)$ of all diffeomorphisms that preserve the differential 
character $h$, and hence the isomorphism class $[\cG]$ of the bundle gerbe.
Since this is a Fr\'echet--Lie group by \cite[Proposition 3.8]{DJNV21}, we conclude from Theorem~\ref{thm:A} that the group 
\[
	\widehat{\Diff}(M,h)_{\cG} = \{(\phi, A) \,; \phi \in \Diff(M,h), A \in \Mor h_1(\phi^*\cG, \cG)\}
\]
is an abelian extension of Fr\'echet--Lie groups, and the following diagram is exact:
	\begin{equation}\label{diag:firstExtension}
		\begin{tikzcd}[row sep = scriptsize]
			 &1 \ar[d]\\
			&H^1(M,\U(1))\ar[d]\\
			&\widehat{\Diff}(M,h)_{\cG}\ar[d]\\
			1 \ar[r]&\Diff(M,h) \ar[d]\ar[r] &\Diff(M,\mu) \ar[r, "\Flux_{h}"]& J^2(M) \ar[r]& 1.\\
			&1
		\end{tikzcd}
	\end{equation}
\subsection{Central extension of the exact volume-preserving diffeomorphisms}	
	
The flux cocycle $\Flux_{h}$ on $\Diff(M,\mu)$ is in general \emph{not} a group homomorphism. 
However, since the connected component
$\Diff(M,\mu)_0$ acts trivially on $H^2(M,\U(1))$, $\Flux_{h}$ restricts to a group homomorphism  
\[\Flux_{h} \colon \Diff(M,\mu)_{0} \rightarrow J^2(M)\]
on the connected component of unity.
The flux homomorphism $\Flux_{h}$ on $\Diff(M,\mu)_{0}$ is independent of the choice of differential character, 
and depends only on the integral volume form $\mu$, so that we may write $\Flux_{\mu} := \Flux_{h}$, cf.~\cite[\S3.1]{DJNV21}, see also \cite{Calabi1970,Banyaga1978,NV03}. 
Its kernel is $\Diff_{\ex}(M,\mu)$, the Fr\'echet--Lie group of \emph{exact} volume-preserving diffeomorphisms.
In the resulting exact sequence of Fr\'echet--Lie groups
	\begin{equation}\label{eq:exactnicediagram}
		\begin{tikzcd}[row sep = scriptsize]
			 &1 \ar[d]\\
			&H^1(M,\U(1))\ar[d]\\
			&\widehat{\Diff}_{\ex}(M,\mu)_{\cG}\ar[d]\\
			1 \ar[r]&\Diff_{\ex}(M,\mu) \ar[d]\ar[r] &\Diff(M,\mu)_0 \ar[r, "\Flux_{\mu}"]& J^2(M) \ar[r]& 1,\\
			&1
		\end{tikzcd}
	\end{equation}
all arrows are homomorphisms of Fr\'echet--Lie groups.
The vertical column is a \emph{central} extension because the connected Lie group $\Diff_{\ex}(M,\mu)$ acts trivially on $\pi_0(LM)$.

Since $\Diff_{\ex}(M,\mu)$ is open in $\Diff(M,h)$, the two diagrams \eqref{diag:firstExtension} and \eqref{eq:exactnicediagram}
give rise to the same exact diagram of Fr\'echet--Lie algebras:
	\begin{equation}\label{eq:nicediagram}
		\begin{tikzcd}[row sep = scriptsize]
			 &0 \ar[d]\\
			&H^1_{\dR}(M)\ar[d]\\
			&\ol{\Omega}{}^{1}(M)\ar[d]\\
			0 \ar[r]&\X_{\ex}(M,\mu) \ar[d]\ar[r] &\X(M,\mu) \ar[r, "\flux_{\mu}"]& H^2_{\dR}(M) \ar[r]& 0.\\
			&0
		\end{tikzcd}
	\end{equation}
In particular, we have obtained a central extension 
\begin{equation}\label{eq:CentExtDiffEx}
1\rightarrow H^1(M,\U(1)) \rightarrow \widehat{\Diff}_{\ex}(M,\mu)_{\cG} \rightarrow \Diff_{\ex}(M,\mu) \rightarrow 1
\end{equation}
of Fr\'echet--Lie groups that integrates the central extension 
\begin{equation}\label{eq:CentExtExDiv}
0 \rightarrow H^1_{\dR}(M) \rightarrow \ol{\Omega}{}^{1}(M) \rightarrow \X_{\ex}(M,\mu) \rightarrow 0
\end{equation}
of Fr\'echet--Lie algebras.

\label{Rk:universal3d}
In \cite{Ro95}, it is conjectured that this central extension
is universal, and we expect to prove this in the work in progress \cite{LeoCornelia}. 
If this is indeed the case, then the Recognition Theorem in \cite{Neeb2002UCE} (cf.\ also \cite{Neeb2002CE, Ne04}) provides
conditions on the first and second homotopy group of $\Diff_{\ex}(M,\mu)$ under which (a discrete cover of)
$\widehat{\Diff}_{\ex}(M,h)_{\cG}$ is the \emph{universal} central extension of the exact volume-preserving diffeomorphism group.
It may be possible to check these conditions for Riemannian 3-manifolds that satisfy the generalized Smale conjecture,
but we will return to this matter in later work.
In the following, we will instead pursue a similar strategy for the Lie group of exact volume-preserving diffeomorphisms of a 2-manifold $\Sigma$.
Since a volume form $\sigma$ on $\Sigma$ is the same as a symplectic form, $\Diff_{\ex}(\Sigma, \sigma) = \Ham(\Sigma)$ is the group of 
hamiltonian diffeomorphisms. In this setting a universal Lie algebra extension is available \cite{JV15}, and 
the topology is well understood \cite{Smale1959, EarleEells1969, Gramain1973}.  
In Section~\ref{sec:quant} we first construct a central extension of the quantomorphism group which is weakly universal at the Lie algebra level, 
and then use this to construct a universal central extension of $\Ham(\Sigma)$ in Section~\ref{sec:UniversalHamilton}.
	
\section{Central extension of the quantomorphism group of a surface}\label{sec:quant}
Let $\Sigma$ be a 2-dimensional, compact, connected surface with integral symplectic form $\sigma \in \Omega^2(\Sigma)$. 
Then $(\Sigma,\sigma)$ is prequantizable. Let $\pi \colon P \rightarrow \Sigma$ be a principal $\U(1)$-bundle over $\Sigma$, and let
$\theta \in \Omega^1(P, \R)$ be a connection of curvature $\sigma$. 
The group $\Aut(P,\theta)$ of connection-preserving automorphisms of $P \rightarrow \Sigma$ is called
the \emph{quantomorphism group}. It is a Fr\'echet--Lie group by~\cite[VIII.4]{Om74}, \cite{RatiuSchmid1981}. 
The image of the connected identity component $\Aut_{0}(P,\theta)$ in $\Diff(\Sigma,\sigma)$ is the \emph{Hamiltonian diffeomorphism group}
$\mathrm{Ham}(\Sigma)$, yielding a central extension
\begin{equation}\label{eq:KKS2dGp}
1 \rightarrow \U(1) \rightarrow \Aut_{0}(P,\theta) \rightarrow \mathrm{Ham}(\Sigma) \rightarrow 1
\end{equation}
of Fr\'echet--Lie groups. 
Using Theorem~\ref{Thm:centralcase} and ideas and results from \cite{JV15, JV18}, we construct a central Lie group extension 
\begin{equation}
	H^1(P,\U(1)) \rightarrow \widehat{\Aut}_{0}(P,\theta)_{\cG} \rightarrow \Aut_{0}(P,\theta)
\end{equation}
of the quantomorphism group, where $\mc{G}$ is a bundle gerbe with curvature $\theta \wedge d \theta/ (2\pi)$.
We prove that the corresponding Lie algebra extension is \emph{versal}.
This means that it covers every other continuous, linearly split central extension of Lie algebras, but the covering map is not
necessarily unique.
In the context of the quantomorphism group, this is the best one could hope for.
\emph{Universal} extensions (where the covering map is unique) exist if and only if the Lie algebra is perfect, and this is not the case for the Lie algebra of $\Aut_{0}(P,\theta)$.

\subsection{Extensions of Lie algebras}

Recall that a vector field $X \in \X(\Sigma)$ is hamiltonian if 
there exists a function $f_{X} \in C^{\infty}(\Sigma)$ such that
$i_{X}\sigma = -df_{X}$. 
We denote the Fr\'echet--Lie algebra of Hamiltonian vector fields by $\X_{\ham}(\Sigma)$.
Since $\sigma$ is nondegenerate, the function $f_X$ determines $X$
uniquely, and, conversely, $f_{X}$ is determined by $X$ up to an additive constant.
This yields the Kostant-Kirillov-Souriau (KKS) extension 
\begin{equation}\label{eq:KKS2dLA}
	0 \rightarrow \R \rightarrow C^{\infty}(\Sigma) \rightarrow \X_{\ham}(\Sigma) \rightarrow 0
\end{equation}
of the Lie algebra $\X_{\ham}(\Sigma)$ of hamiltonian vector fields, where $C^{\infty}(\Sigma)$ is endowed with 
the Poisson Lie bracket
\begin{equation}
	\{f_{X}, f_{Y}\} = i_{Y}i_{X}\sigma.
\end{equation}
Note that since $\Sigma$ is compact, \eqref{eq:KKS2dLA} is split by the Lie algebra homomorphism 
\[
	s \colon C^{\infty}(\Sigma) \rightarrow \R, \quad f \mapsto \int_{\Sigma} f\sigma.
\]
(This is a Lie algebra homomorphism because $\{f,g\}\sigma = d(fdg)$ is exact.) 
We can thus identify $\X_{\ham}(\Sigma)$ with the Lie algebra $C^{\infty}_{0}(\Sigma)$
of functions that integrate to zero against the Liouville measure.
 
The KKS extension \eqref{eq:KKS2dLA} is the exact sequence of Lie algebras corresponding to $\eqref{eq:KKS2dGp}$.
Indeed, given a function $f \in C^{\infty}(\Sigma)$, there exists a unique vector field $\zeta_{f} \in \lie{X}(P)$ determined by the equations $i_{\zeta_{f}}\theta = \pi^*f$ and $i_{\zeta_{f}} \pi^*\sigma = -\pi^*df$.
If these equations are satisfied, then $L_{\zeta_{f}}\theta = 0$ by Cartan's formula, so that $\zeta_{f}$ lies in the Lie algebra of $\Aut_{0}(P,\theta)$.
Conversely, an infinitesimal automorphism of $(P,\theta)$ determines a Hamilton function by the same formulas.
In the following, we will frequently identify $f \in C^{\infty}(\Sigma)$ with $\pi^*f \in C^{\infty}(P)$ if no confusion can arise.

The 3-dimensional total space $P$ of the prequantum bundle carries the nondegenerate volume form $\theta \wedge d\theta$, 
and its normalization
\[
	\mu = \frac{1}{2\pi}\theta \wedge d\theta
\] 
has integral periods.
The flux homomorphism $\flux_{\mu} \colon \X_{\ex}(P,\mu) \rightarrow H^2_{\dR}(P)$ from Equation~\eqref{eq:FluxHomomorphism} vanishes on infinitesimal automorphisms of $(P,\theta)$.
Indeed, 
\be\label{eq:InfQuantomorphismIsExact}
	\flux_{\mu}(\zeta_{f}) = [i_{\zeta_{f}}\mu] = \frac{1}{2\pi} [(fd\theta - df \wedge \theta)] = \frac{1}{\pi}[fd\theta]
\ee
manifestly vanishes for $f = 1$, and it vanishes on the subalgebra $C_{0}^{\infty}(\Sigma)$ of functions that 
integrate to zero because for any $f_0 \in C^{\infty}_{0}(\Sigma)$, $fd\theta = \pi^*(f\sigma)$ is the pullback of an exact 2-form (cf.~\cite[Proposition 4.2]{JV18}).

The Lie algebra extension $\widehat{\fg}_{\mu}$ of $\fg = C^{\infty}(\Sigma)$ by $H^1_{\dR}(P)$ defined in \eqref{eq:AlgExtension2} and \eqref{eq:LAcentralextension} is therefore
\[
	\widehat{C^{\infty}}(\Sigma)_{\mu}  = \Big\{\ol{\psi}_{f} \in \ol{\Omega}{}^1(P)\,;\, d\psi_{f} = \frac{1}{2\pi}(fd\theta - df\wedge \theta) \text{ for some } f\in C^{\infty}(\Sigma)\Big\},
\]
with projection $\widehat{C^{\infty}}(\Sigma)_{\mu} \rightarrow C^{\infty}(\Sigma)$ given by $\ol{\psi}_{f} \mapsto f$, and Lie bracket 
\[[\ol{\psi}_{f}, \ol{\psi}_{g}] = \ol{i_{\zeta_{f}} i_{\zeta_{g}} \mu} = \frac{1}{2\pi}\left(\ol{fdg - gdf} -\ol{\{f,g\} \theta}\right). \]
Using $\{f,g\}\sigma = df\wedge dg$, one readily checks that $d\ol{\psi}_{\{f,g\}} = d[\ol{\psi}_{f},\ol{\psi}_{g}]$, so 
$\ol{\psi_{f}} \mapsto f$ is indeed a Lie algebra homomorphism with kernel $H^1_{\dR}(P)$.
This gives rise to the following exact diagram of Fr\'echet--Lie algebras:
	\begin{equation}\label{eq:anothernicediagram}
		\begin{tikzcd}[row sep = scriptsize]
			 & &0 \ar[d]\\
			& &H^1_{\dR}(P)\ar[d]\\
			& &\widehat{C^{\infty}}(\Sigma)_{\mu}\ar[d]\\
			0 \ar[r]&\R \ar[r]&C^{\infty}(\Sigma) \ar[d]\ar[r] &\X_{\ham}(\Sigma) \ar[r]& 0.\\
			& &0
		\end{tikzcd}
	\end{equation}
\subsection{Extensions of Lie groups}
Let $h \in \widehat{H}^2(P,\U(1))$ be a differential character with curvature $\curv(h) = \mu$, and let $\cG$ be a bundle gerbe on $P$ realizing $h$.
Then the Lie group $\Aut_{0}(P,\theta)$ preserves the isomorphism class of~$\cG$.
Indeed, 
since $[i_{\zeta_{f}}\mu] = 0$ by the discussion following Equation~\eqref{eq:InfQuantomorphismIsExact},
the infinitesimal flux homomorphism 
$\flux_{\mu} \colon \X(P,\mu) \rightarrow H^2_{\dR}(P)$ vanishes on $C^{\infty}(\Sigma)$.
Since $\Aut_{0}(P,\theta)$ is a connected Lie subgroup of $\Diff_{0}(P,\mu)$, it is in the kernel of the flux homomorphism $\Flux_{h} \colon \Diff_{0}(P,\mu) \rightarrow J^2(P)$, so $\phi^*h=h$ for all $\phi \in \Aut_{0}(P,\theta)$.
Corresponding to the exact diagram~\eqref{eq:anothernicediagram}, we thus obtain 
an exact diagram 
of Fr\'echet--Lie groups
	\begin{equation}\label{eq:anothernicediagramGroups}
		\begin{tikzcd}[row sep = scriptsize]
			 & &0 \ar[d]\\
			& &H^1(P,\U(1))\ar[d]\\
			& &\widehat{\Aut}_{0}(P,\theta)_{\cG}\ar[d]\\
			0 \ar[r]&\U(1) \ar[r]&\Aut_{0}(P,\theta) \ar[d]\ar[r] &\mathrm{Ham}(\Sigma) \ar[r]& 0,\\
			& &0
		\end{tikzcd}
	\end{equation}
where the vertical column is a central extension of Fr\'echet--Lie groups by Theorem~\ref{Thm:centralcase}. 

\subsection{Versal and universal Lie algebra extensions}

Since $\X_{\ham}(\Sigma) \simeq C^{\infty}_{0}(\Sigma)$ is perfect \cite{ALDM74}, it has a universal central extension. 
It is given \cite[Theorem 5.6]{JV15} by 
\begin{equation}\label{eq:UnivCE}
H^1_{\dR}(\Sigma) \rightarrow \ol{\Omega}{}^{1}(\Sigma) \rightarrow C^{\infty}_{0}(\Sigma),
\end{equation}
where $\ol{\gamma}_{f} \in \ol{\Omega}{}^{1}(\Sigma)$ projects to the unique $f\in C^{\infty}_{0}(\Sigma)$ such that $d\ol{\gamma}_{f} = f\sigma$, and the Lie bracket on $\ol{\Omega}{}^{1}(\Sigma)$ is given by \cite[Rk.~5.3]{JV15}
\begin{equation*}
	[\ol{\gamma}_{f}, \ol{\gamma}_{g}] = {\textstyle\frac{1}{2}}\ol{fdg - gdf}.
\end{equation*}
We define an injective Lie algebra homomorphism 
\[
J \colon \ol{\Omega}{}^{1}(\Sigma) \rightarrow \widehat{C}^{\infty}(\Sigma)_{\mu}, \quad
	J(\ol{\gamma}_{f}) = \textstyle \frac{1}{\pi}\Big(\pi^*\ol{\gamma}_{f} - \textstyle{\frac{1}{2}}\ol{f\theta}\Big).
\]
Indeed, note that $J$ covers the projections to $C^{\infty}(\Sigma)$
because
\[
d J(\ol{\gamma}_{f}) = \textstyle \frac{1}{\pi}\Big(\pi^* d\gamma_{f} - \textstyle{\frac{1}{2}}df \wedge \theta - \textstyle{\frac{1}{2}} f \wedge d\theta\Big)
= \frac{1}{2\pi} \big(fd\theta - df \wedge \theta\big).
\]
To see that $J$ is a Lie algebra homomorphism, one then uses that the Lie bracket on both sides factors through the projection to the Poisson algebra,
\[
[J(\ol{\gamma}_{f}), J(\ol{\gamma}_{g})] = \textstyle \frac{1}{2\pi}\big( \ol{fdg - gdf} - \ol{\{f,g\}\theta}\big) = J([\ol{\gamma}_{f}, \ol{\gamma}_{g}]).
\]
On the centre $H^1(\Sigma)$, the map $J$ coincides with the pullback 
$\pi^* \colon H^1_{\dR}(\Sigma) \rightarrow H^1_{\dR}(P)$, which is an isomorphism by the Gysin sequence.

The exact diagram \eqref{eq:anothernicediagram} can therefore be extended as follows:
	\begin{equation}\label{eq:Owateenmooidiagram}
		\begin{tikzcd}[row sep = scriptsize, column sep = large]
			 & &0 \ar[d] & 0\ar[d]\\
			&0\ar[d] &\ar[l]H^1_{\dR}(P)\ar[d]& H^1_{\dR}(\Sigma)\ar["\pi^*"', "\sim", l]\ar[d] & 0 \ar[l]\\
			0 &\ar[l]\frac{1}{2\pi} \R \ol{\theta} \ar[d]&\widehat{C^{\infty}}(\Sigma)_{\mu}\ar[d]\ar[l, "Q"']& \ol{\Omega}{}^{1}(\Sigma) \ar[l, "J"']\ar[d] & 0 \ar[l]\\
			0 &\ar[l]\R \ar[d] &C^{\infty}(\Sigma) \ar[d]\ar[l, "q"'] &C^{\infty}_{0}(\Sigma) \ar[l, "\iota"']\ar[d]& \ar[l]0.\\
			& 0 &0 & 0
		\end{tikzcd}
	\end{equation}
The top and the bottom row are split exact sequences of Fr\'echet--Lie algebras,
and we have identified $\X_{\ham}(\Sigma)$ with $C^{\infty}_{0}(\Sigma)$.
Note that $\frac{1}{2\pi} \R\ol{\theta}$ is a 1-dimensional central subalgebra of $\widehat{C^{\infty}}(\Sigma)_{\mu}$
which is not contained in the image of $J$. Since $J$ restricts to an isomorphism $H^1_{\dR}(\Sigma) \rightarrow H^1_{\dR}(P)$ 
and induces the codimension 1 inclusion $\iota \colon C^{\infty}_{0}(\Sigma) \hookrightarrow C^{\infty}(\Sigma)$, 
we have an isomorphism 
\begin{equation}\label{eq:Q}
\widehat{C^{\infty}}(\Sigma)_{\mu}/ J(\ol{\Omega}^1(\Sigma))\simeq \textstyle\frac{1}{2\pi} \R\ol{\theta}.
\end{equation}
Explicitly, the quotient map $Q \colon \widehat{C^{\infty}}(\Sigma)_{\mu} \rightarrow \frac{1}{2\pi} \R\ol{\theta}$ is realized by
\begin{equation}\label{eq:Qint}
Q(\ol{\psi}_{f}) = \frac{1}{\mathrm{vol}(\Sigma)}\Big(\int_{P}\psi_{f}\wedge d\theta \Big)\ol{\theta},
\end{equation}
and it covers the quotient map
\[
	q \colon C^{\infty}(\Sigma) \rightarrow \R, \quad q(f) = \frac{1}{\mathrm{vol}(\Sigma)}\int_{\Sigma} f\sigma.
\]
In particular, \emph{all} three rows are split exact.

A central extension of Fr\'echet--Lie algebras is called \emph{universal} if 
it is continuous and linearly split, and if 
every other continuous, linearly split central extension is covered by it in a unique fashion.
It is called \emph{versal} if the factorization exists, but is not necessarily unique.

Since the Poisson algebra $C^{\infty}(\Sigma)$ is not perfect, it does not have a universal extension. 
However, it is not hard to see from the above discussion that the Lie algebra extension that comes from 
$\widehat{\Aut}_{0}(P,\theta) \rightarrow \Aut_{0}(P,\theta)$ is versal.

\begin{proposition}
The central Lie algebra extension  
\begin{equation}\label{eq:versal}
H^1_{\dR}(P) \rightarrow \widehat{C^{\infty}}(\Sigma)_{\mu} \rightarrow C^{\infty}(\Sigma)
\end{equation}
corresponding to the central Lie group extension 
\begin{equation}
	H^1(P,\U(1)) \rightarrow \widehat{\Aut}_{0}(P,\theta)_{\cG} \rightarrow \Aut_{0}(P,\theta) 
\end{equation}
is versal.
Every continuous, linearly split central extension $\fz \rightarrow \fg^{\sharp} \rightarrow C^{\infty}(\Sigma)$
is covered by \eqref{eq:versal}, and the space of coverings is linearly isomorphic to $\fz$.
\end{proposition}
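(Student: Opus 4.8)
The plan is to reduce everything to the universal central extension of the perfect ideal $C^{\infty}_{0}(\Sigma) = [C^{\infty}(\Sigma), C^{\infty}(\Sigma)]$, and then to treat the single remaining abelian direction by hand. First I would record two structural facts about the middle column $\widehat{C^{\infty}}(\Sigma)_{\mu}$. Since $\{f,g\}\sigma = df\wedge dg$ is exact, every Poisson bracket integrates to zero, so $[C^{\infty}(\Sigma), C^{\infty}(\Sigma)] \subseteq C^{\infty}_{0}(\Sigma)$; perfectness of $\X_{\ham}(\Sigma)\simeq C^{\infty}_{0}(\Sigma)$ \cite{ALDM74} gives equality, whence $C^{\infty}(\Sigma)^{\mathrm{ab}}\cong \R$. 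Because the universal central extension $\ol{\Omega}{}^{1}(\Sigma)$ of a perfect Lie algebra is again perfect, and $\tfrac{1}{2\pi}\R\ol{\theta}$ is central, the commutator subalgebra of $\widehat{C^{\infty}}(\Sigma)_{\mu}$ equals $J(\ol{\Omega}{}^{1}(\Sigma))$; combined with \eqref{eq:Q} this shows $\widehat{C^{\infty}}(\Sigma)_{\mu}^{\mathrm{ab}}\cong \tfrac{1}{2\pi}\R\ol{\theta}\cong\R$.

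To prove versality, let $\fz \to \fg^{\sharp} \xrightarrow{p} C^{\infty}(\Sigma)$ be a continuous, linearly split central extension. Restricting a continuous linear section to $C^{\infty}_{0}(\Sigma)$ exhibits $\fg^{\sharp}_{0} := p^{-1}(C^{\infty}_{0}(\Sigma))$ as a continuous, linearly split central extension of the perfect algebra $C^{\infty}_{0}(\Sigma)$ by $\fz$. By the universality of \eqref{eq:UnivCE} there is a unique morphism of central extensions $\psi \colon \ol{\Omega}{}^{1}(\Sigma) \to \fg^{\sharp}_{0}$ over $C^{\infty}_{0}(\Sigma)$. Using the vector-space splitting $\widehat{C^{\infty}}(\Sigma)_{\mu} = J(\ol{\Omega}{}^{1}(\Sigma)) \oplus \tfrac{1}{2\pi}\R\ol{\theta}$ coming from the split middle row of \eqref{eq:Owateenmooidiagram}, I would then set $\Phi \circ J := \psi$ and $\Phi(\tfrac{1}{2\pi}\ol{\theta}) := \xi$ for an arbitrary lift $\xi \in \fg^{\sharp}$ of the constant function $1 \in C^{\infty}(\Sigma)$ (note $\tfrac{1}{2\pi}\ol{\theta} = \ol{\psi}_{1}$ projects to $1$), and extend linearly.

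The crux is checking that $\Phi$ is a Lie algebra homomorphism, and the key point is the bracket between the two summands. Since $\ol{\theta}$ is central in $\widehat{C^{\infty}}(\Sigma)_{\mu}$, I must show $[\xi, \psi(x)] = 0$ for all $x$. As $p(\xi) = 1$ is Poisson-central, $\mathrm{ad}(\xi)$ maps $\fg^{\sharp}_{0}$ into $\fz = \ker p$ and kills $\fz$, hence descends to a linear map $\beta \colon C^{\infty}_{0}(\Sigma) \to \fz$; the Jacobi identity together with centrality of $\fz$ forces $\beta$ to vanish on every bracket, so $\beta = 0$ by perfectness of $C^{\infty}_{0}(\Sigma)$. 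This yields $[\xi,\fg^{\sharp}_{0}] = 0$, hence the desired vanishing, and the remaining bracket relations hold because $\psi$ is a homomorphism. A separate check on each summand that $p\circ\Phi$ equals the projection $\widehat{C^{\infty}}(\Sigma)_{\mu} \to C^{\infty}(\Sigma)$ then exhibits $\Phi$ as a covering.

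Finally, to identify the space of coverings with $\fz$, I would observe that the difference $D = \Phi - \Phi'$ of two coverings takes values in $\fz$ and, by centrality of $\fz$, satisfies $D([u,v]) = 0$, so $D$ factors through $\widehat{C^{\infty}}(\Sigma)_{\mu}^{\mathrm{ab}} \cong \R$; conversely any element of $\Hom(\widehat{C^{\infty}}(\Sigma)_{\mu}^{\mathrm{ab}}, \fz) = \Hom(\R, \fz)$ produces a new covering. Since the covering is pinned down on $J(\ol{\Omega}{}^{1}(\Sigma)) = [\widehat{C^{\infty}}(\Sigma)_{\mu}, \widehat{C^{\infty}}(\Sigma)_{\mu}]$ by uniqueness in the universal property, the only freedom is the lift $\xi$, which may be shifted by any element of $\fz$. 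Hence the coverings form an affine space modelled linearly on $\Hom(\R, \fz) \cong \fz$, and taking the constructed $\Phi$ as base point identifies it with $\fz$. I expect the main obstacle to be precisely the homomorphism check across the abelian direction, that is the vanishing $[\xi, \fg^{\sharp}_{0}] = 0$; once this Jacobi-plus-perfectness argument is secured, the rest is bookkeeping with the split rows of \eqref{eq:Owateenmooidiagram}.
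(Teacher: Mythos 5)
Your proposal is correct and follows essentially the same route as the paper: reduce to the universal central extension $H^1_{\dR}(\Sigma)\rightarrow \ol{\Omega}{}^{1}(\Sigma)\rightarrow C^{\infty}_{0}(\Sigma)$ of the perfect ideal, show that any lift of the constant function $1$ is central (your Jacobi/adjoint-action argument is the same computation as the paper's cocycle identity $\psi(1,\{f,g\})=0$), and complete the covering across the one-dimensional complement $\frac{1}{2\pi}\R\ol{\theta}$ via the continuous splitting, with exactly a $\fz$-worth of freedom in the choice of lift. Your additional bookkeeping identifying $J(\ol{\Omega}{}^{1}(\Sigma))$ with the commutator subalgebra merely makes explicit the uniqueness discussion that the paper's terser proof leaves implicit.
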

\begin{proof}
Choose a continuous linear splitting $s \colon C^{\infty}(\Sigma) \rightarrow \fg^{\sharp}$, and construct the corresponding cocycle
\[\psi \colon C^{\infty}(\Sigma) \times C^{\infty}(\Sigma) \rightarrow \fz, \quad \psi(f,g) = [s(f),s(g)] - s(\{f,g\}).\]
Since $C_{0}(\Sigma)$ is perfect, the cocycle $\psi$ vanishes on $\R \times C^{\infty}_{0}(\Sigma)$,
\[
\psi(1, \{f,g\}) = \psi(f, \{1,g\}) + \psi(g, \{f,1\}) = 0.
\]
Since $\R$ is 1-dimensional, it vanishes on $\R \times \R$ as well, so the skew-symmetric cocycle $\psi$ factors through 
$C^{\infty}_0(\Sigma) \times C^{\infty}_0(\Sigma)$. We conclude that $s(1)$ is central in $\fg^{\sharp}$.
Since \[H^1_{\dR}(\Sigma)\rightarrow \ol{\Omega}{}^1(\Sigma) \rightarrow C^{\infty}_0(\Sigma)\] is universal \cite{JV15},
there exists a unique continuous Lie algebra homomorphism $H \colon \ol{\Omega}{}^1(\Sigma) \rightarrow \fg^{\sharp}$ that covers the restriction of 
$\fg^{\sharp}$ to $C^{\infty}_0(\Sigma)$. 
Since the splitting $Q$ from \eqref{eq:Qint} is continuous, we have $\widehat{C^{\infty}}(\Sigma) = \ol{\Omega}{}^1(\Sigma) \oplus \frac{1}{2\pi}\R\ol{\theta}$
as a direct sum of Fr\'echet--Lie algebras.
Since $s(1)$ is central, we can complete $H$ to a covering 
$\widehat{C^{\infty}}(\Sigma) \rightarrow \fg^{\sharp}$ of central extensions 
by setting $H(\frac{1}{2\pi}\ol{\theta}) = s(1) + z$ for an arbitrary element $z\in \fz$.
\end{proof}

\section{The universal central extension of \texorpdfstring{$\mathrm{Ham}(\Sigma)$}{Ham(Sigma)}}\label{sec:UniversalHamilton}

In this section, we start with the same data as in Section \ref{sec:quant}:
our 2-dimensional manifold $\Sigma$ is a compact, connected surface with integral symplectic form $\sigma \in \Omega^{2}(\Sigma)$, the map $\pi: P \rightarrow \Sigma$ is a principal $\U(1)$-bundle over $\Sigma$, and $\theta \in \Omega^{1}(P,\R)$ is a connection of curvature $\sigma$.

In contrast to the Lie algebra $C^{\infty}(\Sigma)$ of the quantomorphism group $\Aut_{0}(P,\theta)$, the Lie algebra 
$\X_{\ham}(\Sigma)$ of $\Ham(\Sigma)$ is perfect, and therefore possesses a universal central extension.
It is the Lie algebra $\ol{\Omega}{}^1(\Sigma)$ from \eqref{eq:UnivCE}, with the Lie bracket $[\ol{\gamma_{f}}, \ol{\gamma_{g}}] = {\textstyle\frac{1}{2}}\ol{fdg - gdf}$.
We construct a universal central extension of the Fr\'echet--Lie group $\Ham(\Sigma)$ whose Lie algebra is $\ol{\Omega}{}^1(\Sigma)$.
In order to do so, we will use the following Recognition Theorem of Neeb, which cleanly separates the topological from the infinitesimal aspects
of the problem.

\begin{theorem}[Recognition Theorem]\label{RecognitionTheorem}
Let $\Ab \rightarrow \widehat{G} \rightarrow G$ be a central extension of Fr\'echet--Lie groups with finite-dimensional centre $\Ab$.
Suppose that: 
\begin{itemize} 
\item[1)] The Lie algebra $\fg$ is perfect, and the derived Lie algebra extension $\fz \rightarrow \widehat{\fg} \rightarrow \fg$ is universal;
\item[2)] $\widehat{G}$ is simply connected;
\item[3)] $\pi_1(G)$ is contained in the commutator group of the simply connected cover.
\end{itemize}
Then $\Ab \rightarrow \widehat{G} \rightarrow G$ is universal for central extensions of $G$ by regular abelian Lie groups modelled on 
complete locally convex spaces.
\end{theorem}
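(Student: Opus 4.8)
The plan is to verify the universal property of $\Ab \to \widehat{G} \to G$ directly. Given an arbitrary central extension $A' \to \widehat{G}' \xrightarrow{q'} G$ by a regular abelian Lie group $A'$ modelled on a complete locally convex space, I would construct a morphism of central extensions $\phi \colon \widehat{G} \to \widehat{G}'$ covering $\mathrm{id}_{G}$ and show that it is unique. The whole strategy is to solve the problem at the level of Lie algebras, where universality is available by hypothesis, and then to integrate, using that $\widehat{G}$ is simply connected.

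First I would differentiate. Writing $\mathfrak{a}' = \mathrm{Lie}(A')$, the extension $q'$ has as its derivative a continuous, linearly split central extension $\mathfrak{a}' \to \widehat{\fg}' \to \fg$ of $\fg$ (the linear splitting coming from a smooth local section of $q'$). By hypothesis (1), $\widehat{\fg}$ is the universal central extension of the perfect Lie algebra $\fg$, so there is a unique morphism $\psi \colon \widehat{\fg} \to \widehat{\fg}'$ covering the identity on $\fg$, i.e.\ with the projection $\widehat{\fg}' \to \fg$ sending $\psi$ to the projection $\widehat{\fg} \to \fg$.

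Next I would integrate $\psi$. Since $\widehat{G}$ is simply connected by hypothesis (2) and $\widehat{G}'$ is regular (being an extension of regular Lie groups), the integration theorem for morphisms yields a unique group homomorphism $\phi \colon \widehat{G} \to \widehat{G}'$ with derivative $\psi$. That $\phi$ covers $\mathrm{id}_{G}$ would follow by comparing $q' \circ \phi$ with $q \colon \widehat{G} \to G$: both are homomorphisms out of the connected group $\widehat{G}$ whose derivatives equal the projection $\widehat{\fg} \to \fg$, so they coincide. For uniqueness I would use that the universal central extension $\widehat{\fg}$ of a perfect Lie algebra is itself perfect: any two morphisms covering $\mathrm{id}_{G}$ differ by a homomorphism $\delta \colon \widehat{G} \to A'$ into the central kernel, and the derivative of $\delta$ is a map into the abelian $\mathfrak{a}'$ that kills $[\widehat{\fg}, \widehat{\fg}] = \widehat{\fg}$, whence $\delta$ is trivial on the connected group $\widehat{G}$.

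The step I expect to be the main obstacle is the role of hypothesis (3). The argument above is formal once one is careful about the kernel of $\widehat{G} \to G$, which need not be connected: since $\widehat{G}$ is simply connected, the homotopy exact sequence of $\ker \to \widehat{G} \to G$ identifies $\pi_{0}(\ker)$ with $\pi_{1}(G)$, so a covering morphism must also realise a definite homomorphism on the $\pi_{1}(G)$-part of the kernel. Pinning this topological contribution down is precisely what the period homomorphism $\mathrm{per} \colon \pi_{2}(G) \to \fz$ and the associated flux control, and the inclusion $\pi_{1}(G) \subseteq [\widetilde{G}, \widetilde{G}]$ into the commutator subgroup of the simply connected cover is what rigidifies the characters of $\pi_{1}(G)$, so that no ambiguity survives and $\phi$ is forced to be the unique morphism of central extensions. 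I would isolate this as a lemma comparing central extensions of $G$ with pairs consisting of a Lie algebra extension together with a character of $\pi_{1}(G)$, and appeal to Neeb's period-map machinery \cite{Neeb2002UCE} to finish; this is where essentially all the analytic and topological subtlety is concentrated, the remaining steps being formal.
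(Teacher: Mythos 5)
The first thing to observe is that the paper does not actually prove this theorem: its entire proof is the single line ``This follows from \cite[Corollary 2.13, Lemma 4.5, Proposition 4.13]{Neeb2002CE}.'' So the only meaningful comparison is between your sketch and the results of Neeb that the paper invokes. Your skeleton --- differentiate the competing extension, obtain the unique Lie algebra morphism $\psi \colon \widehat{\fg} \rightarrow \widehat{\fg}'$ from universality of $\widehat{\fg}$, pass to the group level using simple connectedness of $\widehat{G}$, and derive uniqueness from perfectness of $\widehat{\fg}$ --- is the correct shape of the argument, and your uniqueness step is essentially sound: two morphisms over $\mathrm{id}_{G}$ differ by a smooth homomorphism $\widehat{G} \rightarrow A'$ whose derivative kills the (dense) commutator algebra of $\widehat{\fg}$, hence vanishes, so the homomorphism is trivial on the connected group $\widehat{G}$.

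There are, however, two genuine gaps. First, your existence step is unjustified as written: to integrate $\psi$ you invoke the morphism-integration theorem, which requires the \emph{target} $\widehat{G}'$ to be regular, and you justify this by saying it is ``an extension of regular Lie groups''. That presupposes that $G$ itself is regular, which is not among the hypotheses: $G$ is merely a Fr\'echet--Lie group, and regularity of Fr\'echet--Lie groups is not known in general. The route that actually works --- and which is what the cited results of \cite{Neeb2002CE} supply --- avoids the general integration theorem: pull $\widehat{G}'$ back along $q \colon \widehat{G} \rightarrow G$ to a central extension of the simply connected group $\widehat{G}$ by $A'$; its Lie algebra extension is split by $\psi$, and a central extension of a simply connected Lie group by a \emph{regular} abelian Lie group splits as soon as its Lie algebra extension splits (the splitting is produced by parallel transport in the associated flat bundle, which uses only regularity of the kernel $A'$, which \emph{is} assumed). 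Second, the step that you yourself flag as carrying ``essentially all the analytic and topological subtlety'' --- the role of hypothesis (3) --- is not carried out at all but delegated to ``Neeb's period-map machinery''; moreover, half of your heuristic for it is misplaced: the period homomorphism on $\pi_{2}(G)$ and the flux control the \emph{existence} and integrability of central extensions, which is given here since $\widehat{G}$ is assumed to exist, whereas the residual ambiguity lies in the flat extensions classified by characters of $\pi_{1}(G)$; it is to these that the condition $\pi_{1}(G) \subseteq [\widetilde{G},\widetilde{G}]$ speaks (it forces any character of $\pi_{1}(G)$ that extends to $\widetilde{G}$ to vanish), and this is asserted rather than proved in your proposal. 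Since the paper itself outsources exactly these points to \cite{Neeb2002CE}, your proposal is not wrong in spirit; but as a standalone proof it is incomplete, and it contains one claim (regularity of $\widehat{G}'$) that the stated hypotheses do not support.
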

\begin{proof}
This follows from \cite[Corollary 2.13, Lemma 4.5, Proposition 4.13]{Neeb2002CE}.
\end{proof}

\subsection{A Lie group extension whose Lie algebra extension is universal}

We first construct a central extension of $\Ham(\Sigma)$ whose Lie algebra is the universal Lie algebra 
extension $\ol{\Omega}{}^{1}(\Sigma)$.
By Corollary~\ref{prop:smoothfunctor}, the embedded Lie subgroup $\U(1) \subseteq  \Aut_{0}(P,\theta)$ of vertical 
automorphisms
is covered by the embedded Lie subgroup $\widehat{\U(1)}_{\cG} \subseteq \widehat{\Aut}_{0}(P,\theta)_{\cG}$.
\begin{proposition}
If $P$ is a generator of the Picard group of $\Sigma$, then the central extension 
\[
	H^1(P,\U(1)) \rightarrow \widehat{\U(1)}_{\cG} \rightarrow \U(1)
\]
of the group $\U(1)$ of vertical automorphisms
is split. The group of splittings $s \colon \U(1) \rightarrow \widehat{\U(1)}_{\cG}$ is isomorphic to $\Hom(\U(1), \U(1)^{2g}) \simeq \Z^{2g}$.
\end{proposition}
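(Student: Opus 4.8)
The plan is to identify $\widehat{\U(1)}_{\cG}$ as a torus and then split the resulting extension of tori by an elementary lattice argument. First I would pin down the fibre. By the universal coefficient theorem, and because $\U(1)$ is divisible (so the $\mathrm{Ext}$-term vanishes), $H^1(P,\U(1)) \cong \Hom(H_1(P,\Z),\U(1))$. The hypothesis that $P$ generates the Picard group means that its Euler class is a generator of $H^2(\Sigma,\Z) \cong \Z$, so in the Gysin sequence of the circle bundle $P \to \Sigma$ the cap product $H_2(\Sigma,\Z) \to H_0(\Sigma,\Z)$ with the Euler class is an isomorphism; this forces $\pi_* \colon H_1(P,\Z) \xrightarrow{\sim} H_1(\Sigma,\Z) \cong \Z^{2g}$. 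Hence $H^1(P,\U(1)) \cong \U(1)^{2g}$, and the extension in question is $\U(1)^{2g} \to \widehat{\U(1)}_{\cG} \to \U(1)$. This is the only place the generator hypothesis enters: for Euler number $n$ with $|n|>1$ one would pick up a torsion summand $\Z/n$.

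Next I would show that $\widehat{\U(1)}_{\cG}$ is a torus. As a central extension of the compact connected group $\U(1)$ by the compact connected group $\U(1)^{2g}$, it is the total space of a principal $\U(1)^{2g}$-bundle over $\U(1)$, hence compact and connected. Its Lie-algebra extension is the one described in Theorem~\ref{Thm:centralcase}, with base $\mathrm{Lie}(\U(1)) = \R$; since every alternating bilinear form on a one-dimensional space vanishes, the defining cocycle $[i_{X_M}i_{Y_M}\mu]$ is identically zero (concretely, $X_M$ and $Y_M$ are multiples of the generator $\zeta_1$ of the vertical action, so $i_{X_M}i_{Y_M}\mu \propto i_{\zeta_1}i_{\zeta_1}\mu = 0$). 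Therefore $\widehat{\U(1)}_{\cG}$ is a compact connected abelian Lie group of dimension $2g+1$, i.e.\ a torus $T^{2g+1}$.

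It then remains to split the extension of tori $\U(1)^{2g} \to T^{2g+1} \xrightarrow{p} \U(1)$ and to count the splittings. Passing to fundamental groups yields a short exact sequence $0 \to \Z^{2g} \to \Z^{2g+1} \to \Z \to 0$ of free abelian groups, which splits because $\Z$ is free; I would choose a class $\lambda_0 \in \pi_1(T^{2g+1})$ mapping to a generator of $\pi_1(\U(1))$. The line $\R\lambda_0 \subseteq \mathrm{Lie}(T^{2g+1})$ is spanned by a primitive lattice vector, so it meets the period lattice in $\Z\lambda_0$ and projects isomorphically onto $\mathrm{Lie}(\U(1))$; exponentiating therefore produces a Lie group homomorphism $s \colon \U(1) \to T^{2g+1}$ with $p \circ s = \mathrm{id}$, which proves the extension is split. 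Finally, for the count: given two splittings $s_1,s_2$, the map $z \mapsto s_1(z)s_2(z)^{-1}$ is a homomorphism $\U(1) \to \ker p = \U(1)^{2g}$ (here I use that $T^{2g+1}$ is abelian), and conversely $s\cdot h$ is a splitting for every $h \in \Hom(\U(1),\U(1)^{2g})$. Thus the splittings form a torsor under $\Hom(\U(1),\U(1)^{2g})$, and fixing the splitting $s$ above as base point identifies it with $\Hom(\U(1),\U(1)^{2g}) \cong \Z^{2g}$.

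The main obstacle is the second step: everything hinges on recognising that $\widehat{\U(1)}_{\cG}$ is a genuine torus, which requires both the vanishing of the Lie-algebra cocycle on the one-dimensional base and the compactness of the extension. Once this structural fact is secured, the existence of a splitting and the description of all splittings are formal consequences of the structure theory of tori.
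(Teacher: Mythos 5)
Your proof is correct, and its key step takes a genuinely different route from the paper's. The identification of the kernel is common to both: the Gysin sequence of $P \rightarrow \Sigma$ plus the fact that a Picard generator has Euler number $\pm 1$ gives $H^1(P,\U(1)) \cong \U(1)^{2g}$ (you argue in homology and apply the universal coefficient theorem; the paper dualizes to a cohomology sequence whose final term is the cyclic group $C_n$, with $n=1$). From there the paper simply quotes Shapiro's classification \cite{Shapiro1949} of central extensions of a compact connected Lie group $K$ by a torus: each is induced from a homomorphism of $\pi_1(K)$ into the torus via the universal cover $\widetilde{K}$ (the paper writes $\pi_0(K)$, but $\pi_1(K)$ is clearly intended), it splits exactly when that homomorphism extends to $\widetilde{K}$, and the splittings form a copy of $\Hom(K,\U(1)^{2g})$; for $K = \U(1)$ one has $\widetilde{K} = \R$, every homomorphism $\Z \rightarrow \U(1)^{2g}$ extends, and the proposition follows. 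You instead establish the relevant structure by hand: compactness and connectedness of the total space, together with the vanishing of the Lie algebra cocycle of Theorem~\ref{Thm:centralcase} on the one-dimensional base $\mathrm{Lie}(\U(1))$, show that $\widehat{\U(1)}_{\cG}$ is abelian, hence a torus $T^{2g+1}$; the splitting then comes from lifting a generator of $\pi_1(\U(1))$ to a primitive period vector, and the count from the standard torsor argument. In effect you re-prove the case of Shapiro's theorem that the paper uses. What each buys: your argument is self-contained and pinpoints the infinitesimal reason the extension must split (any skew form on a one-dimensional space vanishes) --- an input the paper's proof never touches; the paper's argument is shorter and more robust, since Shapiro's theorem disposes of \emph{every} central extension of $\U(1)$ by $\U(1)^{2g}$ without requiring any knowledge of the Lie algebra extension.
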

\begin{proof}
The homological Gysin sequence for the circle bundle $P\rightarrow \Sigma$ yields an exact sequence
\[
	H_2(\Sigma, \Z) \stackrel{\frown \chi\;\;}{\longrightarrow} H_0(\Sigma, \Z) \longrightarrow H_1(P,\Z) \stackrel{\pi_*}{\longrightarrow} H_1(\Sigma,\Z) \rightarrow 0,
\]
where $\chi$ is the Euler class of $P \rightarrow \Sigma$. Since $\Sigma$ is closed and two dimensional, we have 
\[H_0(\Sigma,\Z) /(\chi \frown H_2(\Sigma,\Z)) = \Z/n\Z,\] with 
$n$ the Euler class of $P\rightarrow \Sigma$.
This yields an exact sequence 
\[
	H^1(\Sigma, \U(1)) \stackrel{\pi^*}{\longrightarrow} H^1(P,\U(1)) \longrightarrow C_n
\]
in cohomology, where $C_n \subset \U(1)$ is the cyclic group of order $n$.
If $\Sigma$ is a surface of genus $g$, we thus have an exact sequence
\[
	\U(1)^{2g} \rightarrow H^1(P,\U(1)) \rightarrow C_n.
\]
Since $n=1$ for every generator of the Picard group, we find in particular that $H^1(P,\U(1)) \simeq \U(1)^{2g}$.

A result of Shapiro \cite{Shapiro1949} says that for a compact, connected Lie group $K$, every central extension by $\U(1)^{2g}$ comes from a 
homomorphism $\pi_0(K) \rightarrow \U(1)^{2g}$,
\[
	\U(1)^{2g} \rightarrow \widetilde{K}\times_{\pi_0(K)} \U(1)^{2g} \rightarrow K.
\]
The extension admits a splitting if and only if the homomorphism $\pi_0(K) \rightarrow \U(1)^{2g}$ extends to 
the universal cover $\widetilde{K}$, and the group of splittings is 
isomorphic to $\Hom(K, \U(1)^{2g})$. For $K = \U(1)$, this yields the required result.
\end{proof}

So $s(\U(1))$ is an embedded subgroup of $\widehat{\U(1)}_{\cG}$, and therefore of $\widehat{\Aut}_{0}(P,\theta)_{\cG}$
because $\widehat{\U(1)}_{\cG} \subset \widehat{\Aut}_{0}(P,\theta)_{\cG}$ is an embedded subgroup by Corollary~\ref{prop:smoothfunctor}.
 It corresponds to the 1-dimensional abelian Lie algebra spanned by $\frac{1}{2\pi}\ol{\theta} + \pi^*[\alpha]$
for a class $[\alpha] \in H^1_{\dR}(\Sigma, \R)$.
Note that the diagram \eqref{eq:Owateenmooidiagram} remains commutative if we replace $\frac{1}{2\pi}\R\ol{\theta}$ in the left upper corner 
by $(\frac{1}{2\pi}\ol{\theta} + [\alpha])\R$. (The formula for the quotient map changes slightly due to the different choice of complement.)

\begin{proposition}\label{Prop:ConstructieExtensie}
If $P$ is a generator of the Picard group of $\Sigma$, and if $s\colon  \U(1) \rightarrow \widehat{\U(1)}_{\cG}$ 
splits the central extension $\widehat{\U(1)}_{\cG} \rightarrow \U(1)$, then 
the Lie algebra extension corresponding to
\[H^1(\Sigma, \U(1)) \rightarrow\widehat{\Aut}_{0}(P)/s(U(1)) \rightarrow \Ham(\Sigma)\]
 is the universal central extension 
 \[
 	H^1_{\dR}(\Sigma) \rightarrow \ol{\Omega}{}^{1}(\Sigma) \rightarrow \X_{\ham}(\Sigma)
 \]
 of the Lie algebra of hamiltonian vector fields.
\end{proposition}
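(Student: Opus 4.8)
The plan is to read the Lie algebra extension of the quotient off the commutative diagram~\eqref{eq:Owateenmooidiagram} and recognise it as the universal extension~\eqref{eq:UnivCE}. First I would check that $s(\U(1))$ is a \emph{central}, closed, embedded Lie subgroup of $\widehat{\Aut}_{0}(P,\theta)_{\cG}$, so that the quotient is again a Fr\'echet--Lie group. Embeddedness is inherited from the preceding proposition together with the embedding $\widehat{\U(1)}_{\cG}\hookrightarrow\widehat{\Aut}_{0}(P,\theta)_{\cG}$ supplied by Corollary~\ref{cor:smoothfunctorGp}. For centrality I would argue that the commutator pairing $\widehat{\Aut}_{0}(P,\theta)_{\cG}\times s(\U(1))\rightarrow H^{1}(P,\U(1))$ descends to a bihomomorphism $\Aut_{0}(P,\theta)\times\U(1)\rightarrow H^{1}(P,\U(1))$, because $\U(1)\subseteq\Aut_{0}(P,\theta)$ is central and the extension by $H^{1}(P,\U(1))$ is central; since $\Aut_{0}(P,\theta)$ is connected while $\Hom(\U(1),H^{1}(P,\U(1)))\cong\Z^{2g}$ is discrete, this pairing must vanish. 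As $s(\U(1))$ is a splitting it meets the kernel $H^{1}(P,\U(1))$ only in the identity, and it maps isomorphically onto $\U(1)=\ker(\Aut_{0}(P,\theta)\rightarrow\Ham(\Sigma))$, so the quotient becomes a central extension
\[
	H^{1}(\Sigma,\U(1))\rightarrow\widehat{\Aut}_{0}(P,\theta)_{\cG}/s(\U(1))\rightarrow\Ham(\Sigma),
\]
where I identify $H^{1}(P,\U(1))\cong H^{1}(\Sigma,\U(1))$ via $\pi^{*}$, an isomorphism because $P$ generates the Picard group (Euler number $1$, so the Gysin sequence degenerates).

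Second, I would compute the associated Lie algebra extension. Passing to Lie algebras of a quotient by an embedded normal subgroup is exact, so the Lie algebra of the quotient is $\widehat{C^{\infty}}(\Sigma)_{\mu}/\mathrm{Lie}(s(\U(1)))$, i.e.\ the quotient of the versal extension~\eqref{eq:versal} by the central line $\mathrm{Lie}(s(\U(1)))=(\tfrac{1}{2\pi}\ol{\theta}+\pi^{*}[\alpha])\R$ identified in the discussion preceding the proposition. By~\eqref{eq:Owateenmooidiagram}---which stays commutative after replacing $\tfrac{1}{2\pi}\R\ol{\theta}$ by this line---the injective homomorphism $J$ realises $\widehat{C^{\infty}}(\Sigma)_{\mu}$ as the direct sum $J(\ol{\Omega}{}^{1}(\Sigma))\oplus(\tfrac{1}{2\pi}\ol{\theta}+\pi^{*}[\alpha])\R$ of Fr\'echet--Lie algebras. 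Hence the quotient map restricts to a Lie algebra isomorphism $\ol{\Omega}{}^{1}(\Sigma)\xrightarrow{\sim}\widehat{C^{\infty}}(\Sigma)_{\mu}/\mathrm{Lie}(s(\U(1)))$. Since $J$ carries $H^{1}_{\dR}(\Sigma)$ isomorphically onto the centre $H^{1}_{\dR}(P)$ (by the Gysin argument), which lies in $J(\ol{\Omega}{}^{1}(\Sigma))$ and is complementary to the line being quotiented out, this isomorphism sends $H^{1}_{\dR}(\Sigma)$ onto the centre of the quotient and induces the identity $\iota\colon C^{\infty}_{0}(\Sigma)=\X_{\ham}(\Sigma)$ on the base. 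The resulting extension is therefore exactly~\eqref{eq:UnivCE}, which is universal by \cite{JV15}.

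The diagram-chase of the second step is essentially forced by~\eqref{eq:Owateenmooidiagram}, so the genuine work will lie in the first step: confirming that $s(\U(1))$ is simultaneously embedded, closed and \emph{central}, so that the quotient honestly exists as a Fr\'echet--Lie group and is a central extension, and that the Lie functor is exact on this quotient. The hardest point will be the centrality argument---the vanishing of the commutator bihomomorphism into the discrete group $\Z^{2g}$---together with the bookkeeping that keeps the two identifications $H^{1}(P,\U(1))\cong H^{1}(\Sigma,\U(1))$ and $H^{1}_{\dR}(P)\cong H^{1}_{\dR}(\Sigma)$ mutually compatible across the group and Lie-algebra levels.
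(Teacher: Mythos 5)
Your proposal is correct and follows essentially the same route as the paper, which states the proposition without a separate proof and relies on the discussion immediately preceding it: the embedding of $s(\U(1))$ via Corollary~\ref{cor:smoothfunctorGp}, the identification of its Lie algebra with the line $\R\left(\tfrac{1}{2\pi}\ol{\theta}+\pi^*[\alpha]\right)$, and the observation that the split-exact diagram \eqref{eq:Owateenmooidiagram} remains commutative after replacing $\tfrac{1}{2\pi}\R\ol{\theta}$ by this line, from which the quotient Lie algebra is read off as the universal extension $\ol{\Omega}{}^{1}(\Sigma)$ of $\X_{\ham}(\Sigma)$. Your explicit verification that $s(\U(1))$ is central in $\widehat{\Aut}_{0}(P,\theta)_{\cG}$ --- via the vanishing of the commutator bihomomorphism into the discrete group $\Hom(\U(1),H^{1}(P,\U(1)))\cong\Z^{2g}$ --- is a point the paper passes over in silence, and it is a correct and worthwhile addition, since centrality is what makes the quotient an honest central extension of Fr\'echet--Lie groups.
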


\subsection{The topology of \texorpdfstring{$\Ham(\Sigma)$}{Ham(Sigma)}}

In order to apply the recognition theorem, we need to understand the fundamental group of $\Ham(\Sigma)$.
The first step is the following classical result by Smale, Earle--Eells and Gramain on the topology of diffeomorphism groups of closed 2-manifolds.

\begin{theorem}\label{Thm:TopologyDiffeoSurface}
Let $\Sigma$ be a closed, orientable surface.
The homotopy type of the connected diffeomorphism group $\Diff_{0}(\Sigma)$ is that of 
$\mathrm{SO}(3)$ for the sphere $\Sigma = S^2$, that of $T^2$ for the torus $\Sigma = T^2$, and trivial for all other closed, connected 
Riemann surfaces.
\end{theorem}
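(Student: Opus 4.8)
The plan is to realise $\Diff_0(\Sigma)$ as the structure group, or the total space, of a principal bundle over a contractible base, via the theory of complex structures and uniformisation, following the route of Smale, Earle--Eells and Gramain. First I would introduce the space $\mathcal{J}(\Sigma)$ of almost complex structures on $\Sigma$ compatible with the orientation. Since $\dim_{\R}\Sigma = 2$, the Nijenhuis tensor vanishes automatically, so every element of $\mathcal{J}(\Sigma)$ is integrable and $\mathcal{J}(\Sigma)$ is precisely the space of complex structures inducing the given orientation. This space is contractible: it is the space of smooth sections of a fibre bundle over $\Sigma$ whose typical fibre is the symmetric space $\mathrm{GL}^+(2,\R)/\mathrm{GL}(1,\C)\cong \mathbb{H}$, and a fibrewise geodesic contraction of this convex fibre contracts the whole section space. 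The group $\Diff_0(\Sigma)$ acts smoothly on $\mathcal{J}(\Sigma)$ by pullback.

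The analytic heart of the argument is the slice theorem for this action: it admits local slices, so the quotient map $\mathcal{J}(\Sigma)\to \mathcal{J}(\Sigma)/\Diff_0(\Sigma)$ is a locally trivial fibre bundle, and the quotient is the Teichm\"uller space $\mathcal{T}(\Sigma)$, which is contractible (homeomorphic to $\R^{6g-6}$ for $g\ge 2$, to $\mathbb{H}\cong\R^2$ for the torus, and to a point for the sphere). I would then identify the stabiliser of a fixed complex structure $J_0$ with the group $\Aut_0(\Sigma,J_0)$ of biholomorphisms of $(\Sigma,J_0)$ isotopic to the identity. For $\Sigma=S^2$ this is $\Aut(\mathbb{CP}^1)=\mathrm{PSL}(2,\C)$; for $\Sigma=T^2$ it is the translation group $\cong T^2$; and for $g\ge 2$ it is trivial, since a Riemann surface of genus $\ge 2$ has a finite automorphism group and the only automorphism isotopic to the identity is the identity.

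Finally I would assemble these into fibrations and run the long exact sequence of homotopy groups. For $g\ge 2$ the action is free, so $\Diff_0(\Sigma)\to \mathcal{J}(\Sigma)\to \mathcal{T}(\Sigma)$ is a principal bundle with contractible total space and contractible base; hence $\Diff_0(\Sigma)$ is weakly contractible, and therefore contractible, since it is a metrisable Fr\'echet--Lie group, hence an absolute neighbourhood retract with the homotopy type of a CW complex, so Whitehead's theorem applies. For $\Sigma=S^2$ the orbit of $J_0$ is all of $\mathcal{J}(S^2)$ (Teichm\"uller space is a point), giving a principal bundle $\mathrm{PSL}(2,\C)\to \Diff_0(S^2)\to \mathcal{J}(S^2)$ over a contractible base, so $\Diff_0(S^2)\simeq \mathrm{PSL}(2,\C)$, which deformation retracts onto its maximal compact subgroup $\mathrm{PSU}(2)=\mathrm{SO}(3)$. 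For $\Sigma=T^2$, the orbit $\mathcal{O}=\Diff_0(T^2)/T^2$ is the fibre of the bundle $\mathcal{J}(T^2)\to \mathcal{T}(T^2)$ with contractible total space and base, hence weakly contractible; the resulting bundle $T^2\to \Diff_0(T^2)\to \mathcal{O}$ then shows that the inclusion of the translation subgroup $T^2\hookrightarrow \Diff_0(T^2)$ induces isomorphisms on all homotopy groups, so $\Diff_0(T^2)\simeq T^2$.

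The main obstacle is the slice theorem: establishing, in the Fr\'echet (or Sobolev-completed) category, that the $\Diff_0(\Sigma)$-action on $\mathcal{J}(\Sigma)$ has local slices, so that the quotient is a genuine fibre bundle over the contractible Teichm\"uller space. This rests on elliptic theory and an implicit-function-theorem argument, and is exactly the technical content supplied by Earle--Eells and Gramain; the contractibility of $\mathcal{J}(\Sigma)$ is elementary, while the contractibility of $\mathcal{T}(\Sigma)$ is the classical Teichm\"uller theorem.
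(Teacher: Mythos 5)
Your proposal is correct and follows essentially the same route as the paper: the paper's proof of this theorem consists entirely of citations to Smale, Earle--Eells and Gramain, and your sketch is precisely the Earle--Eells argument (contractibility of the space $\mathcal{J}(\Sigma)$ of compatible complex structures, the slice theorem making $\mathcal{J}(\Sigma)\to\mathcal{T}(\Sigma)$ a locally trivial bundle over contractible Teichm\"uller space, identification of the stabilisers $\mathrm{PSL}(2,\C)$, $T^2$, $\{\mathrm{id}\}$, and the long exact sequence plus ANR/Whitehead upgrade) that those citations point to. The one genuinely technical ingredient, the slice theorem in the Fr\'echet or Sobolev-completed category, you correctly flag and defer to the same sources, so nothing is missing beyond what the paper itself leaves to the literature.
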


\begin{proof}
The case of the sphere is a classical result of Smale \cite[Theorem~A]{Smale1959}, the case of the torus and the case of genus $>1$
was obtained by Earle and Eells using analytic methods \cite[p.~21]{EarleEells1969}. An alternative proof using purely differential topological methods
was later found by Gramain \cite[Th\'eor\`eme~1]{Gramain1973}.
\end{proof}

We consider the flux homomorphism $\mathrm{Flux}_{\sigma}: \Diff(\Sigma, \sigma)_{0} \rightarrow J^{1}(\Sigma),\phi \mapsto \phi^{*}h - h$, where $h \in \widehat{H}^{1}(\Sigma,\U(1))$ is an arbitrary differential character with curvature $\sigma$ (see \cite[Section 3]{DJNV21}).
We then let $\Ham(\Sigma)_{+}$ be the kernel of $\mathrm{Flux}_{\sigma}$, and observe that it is not necessarily connected.
Since $\Ham(\Sigma)$ is the identity component of $\Ham(\Sigma)_{+}$, it has the same homotopy groups in every degree except zero.

\begin{corollary}\label{Cor:HomotopyType}
Let $S^2$ be the sphere, $T^2$ the torus, and $\Sigma_{g}$ a closed, orientable surface of genus $g>1$.
The homotopy type of $\Ham(S^2)_{+}$ is that of $\mathrm{SO}(3)$, the homotopy type of $\Ham(T^2)_{+}$ is trivial, and the homotopy type of $\Ham(\Sigma_g)_{+}$ is that of $\Z^{2g}$.
\end{corollary}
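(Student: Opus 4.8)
The plan is to combine the known homotopy type of the full symplectomorphism group with the long exact sequence of homotopy groups coming from the flux map, viewed as a fibration with fibre $\Ham(\Sigma)_{+}$.

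First I would identify the homotopy type of $\Diff(\Sigma,\sigma)_{0}$. By Moser's theorem the orbit map $\phi \mapsto \phi^{*}\sigma$ realises $\Diff(\Sigma)$ as the total space of a fibre bundle with fibre $\Diff(\Sigma,\sigma)$ over the space of area forms on $\Sigma$ of total area $\int_{\Sigma}\sigma$; this space is convex, hence contractible, so passing to identity components the inclusion $\Diff(\Sigma,\sigma)_{0}\hookrightarrow \Diff_{0}(\Sigma)$ is a (weak) homotopy equivalence. Combined with Theorem~\ref{Thm:TopologyDiffeoSurface}, this shows that $\Diff(\Sigma,\sigma)_{0}$ has the homotopy type of $\mathrm{SO}(3)$ for $\Sigma = S^{2}$, of $T^{2}$ for $\Sigma = T^{2}$, and is contractible for $\Sigma = \Sigma_{g}$ with $g>1$.

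Next I would set up the flux fibration. The infinitesimal flux $X \mapsto [i_{X}\sigma]$ is a continuous linear map $\X(\Sigma,\sigma) \rightarrow H^{1}_{\dR}(\Sigma)$ which is surjective (given a closed $1$-form $a$, nondegeneracy of $\sigma$ produces $X$ with $i_{X}\sigma = a$) and split, since $H^{1}_{\dR}(\Sigma)$ is finite-dimensional. Hence $\mathrm{Flux}_{\sigma}\colon \Diff(\Sigma,\sigma)_{0}\rightarrow J^{1}(\Sigma)$ is a submersive homomorphism of Fr\'echet--Lie groups; its image is an open subgroup of the connected group $J^{1}(\Sigma)$ and therefore equals $J^{1}(\Sigma)$, and it admits smooth local sections. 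Consequently $\mathrm{Flux}_{\sigma}$ is a locally trivial bundle with fibre $\ker \mathrm{Flux}_{\sigma} = \Ham(\Sigma)_{+}$, so it is a Serre fibration and yields the long exact sequence
\begin{equation*}
	\cdots \rightarrow \pi_{n}(\Ham(\Sigma)_{+}) \rightarrow \pi_{n}(\Diff(\Sigma,\sigma)_{0}) \xrightarrow{(\mathrm{Flux}_{\sigma})_{*}} \pi_{n}(J^{1}(\Sigma)) \xrightarrow{\partial} \pi_{n-1}(\Ham(\Sigma)_{+}) \rightarrow \cdots.
\end{equation*}
Here $J^{1}(\Sigma)$ is a torus of dimension $b_{1}(\Sigma) = 2g$, so $\pi_{1}(J^{1}(\Sigma)) = H^{1}(\Sigma,\Z) \cong \Z^{2g}$ and $\pi_{n}(J^{1}(\Sigma)) = 0$ for $n \geq 2$.

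Finally I would run the three cases. For $\Sigma = S^{2}$ we have $b_{1} = 0$, so $J^{1}(S^{2})$ is a point and $\Ham(S^{2})_{+} = \Diff(S^{2},\sigma)_{0} \simeq \mathrm{SO}(3)$. For $\Sigma = \Sigma_{g}$ with $g>1$, the group $\Diff(\Sigma,\sigma)_{0}$ is contractible, so $\partial\colon \pi_{n}(J^{1}(\Sigma)) \xrightarrow{\sim} \pi_{n-1}(\Ham(\Sigma)_{+})$ is an isomorphism for all $n\geq 1$; this gives $\pi_{0}(\Ham(\Sigma_{g})_{+}) \cong \pi_{1}(J^{1}(\Sigma)) \cong \Z^{2g}$ and $\pi_{n}(\Ham(\Sigma_{g})_{+}) = 0$ for $n \geq 1$, i.e.\ the homotopy type of the discrete set $\Z^{2g}$. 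For $\Sigma = T^{2}$, the equivalence $\Diff(T^{2},\sigma)_{0}\simeq T^{2}$ is realised by the subgroup of translations, whose two generating loops have flux equal to a basis of the lattice $H^{1}(T^{2},\Z) = \pi_{1}(J^{1}(T^{2}))$; hence $(\mathrm{Flux}_{\sigma})_{*}\colon \pi_{1}(\Diff(T^{2},\sigma)_{0}) \cong \Z^{2} \rightarrow \pi_{1}(J^{1}(T^{2})) \cong \Z^{2}$ is an isomorphism, and exactness then forces $\pi_{n}(\Ham(T^{2})_{+}) = 0$ for all $n \geq 0$, so $\Ham(T^{2})_{+}$ is contractible. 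The main obstacle I anticipate is the torus computation, namely verifying that the flux of the translation loops exhausts the full integral lattice (equivalently that the flux group of $T^{2}$ is all of $H^{1}(T^{2},\Z)$), since this is exactly what makes $(\mathrm{Flux}_{\sigma})_{*}$ an isomorphism on $\pi_{1}$ and collapses $\Ham(T^{2})_{+}$; a secondary technical point is justifying that $\mathrm{Flux}_{\sigma}$ is a genuine fibration in the Fr\'echet setting, which rests on the smooth local sections provided by the finite-dimensionality of $H^{1}_{\dR}(\Sigma)$.
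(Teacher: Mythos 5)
Your proposal is correct and follows essentially the same route as the paper's proof: identify the homotopy type of $\Diff(\Sigma,\sigma)_{0}$ with that of $\Diff_{0}(\Sigma)$ via the contractible space of area forms, then run the long exact homotopy sequence of the flux fibration $\Ham(\Sigma)_{+} \rightarrow \Diff(\Sigma,\sigma)_{0} \rightarrow J^{1}(\Sigma)$ against Theorem~\ref{Thm:TopologyDiffeoSurface}. Even your treatment of the torus case (the fluxes of the translation loops generate the full lattice, so $(\mathrm{Flux}_{\sigma})_{*}$ is an isomorphism on $\pi_{1}$) is just the loop-level formulation of the paper's observation that the translation subgroup splits the flux sequence.
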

\begin{proof}
By \cite[Theorem 2.5.3]{Hamilton82}, the Fr\'echet--Lie group $\Diff(\Sigma)$ acts smoothly and transitively on the convex space $\mathcal{M}_V(\Sigma)$ of smooth positive measures of total measure $V = \int_{\Sigma}\sigma$, and the orbit map $\phi \mapsto \phi \cdot \sigma$ makes $\Diff(\Sigma) \rightarrow \mathcal{M}_V(\Sigma)$ into a smooth principal $\Diff(\Sigma,\sigma)$-bundle.
As $\mathcal{M}_V(\Sigma)$ is contractible, the homotopy type of $\Diff(\Sigma,\sigma)$ is the same as that of $\Diff(\Sigma)$.

Recall that $\Ham(\Sigma)_{+}$ is the kernel of the flux homomorphism. The flux homomorphism is surjective onto the Jacobian torus $J^1(\Sigma) = H^1(\Sigma, \R)/H^1(\Sigma, \R)_{\Z}$ because it is surjective at the Lie algebra level, so we have an exact sequence 
\begin{equation}\label{eq:HamDiffJac}
	1\rightarrow \Ham(\Sigma)_{+} \rightarrow \Diff(\Sigma,\sigma)_0 \rightarrow J^1(\Sigma) \rightarrow 1
\end{equation}
of Fr\'echet--Lie groups.
Since $J^1(\Sigma) \simeq T^{2g}$ for a surface of genus $g$, we have $\pi_1(J^1(\Sigma)) = \Z^{2g}$ and all other homotopy groups vanish.
The long exact sequence of homotopy groups for \eqref{eq:HamDiffJac} therefore yields 
$\pi_i(\Ham(\Sigma)_{+}) \simeq \pi_i(\Diff(\Sigma,\sigma)_{0})$ for $i>1$, and 
\[
1 \rightarrow \pi_1(\Ham(\Sigma)_{+}) \rightarrow \pi_1(\Diff(\Sigma,\sigma)_{0}) \rightarrow \Z^{2g} \rightarrow \pi_0(\Ham(\Sigma)_{+}) \rightarrow 1
\]
for the fundamental groups. 
Combined with Theorem~\ref{Thm:TopologyDiffeoSurface}, this yields the required information.
For $\Sigma = S^2$, we conclude that the homotopy type of $\Ham(S^2)_{+}$ is that of $\mathrm{SO}(3)$.
For $\Sigma = \Sigma_g$ of genus $g>1$, we conclude that $\pi_0(\Ham(\Sigma)_{+}) \simeq \Z^{2g}$, and that 
$\pi_i(\Ham(\Sigma)_{+})$ is trivial for $i\geq 1$.
Finally, for the 2-torus $\Sigma = T^2$, we have $\Diff_{0}(T^2, \sigma) = \mathrm{Ham}(T^2)_{+} \times T^2$ as a product of Lie groups, and the flux homomorphism 
is the projection onto the second factor. Since the exact sequence for the flux homomorphism splits, we find $\pi_0(\Ham(T^2)_{+}) = 0$, $\pi_1(\Ham(T^2)) = 0$, 
and $\pi_i(\Ham(T^2)) = 0$ for $i>1$ because this is so for $\Diff_0(T^2)$.
\end{proof}

For $\Sigma = S^2$, the Lie algebra $\X_{\ham}(S^2)$ of $\Ham(S^2)$ is centrally closed, as 
the continuous second Lie algebra cohomology $H^2(\X_{\ham}(S^2), \R) \simeq H^1_{\dR}(S^2)$ 
vanishes \cite{JV15}. In this case, the universal central extension is 
just the simply connected cover, with centre $\pi_1(\Ham(S^2)) = \Z/2\Z$.

\subsection{The universal central extension of \texorpdfstring{$\Ham(\Sigma)$}{Ham(Sigma)}}

For a closed surface $\Sigma$ of genus $g\geq 1$, let $G :=\widehat{\Aut}_{0}(P)/\sigma(U(1))$
be the central extension of $\Ham(\Sigma)$ constructed in
Proposition~\ref{Prop:ConstructieExtensie}, and let $\widetilde{G}$ be its simply connected cover.
Since $\Ham(\Sigma)$ is simply connected,
the kernel of the lifted map $\widetilde{G} \rightarrow \Ham(\Sigma)$ is central in $G$.
The central extension 
\[
1 \rightarrow H^1(\Sigma, \U(1))\rightarrow G \rightarrow  \Ham(\Sigma) \rightarrow 1
\]
therefore gives rise to an extension of Fr\'echet--Lie groups
\[
1 \rightarrow H^1(\Sigma, \R)\rightarrow \widetilde{G} \rightarrow  \Ham(\Sigma) \rightarrow 1
\]
which is still central, and which corresponds to the same Lie algebra extension.
As $\pi_1(\widetilde{G})$ and $\pi_1(\Ham(\Sigma))$ are trivial, and since the Lie algebra extension is universal by \cite[Theorem 5.6]{JV15}, 
we conclude from the Recognition Theorem~\ref{RecognitionTheorem} that $\widetilde{G} \rightarrow \Ham(\Sigma)$ is universal. 
\begin{theorem}
For $\Sigma = S^2$, the universal central extension of $\Ham(\Sigma)$ is the simply connected cover 
\[
1 \rightarrow \Z/2\Z \rightarrow \widetilde{\Ham}(S^2) \rightarrow \Ham(S^2) \rightarrow 1.
\]
For a 2-dimensional, closed, orientable surface $\Sigma$ of genus $g\geq 1$, the central extension 
\[
1 \rightarrow H^1(\Sigma, \R)\rightarrow \widetilde{G} \rightarrow  \Ham(\Sigma) \rightarrow 1
\]
is universal for regular abelian Lie groups modelled on locally convex spaces. The corresponding Lie algebra extension is 
$ 0 \rightarrow H^1_{\dR}(\Sigma)\rightarrow \ol{\Omega}{}^1(\Sigma) \rightarrow \X_{\ham}(\Sigma) \rightarrow 0$.
\end{theorem}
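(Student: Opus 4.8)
The plan is to treat the two cases separately, invoking in each the Recognition Theorem~\ref{RecognitionTheorem}. For $\Sigma = S^2$ the Lie algebra $\X_{\ham}(S^2) \simeq C^\infty_0(S^2)$ is perfect and, moreover, \emph{centrally closed}: its continuous second cohomology $H^2(\X_{\ham}(S^2),\R) \simeq H^1_{\dR}(S^2)$ vanishes \cite{JV15}. Hence the universal central extension at the Lie-algebra level is trivial, and every central extension of $\Ham(S^2)$ is forced to be a discrete cover. I would therefore take $\widehat{G}$ to be the simply connected cover $\widetilde{\Ham}(S^2)$, whose kernel is $\pi_1(\Ham(S^2)) = \Z/2\Z$ by Corollary~\ref{Cor:HomotopyType} (since $\Ham(S^2)_{+} \simeq \mathrm{SO}(3)$). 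The three hypotheses of Theorem~\ref{RecognitionTheorem} then read: (1) perfectness together with the trivial universal central extension; (2) simple connectedness of $\widehat{G}$, by construction; and (3) that $\pi_1(\Ham(S^2))$ lie in the commutator subgroup of the simply connected cover, a consequence of perfectness of $\X_{\ham}(S^2)$. This yields the first assertion.

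For genus $g \geq 1$ I would begin with the central extension
\[
1 \to H^1(\Sigma,\U(1)) \to G \to \Ham(\Sigma) \to 1
\]
of Proposition~\ref{Prop:ConstructieExtensie}, where $G = \widehat{\Aut}_{0}(P)/s(\U(1))$ and the induced Lie-algebra extension is \emph{already} the universal one, $H^1_{\dR}(\Sigma) \to \ol{\Omega}{}^1(\Sigma) \to \X_{\ham}(\Sigma)$. The essential point is that the centre $H^1(\Sigma,\U(1)) \simeq \U(1)^{2g}$ is a torus, hence not simply connected; the idea is to pass to the simply connected cover $\widetilde{G} \to G$, which unwinds this torus into its universal cover $H^1(\Sigma,\R) \simeq \R^{2g}$. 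Concretely, because $\Ham(\Sigma)$ is simply connected and $\pi_2(\Ham(\Sigma))$ vanishes for $g\geq 1$ (Corollary~\ref{Cor:HomotopyType}), the long exact homotopy sequence of the fibration identifies the inclusion-induced map $\pi_1(H^1(\Sigma,\U(1))) \to \pi_1(G)$ as an isomorphism $\Z^{2g} \xrightarrow{\sim} \pi_1(G)$. Consequently the restriction of the universal cover over the centre is exactly the universal cover of the centre, so the preimage of $H^1(\Sigma,\U(1))$ in $\widetilde{G}$ is the connected group $H^1(\Sigma,\R)$.

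I would then record that the resulting sequence
\[
1 \to H^1(\Sigma,\R) \to \widetilde{G} \to \Ham(\Sigma) \to 1
\]
is again central --- centrality descends along the covering, since the commutator of any preimage of a central element is valued in the discrete kernel and therefore trivial on the connected group $\widetilde{G}$ --- and that it induces the same Lie-algebra extension, as neither passing to a cover nor replacing $\U(1)^{2g}$ by $\R^{2g}$ alters Lie algebras. The three conditions of Theorem~\ref{RecognitionTheorem} are now immediate: (1) $\X_{\ham}(\Sigma) \simeq C^\infty_0(\Sigma)$ is perfect \cite{ALDM74} and $\ol{\Omega}{}^1(\Sigma)$ is its universal central extension by \cite[Theorem 5.6]{JV15}; (2) $\widetilde{G}$ is simply connected by construction; and (3) is vacuous, since $\pi_1(\Ham(\Sigma)) = 0$ for $g \geq 1$ (Corollary~\ref{Cor:HomotopyType}). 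The Recognition Theorem then certifies universality among central extensions by regular abelian Lie groups modelled on complete locally convex spaces, and the Lie-algebra extension is $\ol{\Omega}{}^1(\Sigma) \to \X_{\ham}(\Sigma)$ by construction.

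The main obstacle I anticipate is the bookkeeping in the passage from $G$ to $\widetilde{G}$: one must be sure that the centre unwinds to \emph{all} of $H^1(\Sigma,\R)$ rather than to some proper intermediate cover, that centrality survives, and that the Lie-algebra extension is genuinely unchanged. Each of these rests on the vanishing of $\pi_1$ and $\pi_2$ of $\Ham(\Sigma)$, which is exactly the topological input furnished by Corollary~\ref{Cor:HomotopyType} (in turn built on the Smale--Earle--Eells--Gramain Theorem~\ref{Thm:TopologyDiffeoSurface}). Once this topological input is correctly marshalled, the remaining steps are formal applications of the Recognition Theorem and of the universality of $\ol{\Omega}{}^1(\Sigma)$.
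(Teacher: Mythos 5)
Your overall strategy coincides with the paper's: both cases are reduced to the Recognition Theorem~\ref{RecognitionTheorem}, the genus $g\geq 1$ extension is obtained as the simply connected cover $\widetilde{G}$ of $G = \widehat{\Aut}_{0}(P)/s(\U(1))$ from Proposition~\ref{Prop:ConstructieExtensie}, and the sphere case is handled via central closedness of $\X_{\ham}(S^2)$. For $g \geq 1$ your write-up is in fact more detailed than the paper's: the identification of the kernel of $\widetilde{G} \to \Ham(\Sigma)$ with all of $H^1(\Sigma,\R)$ via the long exact homotopy sequence (using $\pi_1 = \pi_2 = 0$ for $\Ham(\Sigma)$), and the commutator-into-discrete-kernel argument for centrality, are exactly the points the paper leaves implicit, and your treatment of them is correct.

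There is, however, one step that does not hold as you state it: in the $S^2$ case you justify hypothesis (3) of Theorem~\ref{RecognitionTheorem} --- that $\pi_1(\Ham(S^2)) = \Z/2\Z$ lies in the commutator subgroup of $\widetilde{\Ham}(S^2)$ --- as ``a consequence of perfectness of $\X_{\ham}(S^2)$.'' In the Fr\'echet setting, perfectness of the Lie algebra does not imply that a connected Lie group (let alone its simply connected cover) equals its abstract commutator subgroup; the finite-dimensional argument, which identifies the commutator subgroup as the integral subgroup of $[\fg,\fg]$, is unavailable, and even density of the commutator subgroup would not force it to contain a prescribed element of the discrete central subgroup $\pi_1$. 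A clean repair uses the topological input you already have: by Corollary~\ref{Cor:HomotopyType} (Smale), the rotation subgroup $\mathrm{SO}(3) \subseteq \Ham(S^2)$ induces an isomorphism on $\pi_1$, so its preimage in $\widetilde{\Ham}(S^2)$ is the connected double cover $\mathrm{SU}(2)$, and the nontrivial element of $\pi_1(\Ham(S^2))$ is the central element $-I$ of this $\mathrm{SU}(2)$; since $\mathrm{SU}(2)$ is perfect, that element is a product of commutators in $\widetilde{\Ham}(S^2)$, which is exactly condition (3). (To be fair, the paper's own one-sentence treatment of $S^2$ does not verify this condition either; but your phrasing asserts an implication that is false in general, so it should be replaced by an argument of this kind.)
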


\end{document}